\theoremstyle{plain}
\newtheorem{lemma}{Lemma}
\newtheorem{theorem}[lemma]{Theorem}
\newtheorem{definition}[lemma]{Definition}
\theoremstyle{remark}
\newtheorem{remark}{Remark}
\newtheorem{case}{Case}
\newcommand {\eps}{\varepsilon}
\newcommand  {\N}{\mathbb{N}}
\newcommand  {\R}{\mathbb{R}}
\newcommand  {\I}{\mathbb{I}}
\renewcommand{\d}{\mathrm{d}}
\newcommand  {\e}{\mathrm{e}}
\newcommand  {\Ic}{\mathcal{I}}
\newcommand  {\Jc}{\mathcal{J}}
\newcommand  {\HK}{K}
\newcommand{\erfc}{\operatorname{erfc}}
\newcommand{\erf}{\operatorname{erf}}
\renewcommand{\l}{{\operatorname{l}}}
\renewcommand{\r}{{\operatorname{r}}}
\newcommand{\full}{{\operatorname{full}}}
\newcommand{\loc}{{\operatorname{loc}}}
\DeclareMathOperator*{\esssup}{ess\,sup}
\DeclareMathOperator*{\essinf}{ess\,inf}
\DeclareMathOperator*{\supp}{supp}
\newcommand*{\abs}[1]{\lvert #1 \rvert}
\newcommand{\NewVersionOff}[1]{}
\begin{document}
\title[Long-time asymptotics of solutions to the Keller--Rubinow model]%
{Long-time asymptotics of solutions to the Keller--Rubinow model
  for Liesegang rings \\ in the fast reaction limit}

\author[Z. Darbenas]{Zymantas Darbenas}
\author[R. v.~d.~Hout]{Rein van der Hout}
\author[M. Oliver]{Marcel Oliver}
\email[M. Oliver]{m.oliver@jacobs-university.de}

\address[Z. Darbenas and M. Oliver]%
{School of Engineering and Science \\
 Jacobs University \\
 28759 Bremen \\
 Germany}

\address[R. v.~d.~Hout]%
{Dunolaan 39 \\
6869VB Heveadorp \\
The Netherlands}

\date{\today}
\keywords{Liesegang rings, reaction-diffusion equation, relay
hysteresis, stable equilibrium}

\begin{abstract}
We consider the Keller--Rubinow model for Liesegang rings in one
spatial dimension in the fast reaction limit as introduced by
Hilhorst, van der Hout, Mimura, and Ohnishi in 2007.  Numerical
evidence suggests that solutions to this model converge, 
independent of the initial concentration, to a universal profile for large times
in parabolic similarity coordinates.  For the concentration function,
the notion of convergence appears to be similar to attraction to a
stable equilibrium point in phase space.  The reaction term, however,
is discontinuous so that it can only convergence in a much weaker,
averaged sense.  This also means that most of the traditional
analytical tools for studying the long-time behavior fail on this
problem.

In this paper, we identify the candidate limit profile as the solution
of a certain one-dimensional boundary value problem which can be
solved explicitly.  We distinguish two nontrivial regimes.  In the
first, the \emph{transitional regime}, precipitation is restricted to
a bounded region in space.  We prove that the concentration converges
to a single asymptotic profile.  In the second, the
\emph{supercritical regime}, we show that the concentration converges
to one of a one-parameter family of asymptotic profiles, selected by a
solvability condition for the one-dimensional boundary value problem.
Here, our convergence result is only conditional: we prove that if
convergence happens, either pointwise for the concentration or in an
averaged sense for the precipitation function, then the other field
converges likewise; convergence in concentration is uniform, and the
asymptotic profile is indeed the profile selected by the solvability
condition.  A careful numerical study suggests that the actual
behavior of the equation is indeed the one suggested by the theorem.
\end{abstract}

\maketitle

\tableofcontents

\section{Introduction}

Liesegang rings appear as regular patterns in many chemical
precipitation reactions.  Their discovery is usually attributed to the
German chemist Raphael Liesegang who, in 1896, observed the emergence
of concentric rings of silver dichromate precipitate in a gel of
potassium dichromate when seeded with a drop of silver nitrate
solution.  Related precipitation patterns were in fact observed even
earlier, see \cite{Henisch:1988:CrystalsGL} for a historical
perspective.

{From} the modeling perspective, there are two competing points of
view.  One is a ``post-nucleation'' approach in which the patterns
emerge via competitive growth of precipitation germs
\cite{Smith:1984:OstwaldST}, the other a ``pre-nucleation'' approach,
a sophisticated modification of the ``post-nucleation'' approach,
suggested by Keller and Rubinow \cite{KellerR:1981:RecurrentPL} which
is the starting point of the present work.  The recent
survey~\cite{DuleyFM:2017:KellerRM} gives a comprehensive summary of
the most important published research on both approaches, including
numerical and theoretical comparisons.  A direct and detailed
comparison between the two theories and the history behind can be
found in~\cite{KrugB:1999:MorphologicalCL}.

The Keller--Rubinow model is based on the chain of chemical reactions
\begin{gather*}
  A + B \to C \to D
\end{gather*}
with associated reaction-diffusion equations
\begin{subequations}
\begin{gather}
  a_t = \nu_a \, \Delta a - k \, a \, b \,, \\
  b_t = \nu_b \, \Delta b - k \, a \, b \,, \\
  c_t = \nu_c \, \Delta c + k \, a \, b - P(c,d) \,, \\
  d_t = P(c,d) \,,
\end{gather}
\end{subequations}
where the rate of the precipitation reaction is described by the
function
\begin{gather}
  P(c,d) =
  \begin{cases}
    0 & \text{if } d=0 \text{ and } c<c^\top \,, \\
    \lambda \, (c-c^\bot)_+ &
    \text{if } d>0 \text{ or } c \geq c^\top \,.
  \end{cases} 
\end{gather}
Without loss of generality, we may assume that the precipitation rate
constant $\lambda=1$; this choice is assumed in the remainder of the
paper.  The precipitation function $P$ expresses that precipitation
starts only once the concentration $c$ exceeds a supersaturation
threshold $c^\top$ and continues for as long as $c$ exceeds the
saturation threshold $c_\bot$.

Using
\cite{HilhorstHP:1996:FastRL,HilhorstHP:1997:DiffusionPF,HilhorstHP:2000:NonlinearDP},
Hilhorst \emph{et al.}\
\cite{HilhorstHM:2007:FastRL,HilhorstHM:2009:MathematicalSO} studied
the case where $\nu_b=0$, $c_\bot=0$, and the ``fast reaction limit''
where $k \to \infty$.  To simplify matters, they took as the spatial
domain the positive half-axis.  This is precisely the setting we shall
consider in our work and which we refer to as the HHMO-model.  Writing
$u$ in place of $c$ and choosing dimensions in which $\nu_c=1$, we can
state the model as
\begin{subequations}
  \label{e.original}
\begin{gather}
  u_t = u_{xx} +
        \frac{\alpha \beta}{2 \sqrt t} \, \delta (x - \alpha \sqrt{t})
        - p[x,t;u] \, u \,,
  \label{e.original.a} \\
  u_x(0,t) = 0 \quad \text{for } t \geq 0 \,, \label{e.original.b} \\
  u(x,0) = 0 \quad \text{for } x>0 \label{e.original.c}
\end{gather}
\end{subequations}
where the precipitation function $p[x,t;u]$ depends on $x$, $t$, and
nonlocally on $u$ via
\begin{equation}
  p[x,t;u] = H
  \biggl(
    \int_0^t (u(x,\tau) - u^*)_+ \, \d \tau
  \biggr) \,.
  \label{e.hhmo-p}
\end{equation}
Here, $H$ denotes the Heaviside function with the convention that
$H(0)=0$ and $u^*$ denotes the supersaturation concentration.

Hilhorst \emph{et al.}\ \cite{HilhorstHM:2007:FastRL} further
introduce the notion of a weak solution to \eqref{e.original}.  Modulo
technical details, their approach is to seek pairs $(u,p)$ that
satisfy \eqref{e.original.a} integrated against a suitable test
function such that
\begin{equation}
\label{e.hhmo-p-weak}
  p(x,t) \in H
  \biggl(
    \int_0^t (u(x,\tau) - u^*)_+ \, \d \tau
  \biggr)
\end{equation}
where $H$ now denotes the Heaviside graph
\begin{equation}
\label{p.H.def}
  H(y) \in
  \begin{cases}
    0 & \text{when } y<0 \,, \\
    [0,1] & \text{when }y=0 \,, \\
    1 & \text{when } y>0 \,.
  \end{cases}
\end{equation}
Additionally, they require that $p(x,t)$ takes the value $0$ whenever
$u(x,s)$ is strictly less than the threshold $u^*$ for all
$s\in[0,t]$.  This can be stated as
\begin{equation}
\label{e.hhmo-p-weak-alternative}
  p(x,t)\in
  \begin{cases}
     0&\text{ if }\sup_{s\in[0,t]}u(x,s)<u^* \,,\\
     [0,1]&\text{ if }\sup_{s\in[0,t]}u(x,s)=u^* \,,\\
     1 &\text{ if }\sup_{s\in[0,t]}u(x,s)>u^* \,.
  \end{cases}
\end{equation}
The question of uniqueness of weak solutions is open in general.
However, in \cite{DarbenasHO:2020:ConditionalUS} we prove short-time
uniqueness and show that solutions remain unique so long as a certain
transversality condition is satisfied.  Further,
\cite{HilhorstHM:2009:MathematicalSO} pose the question whether the
precipitation function $p$ is binary.  Rigorous results on a
simplified system as well as numerics indicate that solutions with a
binary precipitation function only exist over a finite interval of
time \cite{Darbenas:2018:PhDThesis, DarbenasO:2019:UniquenessSW,
DarbenasO:2021:BreakdownLP}.

In this paper, we provide evidence that the long-time behavior of
solutions to the HHMO-model is determined by an asymptotic profile
that depends only on the parameters of the equation.  Heuristically,
the mechanism of convergence is the following: as soon as the
concentration exceeds the precipitation threshold $u^*$, the reaction
ignites and reduces the reactant concentration.  A continuing reaction
burns up enough fuel in its neighborhood to eventually pull the
concentration below the threshold everywhere, so the reaction region
cannot grow further.  Eventually, the source location will move
sufficiently far from the active reaction regions that the
concentration grows again and the reaction threshold may be surpassed
again.  As the source loses strength with time, the amplitudes of the
concentration change around the source will decrease with time,
getting ever closer to the critical concentration.  In fact, both
numerical studies and analytical results on a simplified model suggest
that convergence of concentration to the critical value happens within
a bounded region of space-time \cite{DarbenasO:2021:BreakdownLP}, so
that process of equilibrization is much more rapid than the typical
approach to a stable equilibrium point in a smooth dynamical system.

In $x$-$t$ coordinates, the source point is moving.  To analyze the
time-asymptotic behavior, we must therefore change into parabolic
similarity coordinates, here defined as $\eta=x/\sqrt{t}$ and
$s=\sqrt{t}$.  We further write $u(x,t) = v(x/\sqrt{t},\sqrt{t})$ and
$p[x,t;u] = q[\eta,s;v]$ to make transparent which coordinate system
is used at any point in the paper.  In similarity coordinates, the
$\delta$-source in \eqref{e.original} is stationary at $\eta=\alpha$
but decreases in strength as time progresses.  In what follows, we
look for asymptotic profiles where
\begin{equation}
  \lim_{s \to \infty} v(\alpha, s) = u^* \,.
  \label{e.valphalimit}
\end{equation}
In the classical setting of smooth dynamical systems, the limit
function would correspond to a stable equilibrium of the system in
$\eta$-$s$ coordinates.  Here, stationarity is incompatible with the
ignition condition \eqref{e.hhmo-p-weak-alternative}.  We thus impose
that $p$ takes a form such that the precipitation term loses its
$s$-dependence.  This requirement can only be satisfied when
$p(x,t) = \gamma \, x^{-2} \, H(\alpha^2t-x^2)$ for some non-negative
constant $\gamma$, so the self-similar precipitation function takes
values outside of $[0,1]$, in fact, it is not even bounded.
Nonetheless, for each $\gamma \geq 0$, we can solve the stationary
problem to obtain a profile $\Phi_\gamma$, which, subject to suitable
conditions, is uniquely determined by the condition that
$\Phi_\gamma(\alpha) = u^*$ so the profile is consistent with the
conjectured limit \eqref{e.valphalimit}.  Now the following picture
emerges.

With varying source strength (in the following, we will actually think
of varying $u^*$ for given values of $\alpha$ and $\beta$), there are
three distinct open regimes.  When the source is insufficient to
ignite the reaction at all (``subcritical regime''), the dynamics
remains trivial.  When the source strength is larger but not very
large (``transitional regime''), some reaction will be triggered
initially, but eventually diffusion overwhelms the source so that no
further ignition occurs.  The scenario of asymptotic equilibrization
cannot be maintained so that \eqref{e.valphalimit} does not hold true.
We find that solutions anywhere in the transitional regime will
converge to a universal profile $\Phi_0$.  When the source strength is
large enough so that continuing re-ignition is always possible
(``supercritical regime''), we identify a one-parameter family of
profiles $\Phi_\gamma$ which determine the long-time asymptotics of
the concentration; in particular, \eqref{e.valphalimit} holds true.

Throughout the paper, we use the following notion of convergence.  For
the concentration, we look at uniform convergence in $\eta$-$s$
coordinates, i.e.,
\begin{equation}
  \lim_{s \to \infty} \sup_{\eta \geq 0} \,
    \lvert v(\eta,s) - \Phi_\gamma(\eta) \rvert
  = \lim_{t \to \infty} \sup_{\eta \geq 0} \,
    \lvert u(\eta \sqrt t, t) - \Phi_\gamma(\eta) \rvert
  = 0 \,.
  \label{e.uniform}
\end{equation}
For brevity, we shall say that \emph{$u$ converges uniformly to
$\Phi_\gamma$}; the sense of convergence is always understood as
defined here.

For the precipitation function, the notion of convergence is more
subtle.  For our main results, we work with precipitation functions
that satisfy the following condition:
\begin{itemize}
\item [(P)] There exists a measurable function $p^*$ such that for
a.e.\ $x \in \R_+$,
\begin{equation}
  \label{p.property}
  p(x,t) = p^*(x) \quad \text{for} \quad t > x^2/\alpha^2 \,.
\end{equation}
\end{itemize}
Condition (P) expresses that there is no ignition of precipitation in
the region $\eta<\alpha$.  When the concentration passes the threshold
transversally, this condition is always satisfied.  When the
concentration reaches, but does not exceed the threshold on sets of
positive measure, weak solutions in the sense of
\cite{HilhorstHM:2009:MathematicalSO} may violate (P).  Thus,
condition (P) provides a selection criterion to distinguish physical
from unphysical weak solutions.  In
\cite{DarbenasHO:2020:ConditionalUS}, we show that a minor
modification of the construction in
\cite{HilhorstHM:2009:MathematicalSO} proves existence of weak
solutions that satisfy condition (P).  Thus, in hindsight, it would be
most natural to incorporate (P) into the definition of weak solutions
from the start.  However, to remain closer to the prior literature and
also to make more transparent which parts of the argument depend on
(P), we carry (P) as a separate condition throughout.

Referring to (P), we can define a notion of convergence for the
precipitation function; it is
\begin{equation}
  \lim_{x \to \infty} x \int_x^\infty p^*(\xi) \, \d \xi = \gamma \,.
  \label{e.p-convergence}
\end{equation}
This means that, in an integral sense, the precipitation function
along the line $\eta = \alpha$ has the same long-time asymptotics as
the precipitation function of the self-similar profile, where
$p^*(x) = \gamma/x^2$.

The results in this paper are the following.  First, we derive an
explicit expression for $\Phi_\gamma$ and prove necessary and
sufficient conditions under which it is a solution to the stationary
problem with self-similar precipitation function.  Second, we present
numerical evidence that the solution indeed converges to the
stationary profile as described.  Third, we prove that $\Phi_0$ is the
stationary profile in the transitional regime.  Fourth, in the
supercritical regime, we can only give a partial result which states
the following: If there is an asymptotic profile for the
HHMO-solution, it must be $\Phi_\gamma$ and the precipitation function
$p$ is asymptotic to the self-similar profile in the sense of
\eqref{e.p-convergence}.  Vice versa, if the precipitation function is
asymptotic to the self-similar profile, then it also satisfies
\begin{equation}
  \lim_{x \to \infty} \frac1x \int_0^x \xi^2 \, p^*(\xi) \, \d \xi
  = \gamma
\end{equation}
and the concentration $u$ converges uniformly to the profile
$\Phi_\gamma$.

The main remaining open problem is the proof of unconditional
convergence to the self-similar profile.  Part of the difficulty is
that the asymptotic behavior of the precipitation function described
above is non-local in time.  Thus, it is not clear how to pass from
convergence on a subsequence (for example, convergence of the time
average of the concentration is easily obtained via a standard
compactness argument) to convergence in general.  There is a second,
more general open question.  The derivation of the only compatible
asymptotic profile might generalize to a procedure for coarse-graining
dynamical systems whose microscopic dynamics consists of strongly
equilibrizing switches as we find in the HHMO-model for Liesegang
rings.  A precise understanding of the necessary conditions, however,
remains wide open.

Let us explain how our work relates to the extensive literature on
relay hysteresis.  The precipitation condition can be seen as a
\emph{non-ideal relay} with switching levels $0$ and $u^*$.  Its
generalization to non-binary values for $p$ in \eqref{p.H.def} or
\eqref{e.hhmo-p-weak-alternative} can be seen as a \emph{completed
relay} in the sense of Visintin
\cite{Visintin:1986:EvolutionPH,Visintin:1994:DifferentialMH}, see
also Remark~\ref{r.monotonicity}.  Local well-posedness of a
reaction-diffusion equation with a non-ideal relay reaction term was
proved by Gurevich \emph{et al.}\ \cite{GurevichST:2013:ReactionDE}
subject to a transversality condition on the initial data.  If this
condition is violated, the solution may be continued only in the sense
of a completed relay, where existence of solutions is shown in
\cite{Visintin:1986:EvolutionPH,AikiK:2008:MathematicalMB}, but
uniqueness is generally open.  Gurevich and Tikhomirov
\cite{GurevichT:2017:RattlingSD,GurevichT:2018:SpatiallyDR} show that
a spatially discrete reaction-diffusion system with relay-hysteresis
exhibits ``rattling,'' grid-scale patterns of the relay state which
are only stable in the sense of a density function.  The question of
optimal regularity of solutions to reaction-diffusion models with
relay hysteresis is discussed in \cite{ApushkinskayaU:2015:FreeBP}.
For an overview of recent developments in the field, see
\cite{CurranGT:2016:RecentAR,Visintin:2014:TenIH}.

The study of the HHMO-model as introduced above shares many features
with the results in the references cited above; it is also marred by
the same difficulties.  However, there is also a key difference to the
systems studied elsewhere: the source term in the HHMO-model is local
and, reflecting its origin through a fast-reaction limit, follows
parabolically self-similar scaling.  Thus, the nontrivial dynamics
comes from the interplay of the parabolic scaling in the forcing and
the memory of the reaction term which is attached to locations $x$ in
physical space.  The parabolic scaling also necessitates studying the
system on an unbounded domain, even though, in practice, the
concentration is rapidly decaying and can be well-approximated on
bounded domains, see Section~\ref{s.numerics} and
Appendix~\ref{a.scheme} below.  The HHMO-model has enough symmetries
that a study of the long-time behavior of the solution is possible; we
are not aware of corresponding results for other reaction-diffusion
equations with relay-hysteresis.

The paper is structured as follows.  In the preliminary
Section~\ref{s.without}, we rewrite the equations in standard
parabolic similarity variables and derive the similarity solution
without precipitation, which is a prerequisite for defining the notion
of weak solution and is also used as a supersolution in several
proofs.  In Section~\ref{weak.solution}, we recall the concept of weak
solution from \cite{HilhorstHM:2009:MathematicalSO} and prove several
elementary properties which follow directly from the definition.  In
Section~\ref{s.self-similar}, we introduce the self-similar
precipitation function, derive the stationary solution in similarity
variables and prove necessary and sufficient conditions for their
existence under the required boundary conditions.
Section~\ref{s.numerics} describes the phenomenology of solutions to
the HHMO-model by numerical simulations which confirm the picture
outlined above; details about the numerical code are given in the
appendix.  The final two sections are devoted to proving rigorous
results on the long-time asymptotics.  In Section~\ref{s.auxiliary},
we study the long-time dynamics of a linear auxiliary problem, in
Section~\ref{s.hhmo} we use the results on the auxiliary problem to
state and prove our main theorems on the long-time behavior of the
HHMO-model.

\section{Self-similar solution without precipitation}
\label{s.without}

For the reader's convenience, we recall the derivation of the
self-similar solution to the model without precipitation which was
already introduced in
\cite{HilhorstHM:2007:FastRL,HilhorstHM:2009:MathematicalSO} and is
required to define the notion of weak solution for the full model in
the next section.

Writing \eqref{e.original} in terms of the parabolic similarity
coordinates $\eta=x/\sqrt{t}$ and $s=\sqrt{t}$, and setting
$u(x,t) = v(x/\sqrt{t},\sqrt{t})$, $p[x,t;u] = q[\eta,s;v] \equiv q$
and
$\delta(\eta - \alpha) = \delta_\alpha(\eta) \equiv \delta_\alpha$, we
obtain
\begin{subequations}
  \label{e.v}
\begin{gather}
  s \, v_s - \eta \, v_\eta = 2 \, v_{\eta\eta}
  + \alpha \beta \, \delta_\alpha - 2 \, s^2 \, q[\eta,s;v] \, v \,,
  \label{e.v-a} \\
  v_\eta(0,s) = 0 \quad \text{for } s > 0 \,. \label{e.v-c}
\end{gather}
\end{subequations}
Since the change of variables is singular at $s=0$, we cannot
translate the initial condition \eqref{e.original.c} into $\eta$-$s$
coordinates.  We will augment system \eqref{e.v} with suitable
conditions when necessary.

Self-similar solution are steady-states in $\eta$-$s$ coordinates.  We
first consider the case where $p=0$ or $q=0$, respectively.  Then
\eqref{e.v} reduces to the ordinary differential equation
\begin{subequations}
  \label{e.v-zerop}
\begin{gather}
  \Psi'' + \frac\eta2 \, \Psi' + \frac{\alpha\beta}2 \, \delta(\eta-\alpha)
  = 0 \,, \label{e.ode1} \\
  \Psi'(0) = 0 \,, \label{e.v-zerop-b} \\
  \Psi(\eta) \to 0 \quad \text{as } \eta \to \infty \,.
  \label{e.v-zerop-c}
\end{gather}
\end{subequations}
Condition \eqref{e.v-zerop-c} encodes that we seek solutions where the
total amount of reactant is finite.  Note that in the full
time-dependent problem, decay of the solution at spatial infinity is
encoded into the initial data and must be shown to propagate in time
within an applicable function space setting.

The integrating factor for \eqref{e.ode1} is $\exp(\tfrac14 \eta^2)$,
so that by integrating with respect to $\eta$ and using
\eqref{e.v-zerop-b} as initial condition, we find
\begin{equation}
  \Psi'(\eta) = - \frac{\alpha\beta}2 \,
    \e^{\tfrac{\alpha^2-\eta^2}4} \, H(\eta-\alpha) \,.
\end{equation}
Another integration, this time on the interval $[\eta,\infty)$ using
condition \eqref{e.v-zerop-c}, yields
\begin{align}
  \label{self.similar.p0}
  \Psi(\eta)
  & = \frac{\alpha\beta}2 \, \e^{\tfrac{\alpha^2}4}
      \int_\eta^\infty \e^{-\tfrac{\zeta^2}4} \,
        H(\zeta-\alpha) \, \d \zeta
      \notag \\
  & = \frac{\alpha \beta \sqrt\pi}2 \, \e^{\tfrac{\alpha^2}4} \cdot
      \begin{cases}
        \erfc(\alpha/2) & \text{if } \eta \leq \alpha \,, \\
        \erfc(\eta/2) & \text{if } \eta > \alpha \,.
      \end{cases}
\end{align}
Translating this result back into $x$-$t$ coordinates and setting
$\psi(x,t)=\Psi(x/\sqrt t)$, we obtain the self-similar,
zero-precipitation solution,
\begin{gather}
\label{psi.def}
  \psi(x,t)
  = \begin{dcases}
      \frac{\alpha\beta}2 \, \e^{\tfrac{\alpha^2}4}
      \int_{\alpha\vphantom\int}^\infty
      \e^{-\tfrac{\zeta^2}4} \, \d \zeta &
      \text{if } x \leq \alpha \sqrt{t} \,, \\
      \frac{\alpha\beta}2 \, \e^{\tfrac{\alpha^2}4}
      \int_{x/\sqrt{t}}^{\infty\vphantom\int}
      \e^{-\tfrac{\zeta^2}4} \, \d \zeta &
      \text{if } x > \alpha \sqrt{t} \,.
    \end{dcases}
\end{gather}

\section{Weak solutions for the HHMO-model}
\label{weak.solution}

We start with a rigorous definition of a (weak) solution for the
HHMO-model \eqref{e.original}.  In this formulation, we allow for
fractional values of the precipitation function $p$ as \emph{a priori}
we do not know whether $p$ is binary, or will remain binary for all
times.

For non-negative integers $n$ and $k$, and $D \subset \R\times\R_+$
open, we write $C(D)$ to denote the set of continuous real-valued
functions on $D$, and
\begin{subequations}
\begin{gather}
   C^{n,k}(D)
  = \Bigl\{
      f \in C(D) \colon
      \frac{\partial^nf}{\partial x^n} \in C(D),
      \frac{\partial^kf}{\partial t^k} \in C(D)
    \Bigr\} \,.
\end{gather}
Similarly, we write $C(\R \times [0,T])$ to denote the continuous
real-valued functions on $\R \times [0,T]$, and
\begin{multline}
  C^{n,k}(\R\times[0,T])
  = \Bigl\{
      f \in C(\R\times[0,T]) \colon \\
      \frac{\partial^nf}{\partial x^n} \in C(\R\times[0,T]),
      \frac{\partial^kf}{\partial t^k} \in C(\R\times[0,T])
    \Bigr\} \,.
\end{multline}
\end{subequations}

It will be convenient to extend the spatial domain of the HHMO-model
to the entire real line by even reflection.  We write out the notation
of weak solutions in this sense, knowing that we can always go back to
the positive half-line by restriction. 

\begin{definition}
\label{weak.sol.def}
A \emph{weak solution} to problem \eqref{e.original} is a pair $(u,p)$
satisfying
\begin{enumerate}[label={\upshape(\roman*)}]
  \item $u$ and $p$ are symmetric in space, i.e.\ $u(x,t)=u(-x,t)$ and
  $p(x,t)=p(-x,t)$ for all $x \in \R$ and $t\ge0$,
  \item\label{weak.ii} $u-\psi\in C^{1,0}(\R\times[0,T])\cap L^{\infty}(\R\times[0,T])$ for
  every $T>0$,
  \item\label{weak.iii} $p$ is measurable and satisfies
  \eqref{e.hhmo-p-weak-alternative},
\item\label{weak.3.5} $p(x,t)$ is non-decreasing in time $t$ for every
$x \in \R$,
\item\label{weak.iv} the relation
\begin{equation}
\label{weak.sol.def.eq}
  \int_0^T\int_\R\varphi_t \, (u-\psi) \, \d y \, \d s
  = \int_0^T \int_\R
    \bigl(
      \varphi_x \, (u-\psi)_x + p \, u \, \varphi
    \bigr) \, \d y \, \d s
\end{equation}
holds for every $\varphi\in C^{1,1}(\R\times[0,T])$ that vanishes for
large values of $|x|$ and for time $t=T$.
\end{enumerate}
\end{definition}

\begin{remark}
The regularity class for weak solutions we require here is less strict
than the regularity class assumed by Hilhorst \emph{et al.}\
\cite[Equation~12]{HilhorstHM:2009:MathematicalSO}, who consider
solutions of class
\begin{equation}
   u-\psi \in C^{1+\ell,\frac{1+\ell}2}(\R\times[0,T])
  \cap H^1_{\loc}(\R\times[0,T])  \,.
\end{equation}
for every $\ell\in(0,1)$, where $C^{\alpha,\beta}$ denote the usual
H\"older spaces; see, e.g., \cite{LadyzhenskajaSU:1968:LinearQP}.
They prove existence of a weak solution in this stronger sense.
Clearly, every weak solution in their setting are solutions to our
problem.  The question of uniqueness is open for both formulations,
but partial results are available
\cite{Darbenas:2018:PhDThesis, DarbenasHO:2020:ConditionalUS}.
\end{remark}

\begin{remark}
\label{r.monotonicity}
The monotonicity condition \ref{weak.3.5} is not included in the
definition of weak solutions by Hilhorst \emph{et al.}\
\cite{HilhorstHM:2009:MathematicalSO}.  Their construction, however,
always preserves monotonicity so that existence of solutions
satisfying this condition is guaranteed.  In the following, it is
convenient to assume monotonicity.  We note that, due to condition
\eqref{e.hhmo-p-weak-alternative}, monotonicity only ever becomes an
issue when $u$ grazes, but does not exceed the precipitation threshold
on sets of positive measure in space-time.  We do not know if such
highly degenerate solutions exist, but the results in
\cite{DarbenasO:2021:BreakdownLP} suggest that this might be the case.
We also remark that the definition of a \emph{completed relay} by
Visintin \cite{Visintin:1986:EvolutionPH,Visintin:1994:DifferentialMH}
includes the requirement of monotonicity.
\end{remark}

To proceed, we introduce some more notation. When $u^*<\Psi(\alpha)$,
we write $\alpha^*$ to denote the unique solution to
\begin{equation}
  \label{alpha.star}
  \Psi(\alpha^*)=u^* \,,
\end{equation}
where $\Psi$ is the precipitation-less solution given by equation
\eqref{self.similar.p0}, and we set
\begin{equation}
\label{d.star}
 D^* = \{(x,t) \colon 0<\alpha^*\sqrt t<x\} \,.
\end{equation}
Further, we abbreviate $[f-g](y,s)=f(y,s)-g(y,s)$ and
$[fg](y,s)=f(y,s) \, g(y,s)$.

In the following, we prove a number of properties which are implied by
the notion of weak solution.  In these proofs, as well as further in
this paper, we rely on the fact that we can read
\eqref{weak.sol.def.eq} as the weak formulation of a \emph{linear}
heat equation of the form
\begin{equation}
  w_t - w_{xx} = g(x,t) \,,
  \label{e.w-strong}
\end{equation}
for a given bounded integrable right-hand function $g$.  We shall
write the equations in their classical form \eqref{e.w-strong} where
convenient with the understanding that they are satisfied in the sense
of \eqref{weak.sol.def.eq}.  Further, in the functional setting of
Definition~\ref{weak.sol.def}, the solution is regular enough such
that it is unique for fixed $g$, the Duhamel formula holds true, and,
consequently, the subsolution resp.\ supersolution principle is
applicable.  For a detailed verification of these statement from first
principles, see, e.g., \cite[Appendix~B]{Darbenas:2018:PhDThesis}.

\begin{lemma}
\label{u.psi}
Any weak solution $(u,p)$ of \eqref{e.original} satisfies
$[u-\psi](x,0)=0$, $0<u\le\psi$ for $t>0$, and $p=0$ on $D^*$.
\end{lemma}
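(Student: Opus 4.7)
The plan is to read \eqref{weak.sol.def.eq} as the weak form of a \emph{linear} heat equation for $w := u-\psi$, namely $w_t - w_{xx} = -pu$, with bounded measurable potential $p\in[0,1]$ and bounded source $-pu$. Under this lens, the full linear parabolic toolbox (Duhamel's formula, nonnegative fundamental solution of $\partial_t-\partial_{xx}+p$, weak and strong maximum principles) is at my disposal, as noted in the excerpt and detailed in \cite[Appendix~B]{Darbenas:2018:PhDThesis}. I would establish the three conclusions in sequence: the initial identity, the two-sided bound $0<u\le\psi$, and finally the vanishing of $p$ on $D^*$.

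First, for the initial identity $[u-\psi](x,0)=0$, I test \eqref{weak.sol.def.eq} against separable functions $\varphi(x,t)=\chi(x)\,\theta_\eps(t)$ with $\chi\in C^1_c(\R)$ arbitrary and $\theta_\eps$ a piecewise-linear cutoff equal to $1$ on $[0,\eps/2]$ and vanishing for $t\ge\eps$. The right-hand side is $O(\eps)$ by boundedness of the integrand, while the left-hand side reduces to $-\tfrac{2}{\eps}\int_{\eps/2}^\eps F(t)\,\d t$ with $F(t):=\int_\R\chi(x)[u-\psi](x,t)\,\d x$; by the continuity from condition (ii) this converges to $-F(0)$. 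Varying $\chi$ then forces $[u-\psi](\,\cdot\,,0)\equiv 0$. Second, rewriting the weak equation as $w_t-w_{xx}+pw=-p\psi$ with zero initial data, the right-hand side is nonpositive since $p,\psi\ge 0$, and Duhamel against the nonnegative Green's function for $\partial_t-\partial_{xx}+p$ yields $w\le 0$, i.e.\ $u\le\psi$. Strict positivity $u>0$ on $\R\times(0,T]$ follows by applying the same Duhamel formula to the equation for $u$ itself, whose source is the positive measure $(\alpha\beta/2\sqrt t)\delta(x-\alpha\sqrt t)$ carried along a curve that illuminates every space-time point via the Gaussian lower bound for the Green's function.

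Third, for $p=0$ on $D^*$, I note that $(x,t)\in D^*$ implies $(x,s)\in D^*$ for every $s\in(0,t]$, because $s\mapsto x/\sqrt s$ is decreasing; combined with the strict monotonicity of $\Psi$ on $[\alpha,\infty)$ from \eqref{self.similar.p0}, this gives
\begin{equation*}
  u(x,s) \le \psi(x,s) = \Psi(x/\sqrt s) \le \Psi(x/\sqrt t) < \Psi(\alpha^*) = u^*
\end{equation*}
for every $s\in(0,t]$, hence $\sup_{s\in[0,t]}u(x,s)<u^*$ and the first case of \eqref{e.hhmo-p-weak-alternative} forces $p(x,t)=0$.

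The genuine obstacle is Step~1: I must approximate a delta function in time while remaining within the admissible test function class in Definition~\ref{weak.sol.def} (only $C^{1,1}$ in space--time, vanishing at $t=T$, with no constraint at $t=0$), and then verify that the boundary contribution from the left-hand side survives the limit $\eps\downarrow 0$ while the bulk term from the right-hand side does not. Everything else is a routine linear parabolic comparison together with an algebraic inequality extracted from the explicit expression \eqref{self.similar.p0} for $\Psi$.
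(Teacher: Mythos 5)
Your proposal is correct, and the core of it (reading \eqref{weak.sol.def.eq} as a linear heat equation for $w=u-\psi$, getting $u\le\psi$ by comparison, $u>0$ from positivity of the Green's function with bounded potential, and then $p=0$ on $D^*$ from $u\le\psi<u^*$ via \eqref{e.hhmo-p-weak-alternative}) matches the paper's argument. The one step you handle by a genuinely different route is the initial identity $[u-\psi](x,0)=0$: you extract it directly from the weak formulation by testing against $\chi(x)\theta_\eps(t)$ with $\theta_\eps$ collapsing onto $t=0$, which is self-contained but forces you to smooth the piecewise-linear cutoff to stay in the $C^{1,1}$ test class (the ``genuine obstacle'' you flag is real but routine). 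The paper instead proves this claim \emph{last}, as a corollary of the squeeze $0<u\le\psi$ together with $\psi(x,t)=\Psi(x/\sqrt t)\to 0$ as $t\searrow 0$ for fixed $x>0$ and the continuity of $u-\psi$ up to $t=0$ from Definition~\ref{weak.sol.def}\ref{weak.ii}; no additional test-function argument is needed because the absence of a $t=0$ boundary term in \eqref{weak.sol.def.eq} is already baked into the Duhamel formula the paper invokes. Your ordering has the advantage of supplying the zero initial datum \emph{before} the comparison step, which makes the inequality $w\le 0$ (from $w_t-w_{xx}+pw=-p\psi\le0$) non-circular on its face, whereas the paper delegates that bookkeeping to the sub-/supersolution machinery of \cite[Appendix~B]{Darbenas:2018:PhDThesis}. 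You are also more careful than the paper on the last point: since \eqref{e.hhmo-p-weak-alternative} involves $\sup_{s\in[0,t]}u(x,s)$, one must check, as you do, that $(x,t)\in D^*$ implies $(x,s)\in D^*$ for all $s\in(0,t]$ and that the bound $\Psi(x/\sqrt s)\le\Psi(x/\sqrt t)<u^*$ is uniform in $s$; the paper leaves this implicit.
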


\begin{proof}
The inequality $u\le\psi$ is a direct consequence of the subsolution
principle.  Hence, $u\le\psi<u^*$ on $D^*$, so $p=0$ on $D^*$.  Now
consider the weak solution to
\begin{subequations}
\begin{gather}
  u^\ell_t = u^\ell_{xx} +
        \frac{\alpha \beta}{2 \sqrt t} \, \delta (x - \alpha \sqrt{t})
        - u^\ell \,,\\
  u^\ell_x(0,t) = 0 \quad \text{for } t > 0 \,, \\
  u^\ell(x,0) = 0 \quad \text{for } x>0 \,,
\end{gather}
\end{subequations}
which transforms into
\begin{equation}
  (\e^t \, u^\ell)_t
  = (\e^t \, u^\ell)_{xx} + \e^t \, \frac{\alpha \beta}{2 \sqrt t} \,
    \delta (x - \alpha \sqrt{t}) \,.
\end{equation}
As the distribution on the right hand side is positive, the Duhamel
principle implies that $\e^t \, u^\ell$ is positive for $t>0$, and so
is $u^\ell$.  Due to the subsolution principle, we find
$u\ge u^\ell > 0$ for $t>0$.  Finally, since
$\lim_{t\to\infty} \psi(x,t)=0$ for $x>0$ fixed, this implies
$[u-\psi](x,0)=0$.
\end{proof}

\begin{lemma}
\label{p.dependent.u}
The precipitation function $p$ is essentially determined by the
concentration field $u$, i.e., if $(u,p_1)$ and $(u,p_2)$ are weak
solutions to \eqref{e.original} on $\R\times[0,T]$, then $p_1= p_2$
almost everywhere on $\R\times[0,T]$.
\end{lemma}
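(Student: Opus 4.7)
The plan is to exploit the linearity of the weak formulation in $p$. Suppose $(u,p_1)$ and $(u,p_2)$ are two weak solutions sharing the same concentration $u$. Writing \eqref{weak.sol.def.eq} for each and subtracting, the $\varphi_t\,(u-\psi)$ and $\varphi_x\,(u-\psi)_x$ contributions cancel identically since $u$ (and hence $\psi$, which depends only on $\alpha,\beta$) is the same in both. We are left with
\begin{equation*}
  \int_0^T \int_\R (p_1 - p_2) \, u \, \varphi \, \d y \, \d s = 0
\end{equation*}
for every admissible test function $\varphi \in C^{1,1}(\R \times [0,T])$ that vanishes for large $|x|$ and at $t = T$.

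Next I would invoke a density argument. The admissible test-function class includes $C_c^\infty(\R \times (0,T))$, which is dense in $L^1(\R \times (0,T))$. Since $(p_1 - p_2) u$ lies in $L^\infty(\R \times [0,T])$ by Definition~\ref{weak.sol.def}\ref{weak.iii} and Lemma~\ref{u.psi}, the fundamental lemma of the calculus of variations yields
\begin{equation*}
  (p_1 - p_2)(x,t) \, u(x,t) = 0 \quad \text{for a.e.\ } (x,t) \in \R \times [0,T] \,.
\end{equation*}

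Finally, by Lemma~\ref{u.psi} we have $u(x,t) > 0$ for every $t > 0$ and every $x \in \R$, so we may divide by $u$ on the open half-space $\R \times (0,T]$ to conclude $p_1 = p_2$ almost everywhere there. Since the slice $\R \times \{0\}$ has two-dimensional Lebesgue measure zero, this upgrades to $p_1 = p_2$ a.e.\ on $\R \times [0,T]$.

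There is no real obstacle: positivity of $u$ for $t>0$ (already supplied by Lemma~\ref{u.psi}) is the only ingredient beyond the linearity-in-$p$ of the weak formulation. The only mild subtlety is making sure that $(p_1-p_2)u \in L^1_{\loc}$ so that the variational lemma applies, which is immediate from $0 \le p_i \le 1$ and the essential boundedness of $u$ on bounded time intervals (Definition~\ref{weak.sol.def}\ref{weak.ii}).
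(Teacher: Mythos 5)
Your proposal is correct and follows essentially the same route as the paper: subtract the two weak formulations to get $\int_0^T\int_\R (p_1-p_2)\,u\,\varphi\,\d x\,\d t=0$, use density of the admissible test functions to conclude $(p_1-p_2)\,u=0$ a.e., and then invoke the strict positivity $u>0$ for $t>0$ from Lemma~\ref{u.psi}. The only cosmetic difference is that you argue via $L^1$-density and the fundamental lemma of the calculus of variations where the paper cites $L^2$-density, which changes nothing.
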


\begin{proof}
Taking the difference of \eqref{weak.sol.def.eq} with $p=p_1$ and
$p=p_2$, we find
\begin{equation}
	\int_0^t\int_\R (p_1-p_2)\, u \, \varphi\,\d x\, \d t = 0
\end{equation}
for every $\varphi\in C^{1,1}(\R\times[0,T])$ that vanishes for large
values of $|x|$ and time $t=T$.  As such functions are dense in
$L^2(\R\times[0,T])$, we conclude $(p_1-p_2) \, u=0$ a.e.\ in
$\R\times[0,T]$.  Moreover, $u>0$ for $t>0$, so that $p_1=p_2$ a.e.\
in $\R\times[0,T]$.
\end{proof}

\begin{theorem}[Weak solutions with subcritical precipitation threshold]
\label{weak.sol.ustar.threshold.subcritical}
When $u^*>\Psi(\alpha)$, then $(\psi,0)$ is the unique weak
solution of \eqref{e.original}.
\end{theorem}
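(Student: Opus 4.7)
The plan is to combine Lemma~\ref{u.psi} with the explicit form of $\Psi$ to force any weak solution to stay strictly below the supersaturation threshold, which kills the precipitation term and reduces the problem to the linear heat equation with the $\delta$-source, solved uniquely by $\psi$.

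First I would read off from the explicit formula \eqref{self.similar.p0} that $\Psi$ is constant equal to $\Psi(\alpha)$ on $[0,\alpha]$ and, by strict monotonicity of $\erfc$, strictly decreasing on $(\alpha,\infty)$. Hence $\Psi$ attains its global supremum at $\eta=\alpha$, and therefore $\psi(x,t) = \Psi(x/\sqrt t) \le \Psi(\alpha)$ uniformly in $(x,t)$. Combining this uniform bound with the comparison $u \le \psi$ from Lemma~\ref{u.psi} and with the standing hypothesis $u^* > \Psi(\alpha)$, any weak solution $(u,p)$ then satisfies the strict inequality $u(x,t) \le \Psi(\alpha) < u^*$ on all of $\R\times(0,\infty)$.

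Because $\sup_{s\in[0,t]} u(x,s) \le \Psi(\alpha) < u^*$ for every $(x,t)$, condition \eqref{e.hhmo-p-weak-alternative} forces $p \equiv 0$ almost everywhere. Substituting $p=0$ into \eqref{weak.sol.def.eq}, the difference $w = u-\psi$ satisfies weakly the homogeneous heat equation with $w(\,\cdot\,,0)=0$ (from Lemma~\ref{u.psi}) in the regularity class $C^{1,0}\cap L^\infty$ specified in Definition~\ref{weak.sol.def}. Uniqueness for the bounded heat equation in this class---which, as noted in the remarks following \eqref{e.w-strong}, follows from the sub/supersolution principle applied to $\pm w$---then yields $u = \psi$. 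Existence of $(\psi,0)$ as a weak solution is immediate by inspection of Definition~\ref{weak.sol.def}.

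I do not anticipate a genuine obstacle. The entire argument hinges on the single fact that $\Psi$ is globally bounded by $\Psi(\alpha)$, so that the precipitation threshold can never be attained; everything else is routine linear theory that has already been put in place earlier in the paper.
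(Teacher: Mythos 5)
Your argument is correct and is essentially the paper's proof: the chain $u \le \psi \le \Psi(\alpha) < u^*$ (via Lemma~\ref{u.psi} and the fact that $\Psi$ peaks at $\eta=\alpha$) forces $p\equiv 0$, and uniqueness for the linear problem gives $u=\psi$. You merely spell out the intermediate steps that the paper leaves implicit.
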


\begin{proof}
We know that $u\le\psi$ from Lemma~\ref{u.psi}. Therefore, the
threshold $u^*$ will be never reached. So $p=0$ and, due to the
uniqueness of weak solutions for linear parabolic equations, $u=\psi$.
\end{proof}

The following result shows that, in general, we cannot expect
uniqueness of weak solutions:  When the precipitation threshold is
marginal, the concentration can remain at the threshold for large
regions of space-time.  Within such regions, spontaneous onset of
precipitation is possible on arbitrary subsets, thus a large number of
nontrivial weak solutions exists.
The precise result is the following.

\begin{theorem}[Weak solutions with marginal precipitation threshold]
\label{weak.sol.ustar.threshold.critical}
When $u^*=\Psi(\alpha)$, the set of weak solutions to
\eqref{e.original} is equal to the set of pairs $(u,p)$ such that
\begin{enumerate}[label={\upshape(\roman*)}]
\item\label{wsut.i} $p$ is an even measurable function taking values
in $[0,1]$,
\item $p(x,t)$ is non-decreasing in time $t$ for every
$x \in \R$,
\item\label{wsut.ii} there exists $b>0$ such that $p(x,t)=0$ if
$(x,t) \notin U=[-b,b]\times[b^2/\alpha^2, \infty)$,
\item\label{wsut.iii} $(u,p)$ satisfies the weak form of the equation of
motion, i.e., Definition~\textup{\ref{weak.sol.def}}~\ref{weak.iv}
holds true.
\end{enumerate}
\end{theorem}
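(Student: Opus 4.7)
The plan is to establish the claimed set equality by proving both inclusions, with the Duhamel representation
\begin{equation}
  (\psi-u)(x,t)
  = \int_0^t\!\!\int_\R G(x-y,t-\tau)\,p(y,\tau)\,u(y,\tau)\,\d y\,\d\tau,
  \label{e.duhamel-proposal}
\end{equation}
for the standard heat kernel $G$ on $\R$ as the central tool. This identity is legitimate because, by Lemma~\ref{u.psi}, $u-\psi$ is the unique solution of the linear parabolic problem with right-hand side $-pu$, vanishing at $t=0$ and lying in the regularity class required by Definition~\ref{weak.sol.def}.

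For the inclusion $(\supseteq)$, I start from a pair $(u,p)$ satisfying (i)--(iv) and verify the conditions of Definition~\ref{weak.sol.def}. Evenness, monotonicity in $t$, and the weak PDE carry over directly, while the regularity of $u-\psi$ follows from \eqref{e.duhamel-proposal} using that $pu$ is bounded and, by (iii), compactly supported in space. The substantial step is \eqref{e.hhmo-p-weak-alternative}: since $u\leq\psi\leq u^*=\Psi(\alpha)$, only the implication $p(x_0,t_0)>0\Rightarrow\sup_{s\in[0,t_0]}u(x_0,s)=u^*$ requires proof. For any such $(x_0,t_0)\in U$, condition (iii) forces $p\equiv 0$ for $\tau<s_0:=b^2/\alpha^2$, so uniqueness for the linear equation yields $u=\psi$ on $\R\times[0,s_0]$; combined with $|x_0|\leq b=\alpha\sqrt{s_0}$ and the explicit formula \eqref{self.similar.p0}, this gives $u(x_0,s_0)=\psi(x_0,s_0)=\Psi(\alpha)=u^*$, as required.

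For the harder inclusion $(\subseteq)$, the only nontrivial task is to produce the threshold $b$ in (iii). Take any Lebesgue point $(x_0,t_0)$ of the set $\{p>0\}$. Then \eqref{e.hhmo-p-weak-alternative} together with $u\leq u^*$ forces $u(x_0,s_0)=u^*$ for some $s_0\in[0,t_0]$, and the plateau structure of $\Psi$ in \eqref{self.similar.p0} forces $s_0\geq x_0^2/\alpha^2$. Substituting into \eqref{e.duhamel-proposal}, the strict positivity of $G$ for $\tau<s_0$ combined with $u>0$ from Lemma~\ref{u.psi} forces $p=0$ almost everywhere on $\R\times(0,s_0)$; the monotonicity of $p$ in $t$ then upgrades this to pointwise vanishing below $s_0$ at a.e.~$y$. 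Consequently every Lebesgue point of $\{p>0\}$ must satisfy $t\geq x_0^2/\alpha^2$; a sequence with $|x_n|\to\infty$ in the essential support would yield cut-off times $s_n\to\infty$ eventually contradicting the positivity of $p$ at any preselected Lebesgue point, so $b^*:=\esssup\{|x|\colon\exists\,t,\ p(x,t)>0\}$ is finite. Letting $x_0$ approach the edge of the essential support then gives a threshold time $T^*\geq(b^*)^2/\alpha^2$ below which $p\equiv 0$, so (iii) holds with any $b\geq\max(b^*,1)$.

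The main obstacle will be the support-boundedness step in $(\subseteq)$: I must combine the positivity of the heat kernel, the strict positivity of $u$ from Lemma~\ref{u.psi}, and the monotonicity of $p$ in $t$ to convert an a.e.\ vanishing conclusion into a pointwise one, and then iterate the argument along a sequence approaching the essential boundary of the support, taking care to distinguish between pointwise and essential notions of support throughout. All remaining verifications amount to routine translations between the two formulations or classical parabolic regularity.
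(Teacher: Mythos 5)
Your overall strategy coincides with the paper's: both directions are handled, the $(\supseteq)$ inclusion rests on $u=\psi$ up to time $b^2/\alpha^2$ together with the plateau $\psi(x,x^2/\alpha^2)=\Psi(\alpha)=u^*$ for $|x|\le b$, and the $(\subseteq)$ inclusion rests on the Duhamel identity plus strict positivity of the heat kernel and of $u$ (Lemma~\ref{u.psi}) to force $p=0$ a.e.\ before the first time the threshold is attained. Your organization of the $(\subseteq)$ direction around density points of $\{p>0\}$ is a cosmetic variant of the paper's definition of $T=\inf\{t: m(A(t))>0\}$ and leads to the same conclusion.

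There is, however, one concrete error in your last step. Condition \ref{wsut.ii} is \emph{not} monotone in $b$: enlarging $b$ relaxes the spatial constraint $|x|\le b$ but \emph{tightens} the temporal constraint, since $p$ must then vanish on the larger strip $t<b^2/\alpha^2$. Hence the claim that \ref{wsut.ii} ``holds with any $b\ge\max(b^*,1)$'' is false in general: if precipitation with positive measure occurs immediately after time $T^*$, any $b>\alpha\sqrt{T^*}$ fails. What your own estimates actually deliver is that $p=0$ a.e.\ for $|x|>b^*$ and $p=0$ a.e.\ for $t<T^*$ with $T^*\ge (b^*)^2/\alpha^2$, so the admissible choices are exactly $b\in[b^*,\alpha\sqrt{T^*}]$; taking $b=\alpha\sqrt{T^*}$ (the paper's $b=\alpha\sqrt{T}$) or $b=b^*$ closes the argument. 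This is a one-line fix, but as written the final sentence asserts something untrue. A minor further caveat, shared with the paper: your argument yields \ref{wsut.ii} only up to null sets in $x$ (the monotonicity of $p$ in $t$ upgrades a.e.\ vanishing in $(x,t)$ to vanishing for all $t<T^*$ only for a.e.\ $x$), so the set equality must be read modulo the a.e.\ identification justified by Lemma~\ref{p.dependent.u}.
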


\begin{proof}
Assume that $(u,p)$ is any pair satisfying
\ref{wsut.i}--\ref{wsut.iii}.  To show that $(u,p)$ is a weak
solution, we need to verify that it is compatible with condition
\eqref{e.hhmo-p-weak-alternative}; all other properties are trivially
satisfied by construction.  Since $u\le\psi\le\Psi(\alpha)=u^*$, it
suffices to prove that $p(x,t)>0$ implies
$\max_{\tau\in[0,t]}u(x,\tau) = u^*$.  We begin by observing that
$u(x,t)=\psi(x,t)$ for all $x\in\R$ if $t\in[0,b^2/\alpha^2]$.  Since,
by construction, $p(x,t)>0$ only for $(x,t)\in U$, this implies
\begin{equation}
  \max_{t\in[0,b^2/\alpha^2]}u(x,t)
  \ge u(x,x^2/\alpha^2)
  = \psi(x,x^2/\alpha^2) = u^* \,.
\end{equation}
In other words, $(u,p)$ is compatible with
\eqref{e.hhmo-p-weak-alternative} on $U$.  For $(x,t) \notin U$,
$p(x,t)=0$ and \eqref{e.hhmo-p-weak-alternative} is trivially
satisfied.  Altogether, this proves that that $(u,p)$ is a weak
solution on the whole domain $\R\times\R_+$.

Vice versa, assume that $(u,p)$ is a weak solution.  If $p=0$ a.e.,
then $u=\psi$ and \ref{wsut.i}--\ref{wsut.iii} are satisfied for any
$b>0$.  Otherwise, define
\begin{gather}
  A(t) = \{ (x,\tau) \colon \tau \leq t \text{ and } p(x,\tau)>0 \} \,, \\
  T = \inf \{ t>0 \colon m(A(t))>0 \}\,,
\end{gather}
where $m$ denotes the two-dimensional Lebesgue measure.  By
definition, $p=0$ a.e.\ on $\R\times[0,T]$ so that $u=\psi$ on
$\R\times[0,T]$.  We also note that
\begin{gather}
  m(\{(x,t) \colon t\in[T,T+\varepsilon] \text{ and } p(x,t)>0\})>0
\end{gather}
for every $\varepsilon>0$ and that $u(x,t)>0$ for all $t>0$.  Then for
every $t>T$, by the Duhamel principle,
\begin{equation}
  u(x,t)
  = \psi(x,t)
    - \int_0^t\int_\R\HK(y,\tau) \, [pu](x-y,t-\tau) \, \d y\,\d \tau
  < \psi(x,t) \le \Psi(\alpha) \,,
  \label{e.inequ1}
\end{equation}
where $\HK$ is the standard heat kernel
\begin{equation}
  \HK(x,t) = \begin{dcases}\frac1{\sqrt{4\pi t}} \,
                 \e^{-\tfrac{x^2}{4t}}&\text{if }t>0 \,,\\
		 0&\text{if }t\le0 \,.
	      \end{dcases}
\end{equation}
We first note that $T>0$.  Indeed, if $T$ were zero, \eqref{e.inequ1}
would imply that $u(x,t)<u^*$ for all $x\neq 0$, so that $p=0$ a.e., a
contradiction.  Moreover, taking $\abs{x}>\alpha\sqrt{T}$,
\begin{equation}
  \max_{t\in[0,T]}u(x,t)
  \le \max_{t\in[0,T]} \psi(x,t)
  = \psi(x,T)
  < \Psi(\alpha) \,.
  \label{e.inequ2}
\end{equation}
Inequalities \eqref{e.inequ1} and \eqref{e.inequ2} imply that
$p(x,t)=0$ so that \ref{wsut.i}--\ref{wsut.iii} are satisfied with
$b=\alpha\sqrt{T}>0$.
\end{proof}

\begin{remark}
\label{r.nonunique}
Theorem~\ref{weak.sol.ustar.threshold.critical} illustrates how
non-uniqueness of weak solutions arises in the case of a marginal
precipitation threshold.  One obvious solution is $u=\psi$ and $p=0$.
Solutions with nonvanishing precipitation can be constructed as
follows.  Fix any $b>0$ and take any even measurable function $p^*$
taking values in $[0,1]$ with $\supp p^* \subset [-b,b]$.  Set
$p(x,t) = p^*(x) \, H(t-b^2/\alpha^2)$.  Then $p$ satisfies
\ref{wsut.i}--\ref{wsut.ii}.  On the time interval $[0,b^2/\alpha^2]$,
$u=\psi$ satisfies the weak form.  For $t>b^2/\alpha^2$, determine $u$
as the weak solution to the linear parabolic equation
\eqref{e.original.a} with the given function $p$.  Then, by
construction, $(u,p)$ is a weak solution in the sense of
Definition~\ref{weak.sol.def}.
\end{remark}

\begin{remark}
\label{r.prec-condition}
Theorem~\ref{weak.sol.ustar.threshold.critical} admits more weak
solutions than those described in Remark~\ref{r.nonunique}.  We note
that, in particular, the precipitation condition
\eqref{e.hhmo-p-weak-alternative} allows ``spontaneous precipitation''
even when the maximum concentration has fallen below the precipitation
threshold everywhere provided the concentration has been at the
threshold at earlier times.  This behavior should be considered
unphysical and is discarded, for the purposes of this paper, by
imposing condition (P).
\end{remark}

The following result shows that the concentration $u$ is uniformly
Lipschitz in $x$-$t$ coordinates.  It does not imply a uniform
Lipschitz estimate with respect to the spatial similarity coordinate
$\eta$; due to the change of coordinates, the constant will grow
linearly in $t$.  However, the conjectured asymptotics of the
precipitation function implies uniformity in similarity variables. We
will not use this result in the remainder of the paper, but state it
here as the best estimate which we were able to obtain by direct
estimation in the Duhamel formula or using energy methods.

\begin{lemma} \label{l.Lipschitz}
Let $(u,p)$ be a weak solution to \eqref{e.original}.  Then, for any
$T>0$, $u$ is uniformly Lipschitz continuous on $\R \times [T,\infty)$.
\end{lemma}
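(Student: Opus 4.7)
The natural decomposition is $u = \psi - w$, where by Lemma~\ref{u.psi} the function $w := \psi - u$ is nonnegative, bounded above by $\Psi(0)$, and satisfies the linear heat equation
\[
  w_t - w_{xx} = p\,u, \qquad w(\cdot, 0) = 0.
\]
The source $p u$ is bounded pointwise by $\Psi(0)$ and, for each fixed $\tau$, compactly supported in $\{|y| \leq \alpha^* \sqrt{\tau}\}$: by Lemma~\ref{u.psi} when $u^* < \Psi(\alpha)$, trivially by Theorem~\ref{weak.sol.ustar.threshold.subcritical} when $u^* > \Psi(\alpha)$ (in which case $w \equiv 0$), and by Theorem~\ref{weak.sol.ustar.threshold.critical} in the marginal case. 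Uniform Lipschitz continuity of $u$ thus reduces to that of $\psi$ and $w$ separately.

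For $\psi$, direct differentiation of the explicit formula \eqref{psi.def} yields $|\psi_x| \leq \alpha\beta/(2\sqrt{t})$ uniformly in $x$ (with a jump of this magnitude across the kink line $x = \alpha\sqrt{t}$) and $|\psi_t| \leq C/t$, which together give uniform Lipschitz continuity of $\psi$ on $\R \times [T,\infty)$. For $w$, I would work from the Duhamel representation
\[
  w(x, t) = \int_0^t \int_\R \HK(x - y, t - \tau)\,[pu](y,\tau)\,\d y\,\d\tau
\]
and estimate the spatial and temporal increments separately. The spatial increment $|w(x_1, t) - w(x_2, t)|$ is controlled by bounding the kernel difference $|\HK(x_1 - y, s) - \HK(x_2 - y, s)|$ using two complementary inequalities---the pointwise bound $|x_1 - x_2|\cdot\|\HK_x(\cdot,s)\|_{L^\infty_y} \leq C|x_1-x_2|/s$ and the $L^1_y$ bound $|x_1-x_2|/\sqrt{\pi s}$---choosing whichever is more favorable on each subinterval of the $\tau$-integration, and exploiting the finite spatial support of $pu$ to truncate the contribution from large $|y|$. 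The temporal increment splits via the semigroup property into a contribution from $\tau \in [0, t_1]$ controlled by $\|\HK_t(\cdot, s)\|_{L^1_y} \sim 1/s$ and one from $\tau \in [t_1, t_2]$ controlled by $\|pu\|_\infty \cdot |t_2 - t_1|$.

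The principal difficulty is keeping the Lipschitz constant from blowing up with $t$: the crude estimate $\int_0^t \|\HK_x(\cdot, t-\tau)\|_{L^1_y}\,\d\tau \sim \sqrt{t}$ is already too weak, so the argument must carefully trade off the dispersive decay of $\HK_x$ at large distances against the growing-but-bounded support of $pu$. As the authors note, this is the best estimate obtainable by direct Duhamel or energy methods; the sharper uniform Lipschitz continuity in the similarity variable $\eta$---which would require $u_x = \Oc(1/\sqrt{t})$ rather than merely boundedness---is contingent on the conjectured asymptotic behavior of the precipitation function and remains open.
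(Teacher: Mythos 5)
Your setup---the decomposition $w=\psi-u$, the Duhamel representation, and the split of the time integration into a far-past and a recent-past piece---matches the paper's, and your treatment of $\psi$ and of the recent-past contribution (bounded by $\lVert pu\rVert_{L^\infty}$ times the $L^1_y$ norm of the kernel difference over a time interval of length $O(1)$) is sound. The gap sits exactly where you flag ``the principal difficulty'': the far-past contribution. The trade-off you propose does not close it. The $L^1_y$ kernel bound gives $\int_0^{t}\lvert x_1-x_2\rvert\,(\pi(t-\tau))^{-1/2}\,\d\tau\sim \sqrt t\,\lvert x_1-x_2\rvert$, while the pointwise bound $C\lvert x_1-x_2\rvert/(t-\tau)$ combined with the support $\lvert y\rvert\le\alpha^*\sqrt\tau$ gives $\int \sqrt\tau\,(t-\tau)^{-1}\,\d\tau$, which is again of order $\sqrt t$ (up to a logarithm); no splitting of $[0,t]$ between these two bounds produces a $t$-uniform constant. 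In fact boundedness plus parabolically growing compact support of the source cannot suffice on their own: for the model source $g=\I_{\{\lvert y\rvert\le\sqrt\tau\}}$ the heat potential $\int_0^t\int_\R \HK(x-y,t-\tau)\,g(y,\tau)\,\d y\,\d\tau$ is not even bounded in time, so no kernel estimate applied directly to such data can yield a uniform Lipschitz bound.

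The missing ingredient is that the argument must use the uniform boundedness of $w$ itself ($0\le w\le\psi\le\Psi(0)$, from Lemma~\ref{u.psi}), not merely that of the source $pu$. The paper achieves this through the kernel-domination inequality
\[
  \lvert \HK_x(z,s)\rvert \le c(\delta)\,\HK(z,s+\delta)
  \qquad\text{for } s\ge\delta,
\]
obtained by factoring $\HK_x$ as a bounded multiple (of the form $\zeta\e^{-\zeta^2}$) of the kernel at the shifted time $s+\delta$. Inserting this into the far-past integral and enlarging the time domain to $[0,t+\delta]$ resums the triple integral into $c(\delta)\,\lvert x_2-x_1\rvert\,\sup_\xi \lvert w(\xi,t+\delta)\rvert$, which is uniformly bounded. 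Without this (or an equivalent) device your argument stalls at a Lipschitz constant of order $\sqrt t$, which---as the paper's remark following the lemma points out---is sharp for a generic bounded right-hand side.
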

\begin{proof}
Let $w = \psi - u$.  A weak solution must satisfy the Duhamel formula
(see, e.g., \cite[Appendix~B]{Darbenas:2018:PhDThesis}),
so
\begin{align}
  w(x_2,t) - w(x_1,t)
  & = \int_0^t \int_\R
        \bigl( \HK(x_2-y, t-\tau) - \HK(x_1-y, t-\tau) \bigr) \,
        [pu](y,\tau) \, \d y \, \d \tau
      \notag \\
  & \equiv W_{[0,t-\delta]} + W_{[t-\delta,t]} \,,
\end{align}
where we split the domain of time-integration into two subintervals
and write $W_I$ to denote the contribution from subinterval $I$.  In
the following, we suppose that $x_1<x_2$ and choose
$\delta = \min \{ t, \tfrac14 \}$.

On the subinterval $[0, t-\delta]$, if not empty, we apply the
fundamental theorem of calculus, so that
\begin{equation}
  \lvert W_{[0,t-\delta]} \rvert
  = \int_0^{t-\delta} \int_\R \int_{x_1}^{x_2}
        \HK_x(\xi-y, t-\tau) \, \d \xi \, [pu](y,\tau) \, \d y \, \d \tau \,.
  \label{e.diff-int1}
\end{equation}
Now note that
\begin{align}
  \lvert \HK_x(\xi-y, t-\tau) \rvert
  & = \frac1{4 \sqrt\pi} \,
      \frac{\lvert \xi-y \rvert}{(t-\tau)^{3/2}} \,
      \e^{-\tfrac{(\xi-y)^2}{4(t-\tau)}}
      \notag \\
  & = \frac{\lvert \xi-y \rvert \, \sqrt{t-\tau+\delta}}%
           {2 \, (t-\tau)^{3/2}} \,
      \e^{-\tfrac{(\xi-y)^2 \, \delta}{4(t-\tau)(t-\tau+\delta)}} \,
      \frac1{\sqrt{4 \pi (t-\tau+\delta)}} \,
      \e^{-\tfrac{(\xi-y)^2}{4(t-\tau+\delta)}}
      \notag \\
  & = \frac1{\sqrt\delta} \,
      \biggl( 1 + \frac{\delta}{t-\tau} \biggr) \,
      \zeta \, \e^{-\zeta^2} \,
      K(\xi-y,t-\tau+\delta)
      \notag \\
  & \leq c(\delta) \, K(\xi-y,t-\tau+\delta) \,,
  \label{e.kx-estimate}
\end{align}
where we have defined
\begin{equation}
  \zeta = \lvert \xi - y \rvert \,
          \frac{\sqrt\delta}{2 \, \sqrt{t-\tau} \, \sqrt{t-\tau+\delta}}
\end{equation}
and, to obtain the final inequality in \eqref{e.kx-estimate}, note
that $\zeta \, \e^{-\zeta^2}$ is bounded and $t-\tau \geq \delta$.
Changing the order of integration in \eqref{e.diff-int1}, taking
absolute values, and inserting estimate \eqref{e.kx-estimate}, we
obtain
\begin{align}
  \lvert W_{[0,t-\delta]} \rvert
  & \leq c(\delta) \, \int_{x_1}^{x_2} \int_0^{t-\delta} \int_{\R}
           K(\xi-y,t-\tau+\delta) \, [pu](y,\tau) \, \d y \, \d \tau \, \d \xi
         \notag \\
  & \leq c(\delta) \, \int_{x_1}^{x_2} \int_0^{t+\delta} \int_{\R}
           K(\xi-y,t+\delta-\tau) \, [pu](y,\tau) \, \d y \, \d \tau \, \d \xi
         \notag \\
  & \leq c(\delta) \, \lvert x_2 - x_1 \rvert \,
         \sup_{\xi \in \R} \, \abs{w(\xi,t+\delta)} \,.
\end{align}
Since $w$ is bounded, we have obtained a uniform-in-time Lipschitz
estimate for $w$ on the first subinterval.

On the subinterval $[t-\delta,t]$, we use the boundedness of $pu$, so
that we can take out this contribution in the space-time $L^\infty$
norm,
\begin{equation}
  \lvert W_{[t-\delta,t]} \rvert
  \leq \int_{t-\delta}^t \int_\R
      \lvert \HK(x_2-y, t-\tau) - \HK(x_1-y, t-\tau) \rvert \,
      \d y \, \d \tau \, \lVert pu \rVert_{L^\infty} \,.
\end{equation}
Setting $r = (x_2-x_1)/2$ and changing variables $t-\tau \mapsto \tau$, we
obtain
\begin{align}
  \int_{t-\delta}^t \int_\R
      & \lvert \HK(x_2-y, t-\tau) - \HK(x_1-y, t-\tau) \rvert \, \d y \, \d \tau
      \notag \\
  & = \int_0^\delta
      \biggl(
        \erfc \Bigl(-\frac{r}{2 \sqrt \tau} \Bigr)
        - \erfc \Bigl(\frac{r}{2 \sqrt \tau} \Bigr)
      \biggr) \, \d \tau
      \notag \\
  & \leq \int_0^{1/4}
      \biggl(
        \erfc \Bigl(-\frac{r}{2 \sqrt \tau} \Bigr)
        - \erfc \Bigl(\frac{r}{2 \sqrt \tau} \Bigr)
      \biggr) \, \d \tau
      \notag \\
  & = \frac{r}{\sqrt \pi} \, \e^{-r^2}
      + \frac12 \, \erf (r) - r^2 \, (1- \erf (r))
      \notag \\
  & \leq c \, \lvert x_2 - x_1 \rvert \,,
  \label{e.estimate-int2}
\end{align}
where the last inequality is based on the observation that $\erf (r)$
is a smooth odd concave function and that $r \, (1-\erf (r))$ is
bounded.  This proves a uniform-in-time Lipschitz estimate for $w$ on
the second subinterval as well.  Since $\psi$ is uniformly Lipschitz
on $\R \times [T,\infty)$ by direct inspection, $u=\psi-w$ is
uniformly Lipschitz on the same domain.
\end{proof}


\begin{remark}
We note that the heat equation with arbitrary $L^\infty$ right-hand
side is not necessarily uniformly Lipschitz.  This can be seen by
observing that if we carry out the integration in
\eqref{e.estimate-int2} with arbitrary $\delta$, the constant $c$ will
be proportional to $\sqrt \delta$.  Thus, choosing $\delta=t$, thereby
eschewing the separate estimate for the first subinterval, we obtain a
Lipschitz constant which grows like $\sqrt t$.  Without recourse to the
particular features of the HHMO-model, this result is sharp, as can be
seen by taking the standard step function as right-hand function for
the heat equation.
\end{remark}

\section{Self-similar solution for self-similar precipitation}
\label{s.self-similar}

The computation of Section~\ref{s.without} can be extended to the case
when the precipitation term in $\eta$-$s$ coordinates does not have
any explicit dependence on $s$.  To do so, it is necessary that
precipitation is a function of the similarity variable $\eta$ only,
which requires that $q(\eta,s)=p(s \eta) = \gamma/(s\eta)^2$ for some
constant $\gamma>0$ which we treat as an unknown.  This means that we
disregard \eqref{e.hhmo-p-weak-alternative} which defines the
precipitation function in the original HHMO-model.  We also disregard
the requirement that $p \in [0,1]$ in the definition of the
generalized precipitation function \eqref{e.hhmo-p-weak}.  With these
provisions, the coefficients of the right hand side of \eqref{e.v} do
not depend on $s$. Therefore, as we shall show in the following,
steady states which we call \emph{self-similar solutions} indeed
exist, and we establish sufficient and necessary conditions for their
existence.

As before, we seek a stationary solution for \eqref{e.v}, which now
reduces to
\begin{subequations}
  \label{e.v-selfsimilar}
\begin{gather}
  \Phi'' + \frac\eta2 \, \Phi' + \frac{\alpha\beta}2 \, \delta(\eta-\alpha)
  - \frac\gamma{\eta^2} \, H(\alpha-\eta) \, \Phi = 0 \,,
  \label{e.ode2} \\
  \Phi(\eta) \to 0 \quad \text{as } \eta \to \infty \,,
  \label{e.v-selfsimilar-c} \\
  \Phi(\alpha) = u^* \,,
  \label{e.v-selfsimilar-d} \\
  \Phi'(0) = 0 \,. \label{e.v-selfsimilar-b}
\end{gather}
\end{subequations}
The additional internal boundary condition \eqref{e.v-selfsimilar-d}
models the observation that the HHMO-model drives the solution to the
critical value $u^*$ along the line $\eta=\alpha$.  As we will show
below, subject to a certain solvability condition, there will be a
unique pair $(\Phi,\gamma)$ solving this system.

We interpret the derivatives in \eqref{e.ode2} in the sense of
distributions, so that
\begin{gather}
  \Phi'(\eta) = \frac{\d \Phi}{\d \eta} + [\Phi(\alpha)] \, \delta_\alpha
  \label{e.vprime}
\intertext{and}
  \Phi''(\eta) = \frac{\d^2 \Phi}{\d \eta^2}
    + [\Phi'(\alpha)] \, \delta_\alpha + [\Phi(\alpha)] \, \delta'_\alpha \,,
  \label{e.vpprime}
\end{gather}
where $[\Phi(\alpha)] = \Phi(\alpha+)-\Phi(\alpha-)$ and $\d/\d \eta$
denotes the classical derivative where the function is smooth, i.e.,
on $(0,\alpha)$ and $(\alpha,\infty)$, and takes any finite value at
$\eta=\alpha$ where the classical derivative may not exist.  Inserting
\eqref{e.vprime} and \eqref{e.vpprime} into \eqref{e.ode2}, we obtain
\begin{multline}
  \frac{\d^2 \Phi}{\d \eta^2} + \frac\eta2 \, \frac{\d \Phi}{\d \eta}
  - \frac\gamma{\eta^2} \, H(\alpha-\eta) \, \Phi
  + \biggl(
      \frac{\alpha\beta}2 + \frac\eta2 \, [\Phi(\alpha)]
      + [\Phi'(\alpha)]
    \biggr) \, \delta_\alpha
  + [\Phi(\alpha)] \, \delta'_\alpha = 0 \,.
\end{multline}
Going from the most singular to the least singular term, we conclude
first that $[\Phi(\alpha)]=0$, i.e., that $\Phi$ is continuous across
the non-smooth point at $\eta=\alpha$.  Second, we obtain a jump
condition for the first derivative, namely
\begin{equation}
  [\Phi'(\alpha)] = - \frac{\alpha \beta}2 \,.
  \label{e.jump-condition}
\end{equation}

On the interval $(\alpha,\infty)$, we need to solve
\begin{subequations}
  \label{e.vupper}
\begin{gather}
  \Phi_\r'' + \frac\eta2 \, \Phi_\r' = 0 \,, \label{e.ode4} \\
  \Phi_\r(\eta) \to 0 \quad \text{as } \eta \to \infty \,.
\end{gather}
\end{subequations}
As in Section~\ref{s.without}, the solution to \eqref{e.vupper} is of
the form
\begin{subequations}
  \label{e.rightsolution}
\begin{equation}
  \Phi_\r(\eta) = C_2 \, \erfc(\eta/2)
\end{equation}
where, due to the internal boundary condition $\Phi(\alpha)=u^*$,
\begin{equation}
  C_2 = \frac{u^*}{\erfc (\frac\alpha2)} \,.
  \label{e.c2}
\end{equation}
\end{subequations}
Its derivative is given by
\begin{equation}
  \Phi_\r'(\eta) = -C_2 \, \frac{\exp(-\eta^2/4)}{\sqrt\pi} \,.
  \label{e.vupper-deriv}
\end{equation}

Similarly, on the interval $(0,\alpha)$, we need to solve
\begin{subequations}
\begin{gather}
  \Phi_\l'' + \frac\eta2 \, \Phi_\l' - \frac\gamma{\eta^2} \, \Phi_\l = 0 \,,
  \label{e.ode3} \\
  \Phi_\l'(0) = 0 \,.
\end{gather}
\end{subequations}
Equation \eqref{e.ode3} is a particular instance of the general
confluent equation \cite[Equation
13.1.35]{AbramowitzS:1972:HandbookMF}, whose solution is readily
expressed in terms of Kummer's confluent hypergeometric function $M$,
also referred to as the confluent hypergeometric function of the first
kind ${}_1\!F_1$.  The two linearly independent solutions are of the
form
\begin{subequations}
  \label{e.leftsolution}
\begin{equation}
  \Phi_\l(\eta) = C_1 \, \eta^\kappa \,
  M \Bigl(\frac\kappa2,\kappa+\frac12, -\frac{\eta^2}4 \Bigr)
  \label{e.vlowersol}
\end{equation}
where $\kappa (\kappa-1) = \gamma$ and, due to the internal boundary
condition $\Phi(\alpha)=u^*$,
\begin{equation}
  C_1 = \frac{u^*}{\alpha^\kappa
  M \bigl(\frac\kappa2,\kappa+\frac12,-\frac{\alpha^2}4 \bigr)} \,.
  \label{e.c1}
\end{equation}
\end{subequations}
Solving for $\kappa$, we find that of the two roots
\begin{equation}
  \kappa_{1,2}=\frac{1 \pm \sqrt{4\gamma+1}}2 \,,
  \label{e.kappa}
\end{equation}
only the larger one is positive, corresponding to regular behavior of
the solution \eqref{e.vlowersol} at the origin.  When
$\kappa_2+\frac12$ is not a negative integer, \eqref{e.leftsolution}
provides a second linearly independent solution with $\kappa=\kappa_2$
which we discard as it has a pole at $\eta=0$.  When
$\kappa_2+\frac12$ is a negative integer, Kummer's function is not
defined, so that we use the method of method of reduction of order,
see \cite[Section 3.4]{Teschl:2012:OrdinaryDE}, to obtain a second
linearly independent solution.  To do so, we assume that
$\Phi(\eta)=e(\eta) \, \Phi_\l(\eta)$ and obtain an equation for $e$,
\begin{equation}
  e''+ \Bigl(2 \, \frac{\Phi_\l'}{\Phi_\l} + \frac\eta2 \Bigr) \, e'
  = 0
\end{equation}
on $(0,\alpha]$.  Integrating, we obtain
\begin{subequations}
\begin{gather}
  e'(\eta) = C_e \, \Phi_\l^{-2}(\eta) \, \e^{-\frac{\eta^2}4} \,, \\
  e(\eta) = -C_e \int^{\alpha}_\eta \Phi_\l^{-2}(\zeta) \,
  \e^{-\frac{\zeta^2}4} \, \d\zeta + C_e^* \,,
\end{gather}
\end{subequations}
again on $(0,\alpha]$.  Hence, the general solution to \eqref{e.ode2}
on $(0,\alpha]$ is
\begin{equation}
  \Phi(\eta)
  = -C_e\,\Phi_\l(\eta)\,\int^{\alpha}_\eta \Phi_\l^{-2}(\zeta) \,
    \e^{-\frac{\zeta^2}4} \, \d\zeta + C_e^* \, \Phi_\l(\eta) \,.
  \label{e.vlowersol.pole}	
\end{equation}
To obtain a second linearly independent solution, it suffices to take
$C_e=1$ and $C_e^*=0$.  We proceed to show that the first term on the
right has again a pole at $\eta$.  Identity \cite[Equation
13.1.27]{AbramowitzS:1972:HandbookMF} reads
\begin{equation}
   M\Bigl(\frac{\kappa_1}2,\kappa_1+\frac12, -\frac{\eta^2}4 \Bigr)
   = \e^{-\frac{\eta^2}4} \,
     M\Bigl(\frac{\kappa_1}2+\frac12,\kappa_1+\frac12, \frac{\eta^2}4 \Bigr)
   > 0 \,.
\end{equation}
Due to \eqref{e.vlowersol}, we can find a positive constant $C$ such
that
\begin{equation}
  e(\eta)
  \le -C \int^{\alpha}_\eta \zeta^{-2\kappa_1} \, \d\zeta
  = - \frac{C}{2\kappa_1-1} \,
    \bigl( \eta^{-2\kappa_1+1}-\alpha^{-2\kappa_1+1} \bigr) \,.
\end{equation}
Therefore,
\begin{equation}
  \Phi(\eta)
  \le -\frac{C\,C_1}{2\kappa_1-1} \,
      \bigl( \eta^{-\kappa_1+1}-\alpha^{-2\kappa_1+1} \,
        \eta^{\kappa_1} \bigr) \,
      M \Bigl(\frac{\kappa_1}2,\kappa_1+\frac12, -\frac{\eta^2}4
        \Bigr) \,.
\end{equation}
Thus, the second linearly independent solution again has a pole at
$\eta=0$.  Therefore, we consider $\kappa=\kappa_1$ onward only.

Using the properties of Kummer's function
\cite[Section~13.4]{AbramowitzS:1972:HandbookMF}, the derivative of
\eqref{e.vlowersol} is readily computed as
\begin{equation}
  \Phi_\l'(\eta) = C_1 \, \kappa \, \eta^{\kappa-1} \,
  M \Bigl(\frac\kappa2+1,\kappa+\frac12, -\frac{\eta^2}4 \Bigr) \,.
  \label{e.vlowersold-deriv}
\end{equation}

Finally, we use the jump condition \eqref{e.jump-condition} to
determine the constant $\gamma$.  Plugging the left-hand and
right-hand solution into \eqref{e.jump-condition}, we find
\begin{equation}
  \label{eq.kappa}
  u^* \,
  \frac{\kappa \,
    M \bigl( \frac\kappa2+1,\kappa+\frac12, -\frac{\alpha^2}4 \bigr)}%
   {\alpha \,
    M \bigl( \frac\kappa2,\kappa+\frac12,-\frac{\alpha^2}4 \bigr)}
  + u^* \, \frac{\exp \bigl(-\frac{\alpha^2}4 \bigr)}%
                {\sqrt\pi \, \erfc (\frac\alpha2 )}
  = \frac{\alpha\beta}2 \,.
\end{equation}

\begin{figure}
\centering
\includegraphics{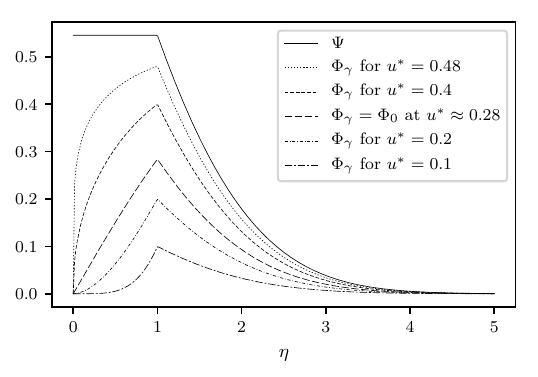}
\caption{Plot of $\Psi$ and of the family of profiles $\Phi_\gamma$
for different precipitation thresholds $u^*$.  The profiles in between
$\Psi$ and $\Phi_0$ correspond to the transitional regime where
$\gamma$ is negative, hence they fall outside
of the class of self-similar solutions described by
Theorem~\ref{t.selfsimilar}.  Solutions to the HHMO-model in the
transitional regime always converge to $\Phi_0$, not to $\Phi_\gamma$
with $\gamma<0$.}
\label{f.profiles}
\end{figure}

To proceed, we set
\begin{equation}
  \label{eq.kappa.2}
  u^*_\gamma =
  \left(\frac{\kappa \,
    M \bigl( \frac\kappa2+1,\kappa+\frac12, -\frac{\alpha^2}4 \bigr)}%
   {\alpha \,
    M \bigl( \frac\kappa2,\kappa+\frac12,-\frac{\alpha^2}4 \bigr)}
  + \, \frac{\exp \bigl(-\frac{\alpha^2}4 \bigr)}%
                {\sqrt\pi \, \erfc (\frac\alpha2 )}\right)^{-1}
  \frac{\alpha\beta}2
\end{equation}
and join the right-hand solution \eqref{e.rightsolution} and left-hand
solution \eqref{e.leftsolution} to define a family of functions,
parameterized by $\gamma$, by
\begin{gather}
  \Phi_\gamma(\eta)
  = \begin{dcases}
      \frac{u^*_\gamma\,\eta^\kappa \,
            M \bigl(\frac\kappa2,\kappa+\frac12, -\frac{\eta^2}4 \bigr)}
           {\alpha^\kappa \, M \bigl(\frac\kappa2,\kappa+\frac12,
            -\frac{\alpha^2}4 \bigr)} & \text{ if }\eta<\alpha \,, \\
      \frac{u^*_\gamma}{\erfc (\frac\alpha2)} \,
      \erfc \Bigl( \frac\eta2 \Bigr)
      & \text{ if }\eta\ge\alpha \,.
    \end{dcases}
  \label{e.phi.gamma}
\end{gather}
For future reference, we note that in $x$-$t$ coordinates, this
function takes the form
\begin{equation}
\label{e.phi.gamma.x-t}
   \phi_\gamma(x,t)=\Phi_\gamma(x/{\sqrt t}) \,.
\end{equation}
At this point, we know that each $\Phi_\gamma(\eta)$ satisfies the
differential equation \eqref{e.ode2} and the decay condition
\eqref{e.v-selfsimilar-c}.  However, $\Phi_\gamma$ does not
necessarily satisfy the internal boundary condition
\eqref{e.v-selfsimilar-d}, equivalent to the matching condition
\eqref{eq.kappa} which can now be expressed as $u^*_\gamma = u^*$, nor
does it necessarily satisfy the Neumann boundary condition
\eqref{e.v-selfsimilar-b}, which requires $\gamma>0$ or, equivalently,
$\kappa>1$.  The following theorem states a necessary and sufficient
condition such that \eqref{eq.kappa} can be solved for $\kappa>1$ or,
equivalently, $u^*_\gamma = u^*$ can be solved for $\gamma>0$.  When
this is the case, the resulting matched solution solves the entire
system \eqref{e.v-selfsimilar}.

\begin{theorem}
\label{t.selfsimilar}
Let $\alpha$, $\beta$, and $u^*$ be positive.  Then the matching
condition $u^*_\gamma = u^*$ has a unique solution satisfying
$\gamma>0$ if and only if $u^*<u_0^*$.  If this is the case, the
unique solution to \eqref{e.v-selfsimilar} is given by
\eqref{e.phi.gamma} with this particular value of $\gamma$.
\end{theorem}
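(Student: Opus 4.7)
My plan is to regard $u^*_\gamma$, defined in \eqref{eq.kappa.2}, as a function of $\gamma\in[0,\infty)$ via the bijection $\gamma=\kappa(\kappa-1)$ with $\kappa\ge 1$, and apply an intermediate value argument. I will show that $\gamma\mapsto u^*_\gamma$ is continuous and strictly decreasing on $[0,\infty)$, with $u^*_0>0$ and $u^*_\gamma\to 0$ as $\gamma\to\infty$; the intermediate value theorem then yields a unique $\gamma>0$ satisfying $u^*_\gamma=u^*$ if and only if $u^*<u^*_0$. For such $\gamma$, verifying that $\Phi_\gamma$ from \eqref{e.phi.gamma} solves \eqref{e.v-selfsimilar} is essentially tautological: equation \eqref{e.ode2} holds on each of $(0,\alpha)$ and $(\alpha,\infty)$ by construction, continuity across $\alpha$ by matching the two pieces at the value $u^*$, the jump condition \eqref{e.jump-condition} by the identity $u^*_\gamma=u^*$, the decay \eqref{e.v-selfsimilar-c} through the $\erfc$ factor, and the Neumann condition \eqref{e.v-selfsimilar-b} because $\kappa>1$.

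Write $F(\kappa)$ for the first summand in the denominator of \eqref{eq.kappa.2}. Using Kummer's transformation $M(a,b,-z)=\e^{-z}M(b-a,b,z)$,
\begin{equation*}
F(\kappa)=\frac{\kappa}{\alpha}\cdot\frac{M(\kappa/2-1/2,\,\kappa+1/2,\,\alpha^2/4)}{M(\kappa/2+1/2,\,\kappa+1/2,\,\alpha^2/4)} \,,
\end{equation*}
in which both Kummer functions are power series with strictly positive coefficients, hence positive and continuous in $\kappa\ge 1$. Bounding each Pochhammer quotient above by $1$ (possible since $\kappa/2\pm 1/2+k\le\kappa+1/2+k$) and below by positivity, I get $1\le M(\cdot,\kappa+1/2,\alpha^2/4)\le\e^{\alpha^2/4}$, so $F(\kappa)\ge\kappa/(\alpha\,\e^{\alpha^2/4})$, yielding $F(\kappa)\to\infty$ and $u^*_\gamma\to 0$ as $\gamma\to\infty$. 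Continuity of $u^*_\gamma$ on $[0,\infty)$ follows. The boundary value $u^*_0>0$ is computed directly from $M(a,a,z)=\e^z$ and $M(1/2,3/2,-x^2)=(\sqrt\pi/(2x))\erf(x)$.

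The main obstacle is strict monotonicity of $F$, which does not seem to follow cleanly from Kummer identities. I would argue instead via the ODE. Let $\tilde\Phi^\gamma$ denote the Frobenius solution \eqref{e.vlowersol} normalized by $\tilde\Phi^\gamma(\alpha)=1$, so that $F(\kappa(\gamma))=(\tilde\Phi^\gamma)'(\alpha^-)$ and $\tilde\Phi^\gamma(0)=0$ for every $\gamma\ge 0$ (including $\gamma=0$, where $\tilde\Phi^0(\eta)=\erf(\eta/2)/\erf(\alpha/2)$). Rewrite \eqref{e.ode3} in self-adjoint Liouville form $(\mu\,\Phi')'=(\gamma\mu/\eta^2)\,\Phi$ with $\mu(\eta)=\e^{\eta^2/4}$. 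For $0\le\gamma_1<\gamma_2$ set $D=\tilde\Phi^{\gamma_2}-\tilde\Phi^{\gamma_1}$; subtracting the equations yields
\begin{equation*}
(\mu D')'-(\gamma_1\mu/\eta^2)\,D=((\gamma_2-\gamma_1)\mu/\eta^2)\,\tilde\Phi^{\gamma_2}>0
\end{equation*}
on $(0,\alpha)$, with $D(0)=D(\alpha)=0$. The weak and strong maximum principles for the resulting Sturm-Liouville operator (with non-negative zeroth-order coefficient) imply $D<0$ on $(0,\alpha)$. If we had $D'(\alpha^-)=0$ as well, evaluating the equation at $\eta=\alpha$ and using $D(\alpha)=D'(\alpha^-)=0$ would give $\mu(\alpha)\,D''(\alpha^-)>0$, and a Taylor expansion would then force $D>0$ just to the left of $\alpha$, contradicting $D<0$. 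Hence $D'(\alpha^-)>0$, establishing strict monotonicity of $F$ and so of $u^*_\gamma$. Combined with the limits computed above, this closes the intermediate value argument and yields the theorem.
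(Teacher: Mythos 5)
Your proposal is correct and follows essentially the same route as the paper: existence via the intermediate value theorem after showing $u^*_\gamma\to 0$ (equivalently $F(\kappa)\to\infty$) as $\gamma\to\infty$, and uniqueness via a maximum-principle comparison of the normalized left-hand solutions for two values of $\gamma$, including the same boundary-strictness argument at $\eta=\alpha$. The only cosmetic differences are that you establish the large-$\kappa$ limit through Kummer's transformation and Pochhammer bounds where the paper uses dominated convergence on the power series, and you write the comparison ODE in self-adjoint Liouville form rather than directly.
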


\begin{remark} 
We recall that for a \emph{subcritical} precipitation threshold where
$u^* > \Psi(\alpha)$, no precipitation can occur and $\Psi$, defined
in \eqref{self.similar.p0}, provides a self-similar solution without
precipitation.  The \emph{marginal} case $u^* = \Psi(\alpha)$ is
discussed in Theorem~\ref{weak.sol.ustar.threshold.critical}.  In the
\emph{transitional} regime $u_0^* \leq u^* < \Psi(\alpha)$, there is
some $\gamma \leq 0$ so that \eqref{e.phi.gamma} still solves
(\ref{e.v-selfsimilar}a--c); however, $\gamma <0$ is nonphysical and
the Neumann condition \eqref{e.v-selfsimilar-b} cannot be satisfied in
this regime.  For future reference, we call the limiting case
$u^* = u_0^*$ the \emph{critical} precipitation threshold.  In this
case, \eqref{e.phi.gamma} takes the form
\begin{gather}
  \Phi_0(\eta)
  = \begin{dcases}
      \frac{u^*_0}{\erf (\frac\alpha2)} \, \erf (\frac\eta2)
      & \text{ if }\eta<\alpha \,, \\
      \frac{u^*_0}{\erfc (\frac\alpha2)} \, \erfc (\frac\eta2)
      & \text{ if } \eta \ge \alpha \,.
    \end{dcases}
  \label{e.phi0}
\end{gather}
As discussed, this is not a solution, but will emerge as the universal
asymptotic profile for solutions in the transitional regime.  Finally,
the \emph{supercritical} regime $u^*<u_0^*$ is the regime where
Theorem~\ref{t.selfsimilar} provides a self-similar solution to the
HHMO-model with self-similar precipitation function.  The profiles for
the different cases are summarized in Figure~\ref{f.profiles}.
\end{remark}

\begin{proof}[Proof of Theorem~\ref{t.selfsimilar}]
The form of the solution is determined by the preceding construction.
It remains to show that when $u^*<u_0^*$, the derivative matching
condition \eqref{eq.kappa} has a unique solution $\kappa>1$.  Let us
consider the left-hand solution \eqref{e.leftsolution} as a function
of $\eta$ and $\kappa$, which we denote by $v(\eta,\kappa)$, so that
the leftmost term in \eqref{eq.kappa} is $v_\eta(\alpha,\kappa)$.

We begin by noting that
\begin{equation}
  v_\eta(\alpha,1) = u^* \,
  \frac{
    M \bigl( \frac32 ,\frac32, -\frac{\alpha^2}4 \bigr)}%
   {\alpha \,
    M \bigl( \frac12, \frac32, -\frac{\alpha^2}4 \bigr)} \,.
\end{equation}
Moreover,
\begin{equation}
  \lim_{\kappa\to\infty}
    M \bigl(\tfrac\kappa2+1,\kappa+\tfrac12, -\tfrac{\alpha^2}4 \bigr)
  = \lim_{\kappa\to\infty}
    M \bigl( \tfrac\kappa2,\kappa+\tfrac12, -\tfrac{\alpha^2}4 \bigr)
  = \exp \bigl(-\tfrac{\alpha^2}8 \bigr) \,,
\end{equation}
as is easily proved by using the dominated convergence theorem on the
power series representation of Kummer's function.  Consequently,
$v_\eta(\alpha,\kappa)$ grows without bound as $\kappa \to \infty$.
Solvability under the condition that $u^*<u_0^*$ is then a simple
consequence of the intermediate value theorem.

To prove uniqueness, we show that $v_\eta(\alpha,\kappa)$ is strictly
monotonic in $\kappa$.  For fixed $\kappa_2>\kappa_1$, we define
\begin{equation}
  V(\eta) = v(\eta,\kappa_2) - v(\eta,\kappa_1) \,.
  \label{def.V}
\end{equation}

First, $v(\eta,\kappa_1)$ and $v(\eta,\kappa_2)$ satisfy the
differential equation \eqref{e.ode3} with respective constants
$\gamma_1<\gamma_2$.  Thus,
\begin{equation}
  V''(\eta) + \frac\eta2 \, V'(\eta)
  = \frac{\gamma_2}{\eta^2} \, V(\eta)
    + \frac{\gamma_2-\gamma_1}{\eta^2} \, v(\eta,\kappa_1) \,.
  \label{e.Vpp}
\end{equation}

We note that $V(0)=V(\alpha)=0$. Assume that $V$ attains a local
non-negative maximum at $\eta_0\in(0,\alpha)$. Then $V(\eta_0)\ge0$,
$V'(\eta_0)=0$, and $V''(\eta_0)\le0$.  This contradicts \eqref{e.Vpp}
as the left hand side is non-positive and the right hand side is
positive.  We conclude that $V$ is negative in the interior of
$[0,\alpha]$.

In particular, this means that $V'(\alpha) \geq 0$.  The proof is
complete if we show that this inequality is strict.  To proceed,
assume the contrary, i.e., that $V'(\alpha)=0$.  However, inserting
$V'(\alpha)=V(\alpha)=0$ into \eqref{e.Vpp}, we see that there must
exist a small left neighborhood of $\alpha$, $(\alpha_0,\alpha)$ say,
on which $V''$ is positive.  This implies that $V'$ is negative and
$V$ is positive on $(\alpha_0,\alpha)$, which is a contradiction.
\end{proof}

\begin{figure}
\centering
\includegraphics{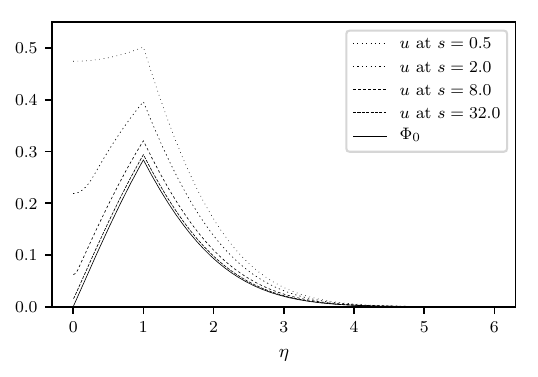}
\caption{Plot of the function $u$ for $\alpha=1.0$, $\beta=1.0$ in the
transitional regime with $u^*=0.49$ for different times $s$, together
with the conjectured limit profile $\Phi_0$.}
\label{f.1}
\end{figure}

\begin{figure}
\centering
\includegraphics{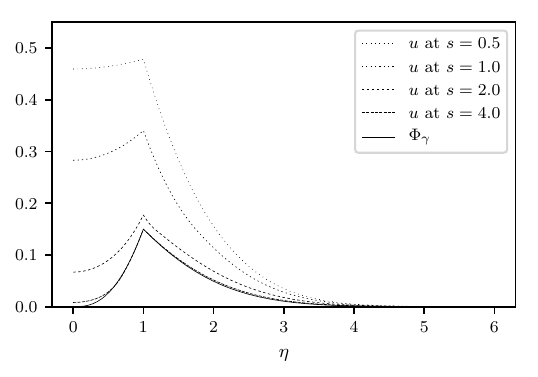}
\caption{Plot of the function $u$ for $\alpha=1.0$, $\beta=1.0$ in the
supercritical regime with $u^*=0.15$ for different times $s$, together
with the conjectured limit profile $\Phi_\gamma$.}
\label{f.2}
\end{figure}

\begin{figure}
\centering
\includegraphics{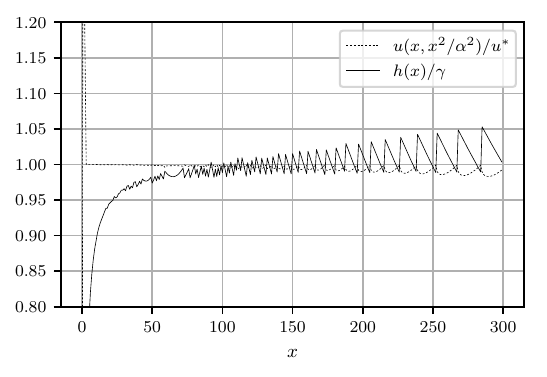}
\caption{Longer-time diagnostics in the supercritical regime.  Shown
are two quantities on the line $\eta=\alpha$ relative to their
conjectured limits for the simulation shown in Figure~\ref{f.2}.  The
growing oscillations are an effect of the finite constant grid size,
see text.}
\label{f.3}
\end{figure}

\section{Numerical results}
\label{s.numerics}

In the following, we present numerical evidence which suggests that
the profiles $\Phi_\gamma$ derived in the previous section determine
the long-time behavior of the solution to the HHMO-model.  As the
concentration is expected to converge uniformly in parabolic
similarity coordinates, it is convenient to formulate the numerical
scheme directly in $\eta$-$s$ coordinates.  We use simple implicit
first-order time-stepping for the concentration field and direct
propagation of the precipitation function along its characteristic
lines $x=\text{const}$ which transform to hyperbolic curves in the
$\eta$-$s$ plane.  Details of the scheme are provided in
Appendix~\ref{a.scheme}.

The observed behavior is different in the transitional and in the
supercritical regime.  In the transitional regime, the source term is
too weak to maintain precipitation outside of a bounded region on the
$x$-axis, which transforms into a precipitation region which gets
squeezed onto the $s$-axis as time progresses in $\eta$-$s$
coordinates.  In this regime, the asymptotic profile is always
$\Phi_0$; a particular example is shown in Figure~\ref{f.1}.  Note
that the concentration peak drops well below the precipitation
threshold as time progresses.

Figure~\ref{f.2} shows the long-time behavior of the concentration in
the supercritical case.  In this case, the limit profile is
$\Phi_\gamma$, where $\gamma$ is determined as a function of $\alpha$,
$\beta$, and $u^*$ by the solvability condition of
Theorem~\ref{t.selfsimilar}.  The convergence is very robust with
respect to compactly supported changes in the initial condition (not
shown).  We note that the evolution equation in $\eta$-$s$ coordinates
is singular at $s=0$, so the initial value problem is only
well-defined when the initial condition is imposed at some $s_0>0$.
For the numerical scheme, however, there is no problem initializing at
$s=0$.

Along the line $\eta=\alpha$, equivalent to the parabola
$t = x^2/\alpha^2$, on which the source point moves, the concentration
is converging toward the critical concentration $u^*$.  At the same
time, the weighted average of the concentration,
\begin{equation}
  h(x) = \frac1x \int_0^x \xi^2 \, p^*(\xi) \, \d \xi \,,
\end{equation}
is converging to $\gamma$ as $t \to \infty$ or, equivalently,
$x \to \infty$.  This behavior is clearly visible in Figure~\ref{f.3},
where convergence in $h$ is much slower than convergence in $u$.
Figure~\ref{f.3} also shows that grid effects become increasingly
dominant as time progresses.  This is due to the fact that
precipitation occupies at least one full grid cell on the line
$\eta=\alpha$.  However, to be consistent with the conjectured
asymptotics, the temporal width of the precipitation region needs to
shrink to zero.  In the discrete approximation, it cannot do this,
resulting in oscillations of the diagnostics with increasing
amplitude.  For even larger times, the simulation eventually breaks
down completely.  This behavior can be seen as a manifestation of
``rattling,'' described by Gurevich and Tikhomirov
\cite{GurevichT:2017:RattlingSD,GurevichT:2018:SpatiallyDR} in a
related setting.  Here, the scaling of the problem and of the
computational domain leads to an increase of the rattling amplitude
with time.

On any fixed finite interval of time, the amplitude of the grid
oscillations vanishes as the spatial and temporal step sizes go to
zero.  However, it is impossible to design a code in which this
behavior is uniform in time so long as the precipitation function
takes only binary values, i.e., strictly follows condition
\eqref{e.hhmo-p-weak-alternative}.

\section{Long-time behavior of a linear auxiliary problem}
\label{s.auxiliary}

In this section, we study the nonautonomous linear system
\begin{subequations}
  \label{e.auxiliary}
\begin{gather}
  u_t = u_{xx} +
        \frac{\alpha \beta}{2 \sqrt t} \, \delta (x - \alpha \sqrt{t})
        - p \, u \,,
  \label{e.auxiliary.a} \\
  u_x(0,t) = 0 \quad \text{for } t > 0 \,, \label{e.auxiliary.b} \\
  u(x,0) = 0 \quad \text{for } x>0 \label{e.auxiliary.c}
\end{gather}
\end{subequations}
on the space-time domain $\R_+\times\R_+$.  The equations coincide
with the HHMO-model (\ref{e.original.a}--c).  Here, however, we
consider the precipitation function $p(x,t)$ as given, not necessarily
related to $u$ in any way.  The goal of this section is to give
conditions on $p$ such that the solution $u$ converges uniformly in
parabolic similarity coordinates to one of the profiles $\Phi_\gamma$
defined in Section~\ref{s.self-similar}. 

Throughout, we assume that $p \in \mathcal A$, where
\begin{multline}
  \mathcal A
  = \{ p\in L^1_\loc((0,\infty) \times [0,\infty)) \colon \\
       \supp p\cap(\R_+\times[0,T])
       \text{ is compact for every } T>0 \} \,.
  \label{e.admissible}
\end{multline}
In addition, we will assume that $p$ is non-zero, non-negative,
non-decreasing in time, and satisfies condition (P) stated in the
introduction.  In all of the following, we manipulate the equation
formally as if the solution was strong.  A detailed verification that
all steps are indeed rigorous can be found in
\cite[Appendix~B]{Darbenas:2018:PhDThesis}; these result can be
transformed into similarity variables as in Appendix~\ref{a.weak}
below.  In this context, the condition on the support of $p$ in
\eqref{e.admissible} eases the justification of the exchange of
integration and time-differentiation.  More generality is clearly
possible, but this simple assumption covers all cases we need for the
purpose of this paper.

For technical reasons, we distinguish two cases which require
different treatment.  In the first case, $p$ is assumed bounded.  It
is then easy to show that there exists a weak solution
\begin{gather}
  u - \psi \in C^{1,0}(\R_+\times \R_+)\cap L^\infty(\R_+\times\R_+) \,,
  \label{e.weak-linear-class} 
\end{gather}
satisfying \eqref{weak.sol.def.eq}, where $\psi$ is the solution of
the precipitation-less equation given by \eqref{self.similar.p0}; see,
e.g., \cite[Appendix B]{Darbenas:2018:PhDThesis}.

In the second case, $p$ may be unbounded.  In general, the existence
of solutions is then not obvious, so that we assume a solution with
\begin{gather}
  u - \psi \in C^{1,0}(\R_+ \times \R_+)\cap
  W^{1,1}_{2,\loc}(\R_+ \times \R_+)
  \label{e.weak-linear-class-unbounded} 
\end{gather}
exists, and that this solution satisfies the bound
\begin{gather}
  0\le u\le \phi_0
  \quad \text{provided} \quad
  \int_0^{\infty} p^*(x) \, \d x= \infty \label{u.bounded.infty}
\end{gather}
with $\phi_0$ given by \eqref{e.phi.gamma.x-t}, or
\begin{gather}
  0\le u\le \psi
  \quad \text{provided} \quad
  \int_0^{\infty} p^*(x) \, \d x < \infty \,. \label{u.bounded}
\end{gather}
We remark that when $p$ is bounded, it is easy to prove that solutions
$u$ which decay as $x \to \infty$ satisfy the weaker bound
\eqref{u.bounded}.

\begin{remark}
\label{imposing.the.bound} 
Here we will explain why we impose \eqref{u.bounded.infty}.
Proceeding formally, we fix $0<t_0<t_1$ and
$0< x_1< \alpha \sqrt{t_0}$, multiply \eqref{e.auxiliary.a} by $u$,
integrate over $[0,x_1] \times [t_0,t_1]$, and note that the domain of
integration is away from the location of the source, so that
\begin{align}
  \int_{0}^{x_1} u^2 \, \d x \bigg|^{t=t_1}_{t=t_0}
  & = 2 \int_{t_1}^{t_2}u_x \, u \, \d t \bigg|_{x=0}^{x=x_1}
      \notag \\
  & \quad 
    - 2 \int_{t_0}^{t_1} \int_{0}^{x_1} u_x^2 \, \d x \, \d t
    - 2 \int_{0}^{x_1} p^*(x) \int_{t_0}^{t_1} u^2 \, \d t \, \d x \,.
\end{align}
As $u$ and $u_x$ are continuous on the domain of integration, the
first three integrals are finite.  Thus, the last integral must be
finite, too.  When $p^*$ is not integrable near zero, this can only be
true when $u(0,t)=0$ for all $t>0$.  Now note that $\phi_0$ satisfies
\eqref{e.auxiliary.a} for $p\equiv0$ with Dirichlet boundary
conditions
\begin{subequations}
\begin{gather*}
  \phi_0(0,t) = 0 \quad \text{for } t > 0 \,,\\
  \phi_0(x,0) = \lim_{\substack{y\to x\\t\searrow0}} \phi_0(y,t) = 0
  \quad \text{for } x > 0 \,,
\end{gather*}
\end{subequations}
Thus, $\phi_0$ is the natural supersolution for $u$ when $p^*$ is not
integrable. 
\end{remark}

\begin{lemma}
\label{lemma.p.non-decreasing}
Let $p\in \mathcal A$ be non-negative and non-decreasing in time $t$.
Let $u$ be a weak solution to \eqref{e.auxiliary}.  Then $u-\psi$ is
non-increasing in time $t$.
\end{lemma}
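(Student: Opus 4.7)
My plan is to establish the finite-difference form of the monotonicity: for every $h > 0$, $w(x, t+h) \le w(x,t)$ where $w = u - \psi$. A preliminary observation is that $\psi$ itself is non-decreasing in $t$ for each fixed $x > 0$: since $\psi(x, t) = \Psi(x/\sqrt{t})$ and $\Psi$ is non-increasing on $\R_+$ by \eqref{self.similar.p0}, the map $t \mapsto \psi(x, t)$ is the composition of the decreasing function $t \mapsto x/\sqrt t$ with the non-increasing function $\Psi$, hence non-decreasing in $t$.

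Write $\tilde u(x,t) = u(x,t+h)$, $\tilde \psi(x,t) = \psi(x,t+h)$, $\tilde p(x,t) = p(x,t+h)$, $\tilde w = \tilde u - \tilde\psi$, and $W = \tilde w - w$. Subtracting the PDE for $\psi$ from that for $u$ cancels the $\delta$-source, leaving $w_t - w_{xx} = -pu$ and likewise $\tilde w_t - \tilde w_{xx} = -\tilde p\, \tilde u$. Using the identity $\tilde u - u = W + (\tilde\psi - \psi)$, taking the difference of the two equations yields the linear parabolic equation
\begin{equation*}
  W_t - W_{xx} + \tilde p\, W = -\tilde p\,(\tilde\psi - \psi) - (\tilde p - p)\, u.
\end{equation*}
The right-hand side is non-positive: $\tilde\psi - \psi \ge 0$ by the preliminary observation and $\tilde p \ge 0$ by hypothesis give the first term; $\tilde p - p \ge 0$ by the assumed monotonicity of $p$ and $u \ge 0$ give the second. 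The initial datum is $W(x,0) = u(x,h) - \psi(x,h) \le 0$, since $u \le \psi$: in the bounded-$p$ case this follows by a standard supersolution argument, while in the unbounded case it is assumption \eqref{u.bounded} directly, or follows from \eqref{u.bounded.infty} together with the pointwise comparison $\phi_0 \le \psi$, which is verifiable from the explicit formulas using $u^*_0 \le \Psi(\alpha)$. The homogeneous Neumann condition $W_x(0,t) = 0$ is inherited, and $W$ decays as $x \to \infty$.

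The conclusion $W \le 0$ now follows from the parabolic weak maximum principle for a linear equation with non-negative zeroth-order coefficient, non-positive forcing, and non-positive initial/boundary data. The main technical obstacle is that $\tilde p$ may be unbounded, so off-the-shelf comparison principles do not apply. This can be resolved either by truncation, setting $p_n = \min(p, n)$, obtaining monotonicity for each bounded-coefficient problem, and passing to the limit using the a priori bound \eqref{u.bounded.infty}, or by a direct energy argument: testing the equation against the positive part $W_+$ and integrating by parts using the Neumann condition yields
\begin{equation*}
  \tfrac{1}{2}\,\partial_t \lVert W_+ \rVert_{L^2}^2 + \lVert \partial_x W_+ \rVert_{L^2}^2 + \int_{\R_+} \tilde p\, W_+^2 \,\d x \le 0,
\end{equation*}
and since $W_+(\cdot, 0) = 0$, this forces $W_+ \equiv 0$. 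Both routes are compatible with the regularity in \eqref{e.weak-linear-class-unbounded}, so that $W \le 0$ holds pointwise, which is exactly the claim.
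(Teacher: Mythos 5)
Your argument is correct, and it is essentially the argument the paper relies on: the paper's own ``proof'' is a one-line citation to Lemma~3.3 of Hilhorst \emph{et al.}, whose proof is exactly this time-shift comparison (write the equation for $W(x,t)=[u-\psi](x,t+h)-[u-\psi](x,t)$, observe that the forcing $-\tilde p\,(\tilde\psi-\psi)-(\tilde p-p)\,u$ is non-positive because $\psi$ is non-decreasing in $t$, $p$ is non-decreasing in $t$, and $0\le u\le\psi$, and conclude by the comparison principle with zero initial data deficit). You have in effect reconstructed that proof self-containedly, including the two points the citation glosses over in the present setting -- the sign of $u$ and the handling of unbounded $p$ via truncation or the $W_+$ energy identity -- which is consistent with the paper's stated convention of manipulating the weak formulation formally.
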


\begin{proof}
The proof of \cite[Lemma~3.3]{HilhorstHM:2009:MathematicalSO} applies
literally.  We remark that the result in
\cite{HilhorstHM:2009:MathematicalSO} is stated for solutions to the
HHMO-model, but its proof depends only on the assumption that $p$ is
non-decreasing in $t$ and applies here as well.
\end{proof}

\begin{lemma}
\label{l.u0}
Suppose $p \in \mathcal A$ is non-zero, nonnegative, and satisfies
condition \textup{(P)}.  Let $u$ be a weak solution to
\eqref{e.auxiliary}.  Then $u(0,t) \to 0$ as $t \to \infty$.
\end{lemma}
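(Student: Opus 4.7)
The plan is to show $u(0, t)$ has a non-negative limit by monotonicity, and then rule out a positive limit by a Duhamel--Fatou argument combined with parabolic comparison. By Lemma~\ref{lemma.p.non-decreasing}, $u - \psi$ is non-increasing in $t$. Since $\psi(0, t) = \Psi(\alpha)$ is constant in $t$, $u(0, t)$ itself is non-increasing and non-negative, so $L := \lim_{t \to \infty} u(0, t)$ exists in $[0, \Psi(\alpha)]$. More generally, because $\psi(x, t) = \Psi(\alpha)$ whenever $t \geq x^2/\alpha^2$, for every $x > 0$ the map $t \mapsto u(x, t)$ is non-increasing on $[x^2/\alpha^2, \infty)$ and admits a pointwise limit $u_\infty(x) \geq 0$. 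I would then assume for contradiction that $L > 0$.

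The crucial step is the Duhamel formula at $x = 0$.  Using the even reflection so that the Neumann heat kernel becomes $2 K(y, \sigma)$, and substituting $\sigma = t - \tau$, one obtains
\begin{equation*}
  \Psi(\alpha) - u(0, t)
  = 2 \int_0^t \int_0^\infty K(y, \sigma) \, p(y, t - \sigma) \, u(y, t - \sigma) \, \d y \, \d \sigma.
\end{equation*}
The left-hand side tends to $\Psi(\alpha) - L < \infty$.  For each fixed $\sigma > 0$, condition~(P) yields $p(y, t - \sigma) \to p^*(y)$ a.e.\ in $y$, and monotonicity in $t$ (which gives $p \leq p^*$) together with $u \leq \psi \leq \Psi(\alpha)$ and the compact support of $p$ on time slabs inherent to $\mathcal{A}$ provides a dominating function, so the inner integral converges pointwise to $F_\infty(\sigma) := \int_0^\infty K(y, \sigma) p^*(y) u_\infty(y) \, \d y$.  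Fatou's lemma then gives $F_\infty \in L^1(\d \sigma)$.  The Gaussian lower bound $K(y, \sigma) \geq e^{-1/4} (4 \pi \sigma)^{-1/2}$, valid for $\sigma \geq R^2$ and $y \in [0, R]$, shows that if $\int_0^R p^* u_\infty \, \d y > 0$ for some $R$, then $F_\infty(\sigma) \geq C \sigma^{-1/2}$ for large $\sigma$, contradicting integrability.  Hence $u_\infty = 0$ a.e.\ on $\{p^* > 0\}$.

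Because $p$ is non-zero, the set $\{p^* > 0\}$ has positive Lebesgue measure, so one can pick $b > 0$ with $u_\infty(b) = 0$.  Then $u(b, t) \searrow 0$, and given $\epsilon > 0$ there exists $T_\epsilon \geq b^2/\alpha^2$ with $u(b, t) \leq \epsilon$ for $t \geq T_\epsilon$.  On the rectangle $[0, b] \times [T_\epsilon, \infty)$ the source lies outside $[0, b]$ and the reaction $-p u \leq 0$, so $u_t \leq u_{xx}$.  Comparing with the solution $\bar u$ of the heat equation on this rectangle with $\bar u_x(0, \cdot) = 0$, $\bar u(b, \cdot) = \epsilon$, and $\bar u(\cdot, T_\epsilon) = u(\cdot, T_\epsilon)$, the parabolic comparison principle yields $u \leq \bar u$; standard Fourier analysis shows $\bar u$ converges uniformly to the steady state $\bar u \equiv \epsilon$, so $\limsup_{t \to \infty} u(0, t) \leq \epsilon$.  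Letting $\epsilon \to 0$ contradicts $L > 0$.

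The principal obstacle is the Duhamel--Fatou step: $p$ is not assumed bounded, so producing a workable dominating function and justifying pointwise convergence of the inner integral leans essentially on $p \in \mathcal{A}$ and on $p \leq p^*$ from time-monotonicity; the asymptotic analysis of $F_\infty$ via the Gaussian lower bound is then short.  Once $u_\infty = 0$ on $\{p^* > 0\}$ is established, the comparison argument on $[0, b] \times [T_\epsilon, \infty)$ is classical.
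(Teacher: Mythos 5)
Your argument is sound in outline and reaches the right conclusion, but it takes a genuinely different route from the paper and one step needs repair. The paper never works with $u$ and the possibly unbounded $p$ directly: it truncates, setting $p^r=\min\{p^*,1\}$ on a compact spatial interval (switched on at $t=x^2/\alpha^2$), lets $u^r\ge u$ be the corresponding bounded supersolution, shows via Duhamel and the $\sqrt t$-growth of $\int_{t^*}^t \HK(y^*,t-\tau)\,\d\tau$ that $\inf_{[-y^*,y^*]}u^r(\cdot,t)\to0$, and then upgrades control of the infimum to the supremum by the parabolic Harnack inequality. You instead extract pointwise limits $u_\infty(x)$ from the monotonicity of $u-\psi$, run Duhamel--Fatou at $x=0$ to force $p^*u_\infty=0$ a.e.\ (the same $\sigma^{-1/2}$ non-integrability mechanism as the paper's $\sqrt t$ growth), and then propagate $u_\infty(b)=0$ to $x=0$ by comparison with the heat equation on $[0,b]$ in place of Harnack. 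Both mechanisms are the same at heart; the paper's truncation buys freedom from all integrability questions and from any monotonicity assumption on $p$ itself, while your version yields slightly more information, namely vanishing of $u_\infty$ on all of $\supp p^*$.

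Two points need attention. First, the dominating function you propose for the inner $y$-integral, essentially $\HK(y,\sigma)\,p^*(y)\,\Psi(\alpha)$, need not be integrable: $\mathcal A$ only requires $p\in L^1_\loc((0,\infty)\times[0,\infty))$, and the paper explicitly allows $p^*\sim\gamma/x^2$ near the origin (this is precisely why the bound \eqref{u.bounded.infty} is introduced). So the dominated-convergence justification of the pointwise limit of the inner integral fails in general. The fix is cheap: since the integrand is non-negative, apply Fatou directly to the double integral (or iterate Fatou in $y$ and then in $\sigma$) to obtain $\int_0^\infty\int_0^\infty \HK(y,\sigma)\,p^*(y)\,u_\infty(y)\,\d y\,\d\sigma\le\Psi(\alpha)-L<\infty$, which is all the Gaussian-lower-bound step requires. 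Second, you invoke Lemma~\ref{lemma.p.non-decreasing} for $u$ itself, and the inequality $p\le p^*$; both require $p$ to be non-decreasing in time. That is a standing assumption of Section~\ref{s.auxiliary} but is not among the hypotheses listed in the lemma, and the paper's proof is arranged so as not to need it (only the constructed $p^r$ is monotone by design). As written, your proof therefore establishes a slightly less general statement; within the section's standing assumptions it is fine. The final comparison step on $[0,b]\times[T_\eps,\infty)$ is correct: for $t>T_\eps\ge b^2/\alpha^2$ the source sits at $x=\alpha\sqrt t>b$, so $u$ is a subsolution of the heat equation there and the constant steady state $\eps$ dominates in the limit.
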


\begin{remark}
This lemma can be applied to weak solutions of the HHMO-model
\eqref{e.original} provided $u^*<\Psi(\alpha)$ under the additional
assumption that \eqref{p.property} is satisfied.  Then, by
\cite[Lemma~3.5]{HilhorstHM:2009:MathematicalSO}, there is at least
one non-degenerate precipitation region and the assumptions of the
lemma apply.
\end{remark}

\begin{proof}
We construct a supersolution to $u$ as follows.  Fix any $y^*>0$ such
that the support of $p^*$ intersects $[0,y^*]$ on a set of positive
measure.  Define $t^*=(y^*/\alpha)^2$ and
\begin{equation}
  p^r(x,t)
  = \begin{cases}
      \min\{p^*(\abs{x}), 1\}
      & \text{if } x\in[-y^*,y^*] \text{ and } t\ge x^2/\alpha^2 \,, \\
      0 & \text{otherwise} \,. \\
    \end{cases}
  \label{e.pr}
\end{equation}
Let $u^r$ denote the unique bounded weak solution to
\eqref{e.auxiliary} with $p=p^r$ and extend $u^r$ to the left half-plane by even reflection.  Due to
the subsolution principle, $0\le u\le u^r$.  Our goal is to show that
$u^r(0,t)\to 0$ as $t\to\infty$.  We reflect $u^r$ evenly with respect
to $x=0$ axis.  Note that $p^r$ fulfills the conditions of
Lemma~\ref{lemma.p.non-decreasing}.  Therefore, $u^r$ is
non-increasing in $t$ on $[-y^*,y^*]\times[t^*,\infty)$ so that
\begin{equation}
\label{C.bar}
  \lim_{t \to \infty} \inf_{x\in[-y^*,y^*]}u^r(x,t) \equiv c
\end{equation}
exists.  We now express $u^r(0,t)$ for $t>t^*$ via the Duhamel
formula, bound $u^r$ from below by $c$, note that $\HK(-y,t-s)$ is a
decreasing function in $y$, and recall that $p^r$ is supported on
$\{\tau \geq t^*\}$ to estimate
\begin{align}
   u^r(0,t)
   & = \psi(0,t)-\int_0^t\int_{-y^*}^{y^*} \HK(-y,t-\tau) \,
         p^r(y) \,  u^r(y,\tau) \, \d y \, \d \tau
       \notag \\
   & \le \Psi(0)- c \int_{-y^*}^{y^*} p^r(y) \,\d y
       \int_{t^*}^t \HK(y^*,t-\tau) \, \d \tau \,.
   \label{e.ur-estimate}
\end{align}
Changing variables $\tau\to t\tau^\prime$ in the second integral on the
right, we find that
\begin{align}
  \int_{t^*}^t \HK(y^*,t-\tau) \, \d \tau
  & = \sqrt t \int_{t^*/t}^1
      \frac1{\sqrt{4\pi(1-\tau^\prime)}} \,
      \e^{-\tfrac{{y^*}^2}{4t(1-\tau^\prime)}}\,\d \tau^\prime
    \notag \\
  & \sim \sqrt t \int_0^1\frac1{\sqrt{4\pi(1-\tau^\prime)}}\,\d \tau^\prime
    = \sqrt{\frac{t}\pi}
\end{align}
as $t \to \infty$.  This implies $c=0$ as otherwise
$u^r(0,t)\to-\infty$ as $t\to\infty$.  Then the Harnack inequality for
the function $u^r$ on some spatial domain containing the interval
$[-y^*,y^*]$ implies that for any fixed $\delta>0$ there exists a constant $C_\delta>0$ such that
\begin{equation}
\label{u.conv}
  u^r(0,t)
  \le \sup_{y\in[-y^*,y^*]}u^r(y,t)
  \le C_\delta \, \inf_{y\in[-y^*,y^*]} u^r(y,t+\delta)
  \to 0 \text{ as } t \to \infty \,;
\end{equation}
see, e.g., \cite[Section~7.1.4.b]{Evans:2010:PartialDE} and
\cite{Lieberman:1996:SecondOP}.  Hence, $u(0,t)\to0$ as well.
\end{proof}

\begin{lemma}
\label{peak.lemma}
Let $p \in \mathcal A$ be non-negative and non-decreasing in time $t$.
Let $u$ a bounded weak solution to \eqref{e.auxiliary} where, as
before, we write $u(x,t) = v(x/\sqrt{t},\sqrt{t})$.  Then for every
$d>0$ and $\gamma\ge0$, the following is true.
\begin{enumerate}[label=\textup{(\alph*)}]

\item\label{peak.i} There exists $\omega\in(0,1)$ such that for every
$(\eta,s)$ with $v(\eta, s)-\Phi_\gamma(\eta)\ge d$,
\begin{equation}
  \min_{s'\in[\omega s,s]} \max_{\eta\in\R_+}
  \{ v(\eta, s')-\Phi_\gamma(\eta)\} \ge d/2 \,.
   \label{e.peak.i}
\end{equation}

\item\label{peak.ii} There exists $\omega\in(1,\infty)$ such that
for every $(\eta,s)$ with $v(\eta, s)-\Phi_\gamma(\eta)\le -d$,
\begin{equation}
  \max_{s'\in[s,\omega s]} \min_{\eta\in\R_+}
 \{v(\eta, s')-\Phi_\gamma(\eta) \}\le -d/2 \,.
\end{equation}
\end{enumerate}
\end{lemma}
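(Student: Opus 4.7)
The heart of the proof is to exploit the time-monotonicity of $u-\psi$ along the physical-space characteristics $x=\text{const}$ established in Lemma~\ref{lemma.p.non-decreasing}. In similarity coordinates, the line $x=x_0$ becomes the hyperbola $\eta s = x_0$, and the monotonicity reads: whenever $\eta_1 s_1 = \eta_2 s_2$ with $s_1 \le s_2$ (so $\eta_1 \ge \eta_2$),
\begin{equation*}
  v(\eta_1, s_1) - \Psi(\eta_1) \ge v(\eta_2, s_2) - \Psi(\eta_2).
\end{equation*}
Combined with the facts that $v$ is sandwiched between $0$ and $\Psi$ and that both $\Psi$ and $\Phi_\gamma$ are globally Lipschitz with a common constant $L=L(\alpha,\beta,\gamma,u^*)$ and decay to zero at infinity (all of this is immediate from the explicit formulas of Sections~\ref{s.without}--\ref{s.self-similar}), this one-sided monotonicity along hyperbolae is enough to transfer an excursion at a single $(\eta_0,s_0)$ to a whole range of nearby times.

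For part~\ref{peak.i}, the plan is to fix $(\eta_0, s_0)$ with $v(\eta_0, s_0) - \Phi_\gamma(\eta_0) \ge d$ and set $x_0 = \eta_0 s_0$. Since $v \le \Psi$ and $\Phi_\gamma \ge 0$, this forces $\Psi(\eta_0) \ge d$; because $\Psi(\eta)\to 0$ as $\eta\to\infty$, we obtain a bound $\eta_0 \le \eta_{\max}(d)$ depending only on $d$ and the problem parameters. For any $s' \in [\omega s_0, s_0]$ with $\omega\in(0,1)$ to be chosen, the conjugate point $\eta' = x_0/s'$ satisfies $\eta'\ge\eta_0$ and $\eta' - \eta_0 = \eta_0(s_0-s')/s' \le \eta_{\max}(d)\,(1-\omega)/\omega$. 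Applying the hyperbolic monotonicity at $(\eta',s')$ and $(\eta_0,s_0)$ gives
\begin{equation*}
  v(\eta', s') - \Phi_\gamma(\eta')
  \ge d - \bigl|\Psi(\eta_0) - \Psi(\eta')\bigr|
         - \bigl|\Phi_\gamma(\eta_0) - \Phi_\gamma(\eta')\bigr|
  \ge d - 2L\,(\eta'-\eta_0).
\end{equation*}
Choosing $\omega$ close enough to $1$ so that $2L\,\eta_{\max}(d)(1-\omega)/\omega \le d/2$---a choice depending only on $d$ and the problem parameters---makes the right-hand side at least $d/2$; taking the supremum in $\eta$ then yields~\eqref{e.peak.i}.

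Part~\ref{peak.ii} is the mirror image. From $v \ge 0$ and $v(\eta_0,s_0)\le \Phi_\gamma(\eta_0)-d$ one deduces $\Phi_\gamma(\eta_0) \ge d$, and since $\Phi_\gamma \to 0$ at infinity, $\eta_0$ again lies in a bounded set. For $s'\in[s_0,\omega s_0]$ with $\omega>1$, the conjugate $\eta' = x_0/s' \le \eta_0$ satisfies $\eta_0-\eta' \le \eta_{\max}(d)\,(\omega-1)/\omega$, and the hyperbolic monotonicity flips sign to give
\begin{equation*}
  v(\eta',s')-\Phi_\gamma(\eta') \le -d + 2L\,(\eta_0-\eta');
\end{equation*}
choosing $\omega>1$ close enough to $1$ makes this $\le -d/2$, and taking the infimum over $\eta$ finishes the proof.

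I do not anticipate a serious obstacle: the argument reduces to the interplay between the hyperbolic time-monotonicity and the Lipschitz/decay properties of $\Psi$ and $\Phi_\gamma$. The one point requiring care is that $\omega$ must be chosen uniformly in the extremizer $(\eta_0,s_0)$; this is made possible precisely by the uniform bound $\eta_0\le\eta_{\max}(d)$ together with the \emph{global} Lipschitz constant $L$, both of which are constants depending only on $d$, $\alpha$, $\beta$, $\gamma$, and $u^*$.
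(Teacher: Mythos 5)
Your proof is correct and follows essentially the same route as the paper's: both rest on the time-monotonicity of $u-\psi$ (Lemma~\ref{lemma.p.non-decreasing}) transported along the hyperbolae $\eta s = \mathrm{const}$, together with a uniform bound on the location of the excursion and control of $\Psi-\Phi_\gamma$ over the short interval between $\eta_0$ and its conjugate point. The only cosmetic difference is that the paper estimates the single decreasing function $V=\Psi-\Phi_\gamma$ via the mean value theorem where you invoke a common global Lipschitz constant for $\Psi$ and $\Phi_\gamma$ separately.
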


\begin{proof}
Set $V(\eta)=\Psi(\eta)-\Phi_\gamma(\eta)$.  By direct inspection, we
see that $V$ is strictly decreasing on $\R_+$.  In case \ref{peak.i},
\begin{equation}
  d \le v(\eta, s)-\Phi_\gamma(\eta)\le V(\eta) \,.
\end{equation}
Therefore, the possible values of $\eta$ for which the assumption of
case \ref{peak.i} can be satisfied are bounded from above by some
$\eta^*=\eta^*(d,\gamma)$.  By the mean value theorem, for
$\omega \in (0,1)$,
\begin{equation}
  V(\eta) - V(\eta/\omega)
  \leq \max_{\xi \in [\eta,\eta/\omega]} \lvert V'(\xi) \rvert \,
       \Bigl( \frac\eta{\omega} - \eta \Bigr)
  \leq \eta^* \, \max_{\xi \in [0, \eta^*/\omega]} \lvert V'(\xi) \rvert \,
       \frac{1-\omega}\omega
  \leq \frac{d}2
  \label{e.Vdiff}
\end{equation}
where, in the last inequality, $\omega$ has been fixed sufficiently
close to $1$.  This choice is independent of $\eta$.  Now recall that
$t=s^2$ and $x=\eta s$.  Choose any $s'\in[\omega s,s]$, set
$t'={s'}{\vphantom s}^2$ and $\eta'=x/s'$ so that
$\eta' \le \eta/\omega$.  Then
\begin{align}
  d - (u(x,t') - \phi_\gamma(x,t'))
  & \leq (u(x,t) - \phi_\gamma(x,t)) - (u(x,t') - \phi_\gamma(x,t'))
    \notag \\
  & = (u(x,t) - \psi(x,t)) - (u(x,t') - \psi(x,t'))
      + V(\eta) - V(\eta')
    \notag \\
  & \leq V(\eta) - V(\eta/\omega) \leq d/2 \,,
\end{align}
where the first inequality is due to the assumption of case
\ref{peak.i}, the second inequality is due to
Lemma~\ref{lemma.p.non-decreasing} which states that $u-\psi$ is
non-increasing in $t$ for $x$ fixed.  We further used monotonicity of
$V$ in the second inequality.  The last inequality is due to
\eqref{e.Vdiff}.  Altogether, we see that
\begin{equation}
  v(\eta', s') - \Phi_\gamma(\eta') 
  = u(x,t') - \phi_\gamma(x,t')
  \geq d/2 \,.
\end{equation}
This proves \eqref{e.peak.i}.  The proof in case \ref{peak.ii} is
similar.  Notice that
\begin{equation}
  v(\eta, s) \le \Phi_\gamma(\eta) - d < \Phi_\gamma(\eta).
\end{equation}
Therefore, the possible values of $\eta$ for which the assumption of
case \ref{peak.ii} can be satisfied are bounded from below by some
$\eta^*=\eta^*(d,\gamma)>0$. The rest of the proof is obvious.
\end{proof}

\begin{figure}
\centering
\includegraphics{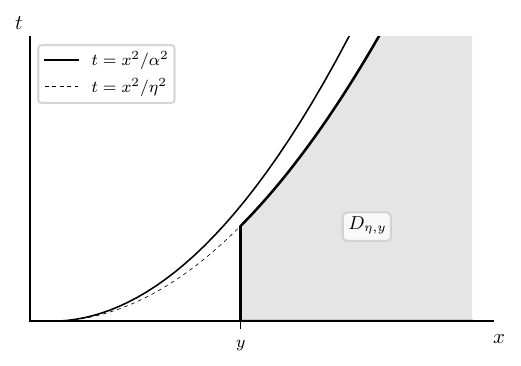}
\caption{Sketch of the region $D_{\eta,y}$ when $\eta>\alpha$.}
\label{f.4}
\end{figure}

In the following, for positive real numbers $\eta$, $y$, and $T$, we
define
\begin{gather}
  D_{\eta,y}
  = \{(x,t) \colon x\ge y, 0\le t\le \eta^{-2}x^2\} \,.
  \label{D.eta.y.infty}
\end{gather}
See Figure~\ref{f.4} for an illustration.

\begin{theorem}
\label{t.linear-asymptotics}
Let $p \in \mathcal A$ be non-zero, non-negative, non-decreasing in
time, and satisfy condition \textup{(P)}.  Assume further for each
$\eta>\alpha$ there exists $y=y(\eta)$ such that $p\equiv0$ on
$D_{\eta,y}$, and that there exists $\gamma\ge 0$ such that
\begin{equation}
  \lim_{x \to \infty} x \int^{\infty}_x p^*(\xi) \, \d \xi = \gamma \,,
  \label{e.p-assumption}
\end{equation}
where $p^*$ denotes the values of $p$ along the line $\eta=\alpha$ as
defined in condition \textup{(P)}.

Let $u$ be a weak solution of class \eqref{e.weak-linear-class} to the
linear non-autonomous equation \eqref{e.auxiliary} with $p$ fixed as
stated.  If $p$ is unbounded, assume further that $u$ is of class
\eqref{e.weak-linear-class-unbounded} and satisfies the bounds
\eqref{u.bounded.infty} or \eqref{u.bounded}.  Then $u$ converges
uniformly to $\Phi_\gamma$.
\end{theorem}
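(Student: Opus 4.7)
The plan is to derive a Duhamel identity for $u-\phi_\gamma$, where $\phi_\gamma(x,t)=\Phi_\gamma(x/\sqrt t)$, and to show that this identity forces the difference to vanish uniformly in $\eta=x/\sqrt t$ as $t\to\infty$.  Extending $u$ and $p$ evenly in $x$, the Duhamel formula gives
\begin{equation*}
  u(x,t)-\psi(x,t)
  = -\int_0^t\!\!\int_\R K(x-y,t-\tau)\,p(y,\tau)\,u(y,\tau)\,\d y\,\d\tau.
\end{equation*}
The profile $\phi_\gamma$ satisfies the same linear PDE but with precipitation $p_\gamma(y,\tau):=(\gamma/y^2)\,H(\alpha\sqrt\tau-\abs{y})$, since the jump condition \eqref{e.jump-condition} matches the same delta source at $x=\alpha\sqrt t$ and $\phi_\gamma(x,0)=0$.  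Subtracting and rearranging,
\begin{equation*}
  [u-\phi_\gamma](x,t)
  = -\int_0^t\!\!\int_\R K\,p_\gamma\,[u-\phi_\gamma]\,\d y\,\d\tau
    -\int_0^t\!\!\int_\R K\,(p-p_\gamma)\,u\,\d y\,\d\tau,
\end{equation*}
a linear Volterra-type identity for $[u-\phi_\gamma]$ driven by the source $\mathcal S(x,t):=-\int\!\!\int K\,(p-p_\gamma)\,u\,\d y\,\d\tau$.

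The first task is to prove $\sup_\eta\abs{\mathcal S(\eta\sqrt t,t)}\to 0$ as $t\to\infty$.  Under condition (P) combined with the support hypothesis $p\equiv 0$ on $D_{\eta,y}$ for every $\eta>\alpha$, the function $p$ agrees with $p^*(y)$ on the bulk strip $\{\tau>y^2/\alpha^2\}$ and is supported there once $y$ is large; meanwhile $p_\gamma=\gamma/y^2$ on the same strip.  Setting $Q(y):=\int_y^\infty p^*(\xi)\,\d\xi$, the hypothesis \eqref{e.p-assumption} reads $yQ(y)\to\gamma$, so integration by parts identifies $\int_a^b p^*(y)\,g(y)\,\d y$ with $\int_a^b(\gamma/y^2)\,g(y)\,\d y$ up to boundary errors bounded by $\abs{Q(a)-\gamma/a}+\abs{Q(b)-\gamma/b}$ for any smooth bounded $g$.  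Applied with $g(y)=K(x-y,t-\tau)u(y,\tau)$, using the a priori bounds $0\le u\le\psi$ of Lemma~\ref{u.psi} (respectively $0\le u\le\phi_0$ of \eqref{u.bounded.infty}), this delivers the desired uniform smallness of $\mathcal S$.

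To close the loop, I would extract a limit by compactness: Lemma~\ref{l.Lipschitz} and the uniform $L^\infty$ bound on $v$ give equicontinuity and boundedness of $\{v(\cdot,s)\}_{s\ge 1}$ on compact subsets of $\R_+$.  Any sequence $s_n\to\infty$ admits a subsequential locally uniform limit $V$.  Rewriting the Duhamel identity in similarity coordinates $(\eta,s)$ and passing to the limit, the Volterra term involving $p_\gamma$ converges by dominated convergence, the source $\mathcal S$ vanishes by the previous step, and the resulting stationary integral equation for $V-\Phi_\gamma$ has only the zero solution by the uniqueness argument already used in the proof of Theorem~\ref{t.selfsimilar}.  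The Peak Lemma~\ref{peak.lemma} is invoked here to propagate any putative pointwise gap $V(\eta_0)\ne\Phi_\gamma(\eta_0)$ across a full window $[\omega s_n,s_n]$, converting it to an obstruction visible in the limiting integral identity and thereby yielding a contradiction.  Thus $V=\Phi_\gamma$ for every subsequential limit, whence $v(\cdot,s)\to\Phi_\gamma$ locally uniformly.  Uniform convergence on all of $\R_+$ then follows from $v\le\psi$ together with the common $\erfc(\eta/2)$ tail decay of $\Psi$ and $\Phi_\gamma$.

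The main obstacle will be the passage to the limit inside the Duhamel integral in similarity variables.  After the change of variables $\tau=s^2\sigma$, the $\sigma$-integration reaches all the way to $0$, so one must split the integration and handle a ``near-initial'' contribution (small $\sigma$, controlled by the integral tail assumption on $p^*$ and the decay of $u$ near $t=0$) separately from a ``near-current'' contribution (large $\sigma$, controlled by condition (P), monotonicity, and Lemma~\ref{lemma.p.non-decreasing}).  A secondary delicacy is that the Volterra kernel $p_\gamma=\gamma/y^2$ is singular at $y=0$: the regularity \eqref{u.bounded.infty} available when $p^*\notin L^1$ is exactly what is needed to absorb this singularity, whereas in the integrable case the bound $u\le\psi$ from \eqref{u.bounded} suffices.
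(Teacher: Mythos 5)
Your decomposition is sound and in fact mirrors the structure the paper exploits: $\phi_\gamma$ solves the linear equation with the self-similar precipitation $p_\gamma=\gamma/y^2$ inside the parabola, and the difference equation for $u-\phi_\gamma$ is driven by a Volterra term with kernel $p_\gamma$ plus a forcing built from $p-p_\gamma$ (these are exactly the last three terms on the right of \eqref{e.w.a}). But the paper then proceeds by an energy argument in similarity variables --- a differential inequality for the cut-off functional $\int_0^\infty J_\eps(v-\Phi_\gamma)\,\d\eta$, giving convergence in measure, then in $L^r$, then uniform convergence via Lemma~\ref{peak.lemma} --- precisely because the Duhamel/compactness route you propose runs into obstructions that your sketch does not resolve.

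Concretely, three steps fail as written. (1) The compactness step: Lemma~\ref{l.Lipschitz} is a Lipschitz bound in $x$, which in the similarity variable reads $\abs{v(\eta_1,s)-v(\eta_2,s)}\le L\,s\,\abs{\eta_1-\eta_2}$; the modulus of continuity degenerates as $s\to\infty$ (the paper warns of exactly this after the lemma), so $\{v(\cdot,s)\}$ is not equicontinuous in $\eta$ and Arzel\`a--Ascoli gives nothing; moreover that lemma is proved from boundedness of $pu$ and does not cover the unbounded-$p$ case of the theorem. (2) Even granting a subsequential limit $V$ along $s_n$, the Volterra term is nonlocal in time: after $\tau=t_n\sigma$ it involves $v(\cdot,\sqrt{t_n\sigma})$ for all $\sigma\in(0,1)$, about which convergence along $s_n$ says nothing, so you do not obtain a closed stationary integral equation for $V-\Phi_\gamma$; Lemma~\ref{peak.lemma} only transports information over windows $[\omega s,s]$ of \emph{bounded ratio}, not over the full history $[0,s]$ entering the kernel. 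This is the very obstruction the authors name in the introduction as the reason a standard compactness argument does not close. (3) In the estimate of $\mathcal S$, integrating by parts leaves not only the boundary terms $\abs{Q(a)-\gamma/a}+\abs{Q(b)-\gamma/b}$ but the bulk term $\int_a^b\bigl(Q(y)-\gamma/y\bigr)g'(y)\,\d y$ with $g=K(x-\cdot,t-\tau)\,u$, which needs quantitative control of $\partial_y(Ku)$ integrated in $\tau$; and at $y=0$ both $Q$ and $\gamma/y$ blow up, so the endpoint term is harmless only if $u(0,t)\to0$ --- which is the content of Lemma~\ref{l.u0} and the reason the paper splits into the integrable and non-integrable cases. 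Without these ingredients the argument does not close.
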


\begin{proof}
Set $w=v-\Phi_\gamma$.  Subtracting \eqref{e.ode2} from \eqref{e.v-a}
and noting that, by assumption, $q(\eta,s)=p^*(s \eta)$ for $\eta <
\alpha$, we obtain
\begin{subequations}
\label{w.equat}
\begin{multline}
  \tfrac12 \, s \, w_s - \tfrac12 \, \eta \, w_{\eta}
  = w_{\eta\eta} - s^2 \, q(\eta,s) \, w \\
    + \Bigl( \frac\gamma{\eta^2} - s^2 \, p^*(s\eta) \Bigr) \,
      \Phi_\gamma \, H(\alpha-\eta)
    - s^2 \, q(\eta,s) \, \Phi_\gamma \, H(\eta-\alpha)
  \label{e.w.a}
\end{multline}
with assumed bounds on $w$, namely
\begin{gather}
  -\Phi_\gamma\le w\le \Phi_0-\Phi_\gamma
  \quad \text{provided} \quad
  \int_0^{\infty} p^*(x) \, \d x= \infty
  \label{e.bound-case1}
\intertext{or}
  -\Phi_\gamma\le w\le \Psi-\Phi_\gamma
  \quad \text{provided} \quad
  \int_0^{\infty} p^*(x) \, \d x < \infty \,.
  \label{e.bound-case2}
\end{gather}
\end{subequations}

To avoid boundary terms
when integrating by parts, we introduce a fourth-power function with
cut-off near zero which is defined, for every $\eps>0$, by 
\begin{equation}
  J_\eps(z) = \begin{cases}
             0 & \text{if } |z|<\eps \,, \\
             (|z|-\eps)^4 & \text{if } |z|\ge \eps \,.
           \end{cases}
\end{equation}
$J_\eps$ is at least twice continuously differentiable, even, positive,
and strictly convex on $(\eps,\infty)$.  We now consider the cases when
$p^*$ is integrable and $p^*$ is not integrable separately.

\begin{case}[$p^*$ is not integrable on $\R_+$]

In this case, we have the bound \eqref{e.bound-case1}, so that
$|w| \le \Phi_0+\Phi_\gamma$. Hence, for $\eps>0$, arbitrary but fixed in
the following, there are $\eta_0=\eta_0(\eps)$ and $\eta_1=\eta_1(\eps)$
with $0<\eta_0<\eta_1<\infty$ such that $|w|\le \eps$, hence
$J_\eps(w) = 0$, for $\eta \notin (\eta_0,\eta_1)$ and all $s>0$.

We multiply \eqref{e.w.a} by $J_\eps^\prime(w)$, integrate on $\R_+$, and
examine the resulting expression term-by-term.  The contribution from
the first term reads
\begin{equation}
  \label{eq.r.1}
  \frac12 \int_0^\infty s \, w_s \, J^\prime_\eps(w) \, \d\eta
  = \frac{s}2 \, \frac{\d}{\d s}
      \int_{\eta_0}^\infty J_\eps(w) \, \d\eta 
\end{equation}
and the second term contributes
\begin{align}
  \frac12 \int_0^\infty \eta \, w_{\eta} \, J^\prime_\eps(w) \, \d\eta
  & = \frac12 \int_{\eta_0}^{\eta_1} \eta \, w_{\eta} \,
        J^\prime_\eps(w) \, \d\eta
     \notag \\
  & = -\frac12 \int_{\eta_0}^{\eta_1} J_\eps(w) \, \d\eta
    = -\frac12 \int_{\eta_0}^\infty J_\eps(w) \, \d\eta \,.
  \label{eq.l.2}
\end{align}
Combining both expressions, we obtain
\begin{equation}
  \label{comb.2}
  \frac{s}2 \, \frac{\d}{\d s} \int_{\eta_0}^\infty J_\eps(w) \, \d\eta
  + \frac12 \int_{\eta_0}^\infty J_\eps(w) \, \d\eta
  = \frac{\d}{\d s}
    \biggl(
      \frac{s}2 \int_{\eta_0}^\infty J_\eps(w) \, \d\eta
    \biggr) \,.
\end{equation}
The contribution from the first term on the right of \eqref{e.w.a}
reads
\begin{align}
  \int_0^\infty w_{\eta\eta} \, J^\prime_\eps(w) \, \d\eta
  & = \int_{\eta_0}^{\eta_1}w_{\eta\eta} \, J^\prime_\eps(w) \, \d\eta
      \notag \\
  & = - \int_{\eta_0}^{\eta_1}w_\eta^2 \, J^{\prime\prime}_\eps(w) \, \d\eta
    = - \int_{\eta_0}^{\infty}w_\eta^2 \, J^{\prime\prime}_\eps(w) \, \d\eta\,.
  \label{eq.l.1}
\end{align}
The contribution from the second term on the right of \eqref{e.w.a}
satisfies
\begin{equation}
  \label{eq.l.3}
  -\int_0^\infty s^2 \, q(\eta,s) \, w \, J^\prime_\eps(w) \, \d\eta
  \leq 0
\end{equation}
because the product $w \, J^\prime_\eps(w)$ is clearly non-negative.  To
investigate the contribution coming from the third term on the right
of \eqref{e.w.a}, we integrate by parts, so that
\begin{align}
  & \int_0^\infty 
        \Bigl( \frac{\gamma}{\eta^2} - s^2 \, p^*(s\eta) \Bigr) \,
        \Phi_\gamma \, H(\alpha-\eta) \, J^\prime_\eps(w) \, \d\eta
    \notag \\
  & = \int_{\eta_0}^\alpha 
        \Bigl( \frac{\gamma}{\eta^2} - s^2 \, p^*(s\eta) \Bigr) \,
        \Phi_\gamma \, J^\prime_\eps(w) \, \d\eta
    \notag \\
  & = g(\alpha,s) \, \Phi_\gamma(\alpha) \, J^\prime_\eps(w(\alpha,s))
      - \int_{\eta_0}^\alpha g \, \Phi^\prime_\gamma \, 
        J^\prime_\eps(w) \, \d\eta
      - \int_{\eta_0}^\alpha g \, \Phi_\gamma \, 
        w_\eta \, J^{\prime\prime}_\eps(w) \, \d\eta \,,
  \label{eq.l.4}
\end{align}
where $g$ is an anti-derivative of the term in parentheses, namely
\begin{equation}
  g(\eta,s)
  = s^2\int_{\eta}^\infty p^*(s\kappa)\, \d\kappa
    - \frac\gamma\eta
  = s\int_{s\eta}^\infty p^*(\zeta) \, \d\zeta
    - \frac\gamma\eta \,.
\end{equation}
We note that
for fixed $\eta>0$, due to \eqref{e.p-assumption}, $g(\eta, s)\to0$ as
$s\to\infty$.

When $\eps<u^*$, the equation $\Phi_0(\eta)=\eps$ has one root
$\eta \leq \alpha$.  Since $u \leq \Phi_0$, we can set $\eta_0=\eta$
so that $\eta_0 \leq \alpha$, which we assume henceforth.  Combining
\eqref{eq.l.1} with the last term in \eqref{eq.l.4}, we obtain
\begin{align}
  - \int_{\eta_0}^\infty & w_{\eta}^2 \, J^{\prime\prime}_\eps(w) \, \d\eta
      - \int_{\eta_0}^\alpha g \, \Phi_\gamma \, w_\eta \, 
          J^{\prime\prime}_\eps(w) \, \d\eta
    \notag \\
  & = - \int_{\alpha}^{\infty} w_\eta^2 \, J^{\prime\prime}_\eps(w) \, \d\eta
      - \int_{\eta_0}^{\alpha} \bigl( w_\eta + \tfrac12 \, g \, 
           \Phi_\gamma \bigr)^2 \, J^{\prime\prime}_\eps(w) \, \d\eta
    \notag \\
  & \quad + \frac14 \int_{\eta_0}^\alpha g^2 \,
           \Phi_\gamma^2 \, J^{\prime\prime}_\eps(w) \, \d\eta
    \notag \\
  & = \frac12 \int_{\eta_0}^\alpha g^2 \,
           \Phi_\gamma^2 \, J^{\prime\prime}_\eps(w) \, \d\eta - G^*(s)
  \label{comb.1}
\end{align}
where
\begin{align}
  G^*(s)
  & = \int_{\alpha}^{\infty} w_\eta^2 \, J^{\prime\prime}_\eps(w) \, \d\eta
      + \int_{\eta_0}^{\alpha}
          \bigl( w_\eta+ \tfrac12 \, g \, \Phi_\gamma \bigr)^2 \,
          J^{\prime\prime}_\eps(w) \, \d\eta
      + \frac14 \int_{\eta_0}^{\alpha} 
          g^2 \, \Phi_\gamma^2 \, J^{\prime\prime}_\eps(w) \, \d\eta
      \notag\\
  & \ge \int_{\alpha}^{\infty} w_\eta^2 \, J^{\prime\prime}_\eps(w) \, \d\eta
      + \frac12 \int_{\eta_0}^{\alpha} w_\eta^2 \,
          J^{\prime\prime}_\eps(w) \, \d\eta
      \notag\\
  & \ge \frac12 \int_0^{\infty} w_\eta^2 \,
      J^{\prime\prime}_\eps(w) \, \d\eta \,.
  \label{e.gstar}
\end{align}
We note that we have used the Jensen inequality in the first
inequality of this lower bound estimate.

Finally, the last term on the right of \eqref{e.w.a} is treated as
follows.  We define
\begin{equation}
  F(s)
  = s^2 \int_0^\infty q(\eta,s) \, \Phi_\gamma \, J_\eps'(w) \,
    H(\eta-\alpha) \, \d\eta
\end{equation}
and
\begin{equation}
  \Gamma(x)=x\int_x^\infty p^*(\xi)\,\d \xi \,.
\label{Gamma.def}
\end{equation}
For fixed $\eta^*>\alpha$ we can find $y=y(\eta^*)$ such that
$p\equiv0$ on $D_{\eta^*,y}$, i.e., $q(\eta, s)=0$ for all
$\eta>\eta^*$ and $s\ge y/\eta^*$. 

Then, for
$s\ge s_0 \equiv y/\eta^*$,
\begin{align}
   F(s) \le \Phi_\gamma(\alpha) \, J_\eps'(\Psi(\alpha))
     \int_\alpha^{\eta^*} s^2 \, q(\eta,s) \, \d\eta \,.
   \label{e.F-bound1}
\end{align}
Since $p$ is non-decreasing in time $t$, we estimate
\begin{equation}
  \int_\alpha^{\eta^*} s^2 \, q(\eta, s) \, \d\eta
  \leq \int_\alpha^{\eta^*} s^2 \, p^*(\eta s) \, \d\eta
  = s \int_{s\alpha}^{s\eta^*} p^*(\kappa) \, \d\kappa
  = \frac{\Gamma(s\alpha)}{\alpha}
    - \frac{\Gamma(s\eta^*)}{\eta^*} \,.
\end{equation}
Inserting this bound into \eqref{e.F-bound1} and noting that, due to
\eqref{e.p-assumption}, $\lim_{x\to\infty}\Gamma(x)=\gamma$, we find
that
\begin{equation}
  \limsup_{s \to \infty} F(s)
  \le \Phi_\gamma(\alpha) \, J_\eps'(\Psi(\alpha)) \,
      \Bigl( \frac\gamma\alpha - \frac\gamma{\eta^*} \Bigr) \,.
  \label{e.F-bound2}
\end{equation}

Adding up the individual contributions and neglecting the clearly
non-positive terms on the right hand side as indicated, we obtain
altogether
\begin{equation}
  \frac{\d}{\d s}
    \biggl( \frac{s}2 \int_0^\infty J_\eps(w) \, \d\eta \biggr)
  \leq G(s) - G^*(s) + F(s)
  \label{e.diff-ineq}
\end{equation}
with
\begin{multline}
  G(s)
  = g(\alpha,s) \, \Phi_\gamma(\alpha) \, J^\prime_\eps(w(\alpha,s)) 
    - \int_{\eta_0}^\alpha g \, \Phi^\prime_\gamma
        J^\prime_\eps(w) \, \d\eta 
    + \frac12 \int_{\eta_0}^\alpha
        g^2 \, \Phi_\gamma^2 \, J^{\prime\prime}_\eps(w) \, \d\eta \,.
\label{G.exp}
\end{multline}
We note that $G(s) \to 0$ as $s \to 0$.  Indeed, the first term
converges to zero because $g$ converges to zero.  The two integrals
converge to zero because, in addition, on $[\eta_0, \alpha]$ the
function $g$ satisfies the uniform bound
\begin{equation}
  g(\eta,s)
  = \frac{\Gamma(\eta s) - \gamma}\eta
  \leq \frac1\eta \, \sup_{x \geq \eta_0 s_0} \Gamma(x)
\end{equation}
which, together with the known bounds on $\Phi$, $\Phi'$, and $w$,
implies that the dominated convergence theorem is applicable.  Hence,
each of the integrals converges to zero.

Integrating \eqref{e.diff-ineq} from $s_0$ to $s$, we obtain
\begin{equation}
  \int_0^\infty J_\eps(w(\eta,s)) \, \d\eta
  - \frac{s_0}s \int_0^\infty J_\eps(w(\eta,s_0) \, \d\eta
  \leq \frac{2}s \int_{s_0}^s
       \bigl( G(\sigma) - G^*(\sigma) + F(\sigma) \bigr) \,
       \d \sigma \,.
  \label{e.integrated}
\end{equation}
We now take $\limsup_{s\to\infty}$.  The second term on the left
vanishes trivially.  Since $G$ converges to zero, so does its time
average, so its contribution is negligible in the limit.  $G^*$ is
non-negative, hence can be neglected.  For the contribution from $F$,
we refer to \eqref{e.F-bound2}.  Hence,
\begin{equation}
  \limsup_{s\to+\infty}\int_0^\infty J_\eps(w) \, \d\eta
  \le 2 \Phi_\gamma(\alpha) \, J_\eps'(\Psi(\alpha)) \,
      \Bigl( \frac\gamma\alpha - \frac\gamma{\eta^*} \Bigr) \,.
\label{limsup.J_l.F}
\end{equation}
Since $\eta^*>\alpha$ is arbitrary, we conclude that
\begin{equation}
  \lim_{s\to\infty} \int_0^\infty J_\eps(w(\eta,s)) \, \d\eta = 0 \,.
  \label{e.Jl-convergence}
\end{equation}
This implies that for every fixed $\eps>0$,
\begin{equation}
  m(\{ \eta \colon \lvert w \rvert>2\eps\} ) \, J_\eps(2\eps)
  \leq \int_{\{ \eta \colon \lvert w \rvert>2\eps\}} J_\eps(w)\,\d \eta
  \leq \int_0^\infty J_\eps(w) \, \d \eta \to 0
\label{J_l.in.measure}
\end{equation}
as $s\to\infty$, where $m$ is the Lebesgue measure on the real line,
i.e., $\lvert w \rvert$ converges to zero in measure.  Due to the
bound on $w$, the dominated convergence theorem with convergence in
measure, e.g.\ \cite[Corollary~2.8.6]{Bogachev:2007:MeasureT}, implies
that $v\to\Phi_\gamma$ in $L^r(\R_+)$ for every $r \in [1,\infty)$.
\end{case}

\begin{case}[$p^*$ is integrable on $\R_+$]
\label{c.2}
When $p^*$ is integrable, we only have the weaker bound on $w$ given
by \eqref{e.bound-case2}.  Thus, we must take $\eta_0=0$.  On the
other hand, due to Lemma~\ref{l.u0}, $u(0,s)$ is converging to $0$ as
$s\to\infty$.  Thus, we fix $\eps>0$ and choose $s_0=s_0(\eps)$ satisfying
$u(0,s_0)<\eps$.  Then $J_\eps(w(0,s))=J_\eps^\prime(w(0,s))=0$ for all $s>s_0$
so that the boundary terms when iterating by parts in \eqref{eq.l.1},
\eqref{eq.l.2}, and \eqref{eq.l.4} vanish as before, so that all
computations from Case~1 up to equation \eqref{G.exp} remain valid as
before.

The bound on $g$ now takes the form 
\begin{equation}
  g(\eta,s)
  = \frac{\Gamma(\eta s) - \gamma}\eta
  \leq \frac1\eta \, \sup_{x \geq 0} \Gamma(x)
\end{equation}
where, as before, $\Gamma$ is given by \eqref{Gamma.def}.  This
implies that the integrands in the second and third term in
\eqref{G.exp} satisfy bounds on the interval $[0,\alpha]$ which take
the form
\begin{subequations}
\begin{gather}
  \lvert g \, \Phi^\prime_\gamma \, J^\prime_\eps(w)\rvert
  \le C_1 \, \eta^{\kappa-2} \,,
  \label{G.2nd.member.bound} \\
  \lvert g^2 \, \Phi_\gamma^2 \, J^{\prime\prime}_\eps(w)\rvert
  \le C_2 \, \eta^{2(\kappa-1)} \,,
  \label{G.3rd.member.bound}
\end{gather}
\end{subequations}
where $C_1$ and $C_2$ are positive constants.  When $\gamma>0$ so that
$\kappa>1$, both bounds are integrable on $[0,\alpha]$ and the
dominated convergence theorem applies as before, proving that
$G(s)\to0$ as $s\to\infty$.  When $\gamma=0$ so that $\kappa=1$, the
second bound \eqref{G.3rd.member.bound} is still integrable, but the
first is not.  Thus, for the second term on the right of
\eqref{G.exp}, we change the strategy as follows.

Observe that when $\gamma=0$, then 
\begin{equation}
  g(\eta,s) = s \int_{s\eta}^\infty p^*(\zeta) \, \d\zeta \ge 0 \,.
\end{equation}
Thus, the second term in \eqref{G.exp} is bounded above by
\begin{equation}
  - \int_0^{\alpha} g(\eta,s) \, \Phi_0'(\eta) \,
      J_\eps'(w(\eta,s)) \, \d \eta
  \le \int_0^{\alpha} \I_{\{w(\eta,s)<0\}}(\eta) \, g(\eta,s) \,
        \Phi_0'(\eta) \, \lvert J_\eps'(w(\eta,s)) \rvert \, \d \eta \,.
  \label{e.g-second-term}
\end{equation}
Note that $w(\eta,s)<0$ if and only if $u(\eta,s)<\Phi_0(\eta)$.
Moreover, $\Phi_0(\eta) = O(\eta)$ as $\eta \to 0$ so that
$\I_{\{w<0\}} \, J_\eps'(w) = O(\eta^3)$.  Altogether, there exists $C_3>0$
such that
\begin{equation}
  \I_{\{w(\eta,s)<0\}}(\eta) \, g(\eta,s) \, \Phi_0'(\eta) \,
  \lvert J_\eps'(w(\eta,s)) \rvert
  \leq C_3 \, \eta^2 
\end{equation}
which provides an integrable upper bound for the integrand on the
right of \eqref{e.g-second-term}.  The dominated convergence theorem
then proves that the integral on the right of \eqref{e.g-second-term}
converges to zero as $s \to \infty$.

Thus, we find in all cases that
$\limsup_{\sigma\to\infty}G(\sigma)\le 0$, so that the argument from
\eqref{e.integrated} to \eqref{e.Jl-convergence} proceeds as before
and \eqref{J_l.in.measure} is valid for every $\eps>0$.  This shows that
$\lim_{s\to\infty}w=0$ in $L^r$.
\end{case}

In the final step, we bootstrap from $L^r$-convergence to uniform
convergence on $\R_+$.  We argue by contradiction and for both cases
at once.

Suppose convergence is not uniform.  Then there exists $d>0$ such that
\begin{equation}
  \limsup_{s\to\infty}\max_{\eta\in\R_+}w(\eta,s) \ge 2d
\end{equation}
or
\begin{equation}
  \liminf_{s\to\infty}\min_{\eta\in\R_+}w(\eta,s) \le -2d \,.
  \label{e.uniform-case2}
\end{equation}
Suppose that the first alternative holds; the argument for the second
alternative proceeds analogously and shall be omitted.  By
Lemma~\ref{peak.lemma}\ref{peak.i}, there exists $\omega \in (0,1)$, a
sequence $s_i \to \infty$, and a sequence $\eta_i$ such that for every
$i \in \N$,
\begin{equation}
  \min_{s \in [\omega s_i,s_i]} w(\eta_i,s) \geq d/2 \,.
\end{equation}
Due to the uniform bound on $w$ which decays as $\eta \to \infty$, the
sequence $\eta_i$ must be contained in a compact interval of length
$L$ (possibly dependent on $d$).  In the following, fix $\eps < d/4$.

For fixed $s \in [\omega s_i,s_i]$, let
\begin{equation}
  \eta_0 = \max \{ \eta < \eta_i \colon J_\eps(w(\eta,s)) = 0 \} \,.
\end{equation}
(By continuity, the maximum exists and is less than $\eta_i$; in
Case~2 we may need to take $i$ large enough so that $\omega s_i > s_0$.)
Due to the fundamental theorem of calculus,
\begin{equation}
  J_\eps^{1/2}(w(\eta_i,s)) - J_\eps^{1/2}(w(\eta_0,s))
  = \int_{\eta_0}^{\eta_i} \partial_\eta
      J_\eps^{1/2} (w(\eta,s)) \, \d \eta 
\end{equation}
so that, noting that $J_\eps^{1/2}(w(\eta_i,s))=0$,
$J_\eps^{1/2}(w(\eta_0,s))\ge (d/2-\eps)^2$, and $d/2-\eps\ge d/4$ on the left
and using the Cauchy--Schwarz inequality on the right, we obtain
\begin{equation}
  \Bigl( \frac{d}4 \Bigr)^2
  \leq (\eta_i - \eta_0)^{\tfrac12} \,
       \biggl(
         \int_{\eta_0}^{\eta_i} 4 \, (\lvert w \rvert - \eps)^2 \,
           w_\eta^2 \, \d \eta
       \biggr)^{\tfrac12}
  \leq \sqrt L \,
        \biggl(
         \frac13 \int_{\R_+} J_\eps^{\prime\prime} (w) \,
           w_\eta^2 \, \d \eta
       \biggr)^{\tfrac12} \,.
\end{equation}
We conclude that the integral on the right is bounded below by some
strictly positive constant, say $b$, which only depends on $d$.  Due
to \eqref{e.gstar}, $b/2$ is also a lower bound on $G^*$.  Thus,
returning to \eqref{e.integrated} with $s=s_i$ and $s_0=\omega s_i$, we
obtain
\begin{align}
  \int_0^\infty J_\eps(w(\eta,s_i)) \, \d\eta
  & - \omega \int_0^\infty J_\eps(w(\eta,\omega s_i) \, \d\eta
     \leq \frac{2}{s_i} \int_{\omega s_i}^{s_i}
       \bigl( G(\sigma) - G^*(\sigma) + F(\sigma) \bigr) \, \d \sigma
    \notag \\
  & \leq
    - (1-\omega) \, b + \frac{2}{s_i} \int_{\omega s_i}^{s_i}
       \bigl( G(\sigma) + F(\sigma) \bigr) \, \d \sigma \,.
\end{align}
We now let $i \to \infty$ and observe that, due to
\eqref{e.Jl-convergence}, the two terms on the left converge to zero.
For the integral on the right, we apply the same asymptotic bounds as
in the first part of the argument, so that 
\begin{equation}
  0
  \le - (1-\omega) \, b + \Phi_\gamma(\alpha) \, J_\eps'(\Psi(\alpha)) \,
      \Bigl( \frac\gamma\alpha - \frac\gamma{\eta^*} \Bigr) \,.
\end{equation}
Since $\eta^*>\alpha$ is arbitrary, we reach a contradiction.
Alternative \eqref{e.uniform-case2} can be argued similarly, with
reference to Lemma~\ref{peak.lemma}\ref{peak.ii}.  This completes the
proof of uniform convergence.
\end{proof}

\begin{remark}
The use of the cut-off function $J_\eps$ is a technical necessity to
avoid boundary terms when integrating by parts.  Our particular choice
of $J_\eps$ amounts essentially to an $L^4$ estimate; the exponent $4$
was chosen purely for the convenience of an easy explicit cut-off
construction.  The implication of $L^r$-convergence for any
$r \in [1,\infty)$ can then be understood as a consequence of
boundedness of $w$ and $L^p$-interpolation.
\end{remark}

\section{Long-time behavior of the HHMO-model}
\label{s.hhmo}

In this section, we turn to studying the long-time behavior of
solutions to the actual HHMO-model \eqref{e.original}.  We first prove
a series of simple results,
Theorems~\ref{self.similar.does.not.exists}--\ref{no.interring.infinite},
which are all based on constructing suitable sub- and supersolutions
whose long-time behavior can be described by
Theorem~\ref{t.linear-asymptotics}.  We then turn to maximum principle
arguments which show that the onset of precipitation in the HHMO-model
is asymptotically close to the line $\eta=\alpha$, so that a statement
like Theorem~\ref{t.linear-asymptotics} also holds true for
HHMO-solutions.  Finally, we prove the main result of this section,
which can be seen as a converse statement, the identification of the
only possible limit profile for the HHMO-model.  The two main
statements are summarized as Theorem~\ref{convergence.conclusion} at
the end of the section.

\begin{theorem}[Long-time behavior in the transitional regime]
\label{self.similar.does.not.exists}
Let $(u,p)$ be a weak solution to \eqref{e.original} in the
transitional regime where $u^*_0<u^*<\Psi(\alpha)$, $u^*_0$ being
defined in \eqref{eq.kappa.2} with $\gamma = 0$.  Then $p(x,t)=0$ for
all $x$ large enough.  Moreover, $u$ converges uniformly to the
profile $\Phi_0$.
\end{theorem}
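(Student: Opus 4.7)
The plan is to establish the theorem in two stages. First, I show that $\supp p^*$ is bounded, which by the time-monotonicity of $p$ implies $p(x,t)=0$ for every sufficiently large $x$ and every $t\ge 0$. Second, Theorem~\ref{t.linear-asymptotics} applied directly to $(u,p)$ yields the uniform convergence $u\to\Phi_0$ with $\gamma=0$.

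For the first stage, I argue by contradiction and assume $\supp p^*$ is unbounded. Because $u^*<\Psi(\alpha)$, there is at least one non-degenerate precipitation region in the sense of the remark following Lemma~\ref{l.u0}, so I may fix $x_1>0$ large enough that the restriction of $p$ to $[-x_1,x_1]\times\R_+$ is non-zero. Define the truncation
\[
  \tilde p(x,t) = p(x,t)\,\I_{\{|x|\le x_1\}}(x) \,,
\]
and let $\tilde u$ be the solution of the linear auxiliary problem \eqref{e.auxiliary} with precipitation $\tilde p$. Since $\tilde p\le p$ pointwise and both $u$ and $\tilde u$ satisfy the initial condition of Lemma~\ref{u.psi}, the comparison principle for the linear equation yields $u\le\tilde u$. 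The truncated $\tilde p$ is bounded, non-negative, non-decreasing in $t$, non-zero, supported in $[-x_1,x_1]$ uniformly in time, inherits condition (P) with $\tilde p^*(x)=p^*(x)\I_{\{|x|\le x_1\}}(x)$, and for every $\eta>\alpha$ vanishes on $D_{\eta,y}$ once $y>x_1$. Since $\tilde p^*$ has compact support, $\lim_{x\to\infty} x\int_x^\infty \tilde p^*(\xi)\,\d\xi=0$, and Theorem~\ref{t.linear-asymptotics} applies with $\gamma=0$, giving $\tilde v\to\Phi_0$ uniformly in $\eta$.

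Because $\max_\eta\Phi_0(\eta)=u^*_0<u^*$ by the transitional-regime inequality, the uniform convergence above produces $T>0$ such that $v(\eta,s)\le\tilde v(\eta,s)<u^*$ for every $\eta\ge 0$ and every $s>T$. On the other hand, the assumed unbounded support of $p^*$ provides a sequence $x_n\to\infty$ for which $p(x_n,t)>0$ for all $t$ sufficiently large. The ignition condition \eqref{e.hhmo-p-weak-alternative} then yields $\tau_n>0$ with $u(x_n,\tau_n)\ge u^*$; Lemma~\ref{u.psi} combined with the fact that $\psi<u^*$ on $D^*$ forces $\tau_n\ge x_n^2/(\alpha^*)^2$, so $s_n\equiv\sqrt{\tau_n}\to\infty$. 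Writing $\eta_n=x_n/s_n\in[0,\alpha^*]$, we obtain $v(\eta_n,s_n)\ge u^*$ for every $n$, a direct contradiction to the upper bound as soon as $s_n>T$.

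With $\supp p^*\subset[-R,R]$ now established, monotonicity of $p$ in $t$ together with condition (P) gives $p(x,t)\le p^*(x)=0$ for every $|x|>R$ and every $t\ge 0$, which is the first claim of the theorem. The hypotheses of Theorem~\ref{t.linear-asymptotics} are now satisfied by $(u,p)$ itself with $\gamma=0$ (take $y(\eta)=R+1$ for any $\eta>\alpha$), and the uniform convergence $u\to\Phi_0$ follows. The key obstacle throughout is the first stage: the transitional-regime hypothesis $u^*_0<u^*$ is exactly what keeps the long-time upper envelope $\Phi_0$ below the ignition threshold and thereby prevents precipitation from propagating to arbitrarily large $x$.
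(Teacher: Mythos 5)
Your overall strategy---dominate $u$ by the solution of the linear auxiliary problem for a reduced precipitation function, invoke Theorem~\ref{t.linear-asymptotics} with $\gamma=0$ to push the concentration uniformly below $u^*$, conclude that ignition stops for large $x$, and then apply Theorem~\ref{t.linear-asymptotics} once more---is exactly the paper's. The contradiction mechanism in your first stage (ignition at large $x_n$ forces $v(\eta_n,s_n)\ge u^*$ with $s_n\to\infty$, against the uniform bound $v<u^*$ for $s>T$) is also the paper's argument in slightly different packaging. However, there is one substantive gap: every application of Theorem~\ref{t.linear-asymptotics} in your proof is made to a precipitation function for which condition (P) is not known. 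The theorem being proved does \emph{not} assume that the weak solution's $p$ satisfies \eqref{p.property}; for a general weak solution a function $p^*$ in the sense of condition (P) need not exist, since $p(x,\cdot)$ could in principle keep increasing after the source has passed $x$ (this is precisely the degeneracy that condition (P) is introduced to exclude, cf.\ Remark~\ref{r.prec-condition}). Consequently your truncation $\tilde p=p\,\I_{\{|x|\le x_1\}}$ does not ``inherit'' condition (P)---it satisfies it only if $p$ already does on $[-x_1,x_1]$---and the final direct application of Theorem~\ref{t.linear-asymptotics} to $(u,p)$ itself is likewise unjustified. As written, you have proved the theorem only under the additional hypothesis that $p$ satisfies condition (P).

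The repair is exactly what the paper does: never feed the actual $p$ into Theorem~\ref{t.linear-asymptotics}, but bracket it by explicitly constructed precipitation functions that satisfy (P) by fiat. For the supersolution, take $p_1(x,t)=H(t-x^2/\alpha^2)$ for $x\le X_1$ (the first precipitation region) and $0$ otherwise; then $p_1\le p$, condition (P) holds for $p_1$ by construction with $p_1^*=\I_{[0,X_1]}$, and the corresponding $u_1\ge u$ converges uniformly to $\Phi_0$, which yields $u<u^*$ for $x>\alpha^*\sqrt T$ and hence $p\equiv 0$ there, exactly as in your first stage. For the convergence of $u$ itself, rather than applying the linear theorem to $(u,p)$, take the majorant $p_2=\I_{[0,\alpha^*\sqrt T]}(x)\ge p$, whose solution $u_2\le u$ also converges uniformly to $\Phi_0$ by Theorem~\ref{t.linear-asymptotics}; the squeeze $u_2\le u\le u_1$ then gives the conclusion without ever invoking condition (P) for $p$. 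Everything else in your write-up (the comparison direction $\tilde p\le p\Rightarrow u\le\tilde u$, the use of $\max_\eta\Phi_0(\eta)=u_0^*<u^*$, the lower bound $\tau_n\ge x_n^2/(\alpha^*)^2$ from Lemma~\ref{u.psi}) is correct.
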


\begin{proof}
Set $Y=X_1$, the right endpoint of the first precipitation region, see
\cite[Lemma~3.5]{HilhorstHM:2009:MathematicalSO}, provided that it is
finite.  If it were infinite, we would set $Y=1$.  (The theorem shows
that this case is impossible, but at this point we do not know.)  We
then define
\begin{equation}
  p_1(x,t)
  = \begin{cases}
      H(t-x^2/\alpha^2) & \text{for } x \le Y \,,\\
      0 & \text{otherwise} \,.
    \end{cases}
\end{equation}
and note that $p_1\le p$. This function satisfies condition (P) with
$p_1^*(x)=H(Y-x)$ for $x \ge 0$ as well as the conditions of
Theorem~\ref{t.linear-asymptotics}; we note, in particular, that
\begin{equation}
  x \int_x^\infty p_1^*(\xi) \, \d\xi = 0
\end{equation}
for $x \ge Y$ so that \eqref{e.p-assumption} holds with $\gamma=0$.

Let $u_1$ denote the weak solution to the linear non-autonomous
problem \eqref{e.auxiliary} with $p=p_1$.  By construction, $u_1$ is a
supersolution to $u$ and by Theorem~\ref{t.linear-asymptotics}, $u_1$
converges uniformly to $\Phi_0$.  This implies that there exists $T>0$
such that for all $t>T$,
\begin{equation}
  u(x,t) \leq u_1(x,t) 
  \leq \tfrac12 \, (\Phi_0(\alpha)+u^*)
  = \tfrac12 \, (u_0^*+u^*) < u^* \,.
\end{equation}
Further, due to Lemma~\ref{u.psi}, $u(x,t)<u^*$ for all $(x,t)$ with
$x>\alpha^* \sqrt{T}$ and $t \leq T$.  Combining these two bounds, we
find that $u(x,t)<u^*$ for all $x>\alpha^*\sqrt{T}$ and therefore
no ignition of precipitation is possible in this region.

Let $p_2 (x,t)= \I_{[0,\alpha^*\sqrt{T}]}(x)$.  By the same argument
as before, $p_2$ satisfies the conditions of
Theorem~\ref{t.linear-asymptotics} with $\gamma=0$.  Let $u_2$ denote
the solution to \eqref{e.auxiliary} corresponding to $p=p_2$.  Since
$p_2 \geq p$, $u_2$ is a subsolution of $u$.
Theorem~\ref{t.linear-asymptotics} implies that $u_2$ converges
uniformly to $\Phi_0$.  Altogether, as $u_2\le u \le u_1$, we conclude
that $u$ converges uniformly to $\Phi_0$ as well.
\end{proof}

\begin{remark}
A similar argument can be made in case of a marginal precipitation
threshold.  In Theorem~\ref{weak.sol.ustar.threshold.critical}, we
have already seen that marginal solutions are not unique.  For the
long-time behavior, there are two possible cases: If $p$ remains zero
a.e., then $u=\psi$ everywhere, so the long-time profile in $\eta$-$s$
coordinates is $\Psi$.  As soon as spontaneous precipitation occurs on
a set of positive measure, the long-time profile is $\Phi_0$ instead.
To see this, let $c\in(0,1]$ be such value that $p\ge c$ on some
subset of $\R\times\R_+$ of positive measure.  Select $t^*$ such that
$p(\,\cdot\,, t^*)\ge c$ on some subset $A \subset \R$ of positive
measure.  Set $p_1(x,t)=c \, \I_A(x) \, H(t-t^*)$ and let $u_1$ denote
the associated bounded solution to the auxiliary problem
\eqref{e.auxiliary}; $u_1$ is a supersolution for $u$.  Even though
condition (P) does not hold literally, the argument in the proof of
Theorem~\ref{t.linear-asymptotics} still works when restricted to
$s\ge \sqrt{t^*}$.  Hence, $u^1$ converges uniformly to $\Phi_0$.  A
subsolution, also converging to $\Phi_0$, can be constructed as in the
proof of Lemma~\ref{l.u0}.
\end{remark}

The next theorem states that it is impossible to have a precipitation
ring of infinite width in the strict sense that $u$ permanently
exceeds the precipitation threshold in some neighborhood of the source
point.  A similar theorem is stated in \cite[Theorem
3.10]{HilhorstHM:2009:MathematicalSO}, albeit under a certain
technical assumption on the weak solution.  The theorem here does not
depend on this assumption.

\begin{theorem}[No ring of infinite width]
\label{no.ring.infinite}
Let $(u,p)$ be a weak solution to \eqref{e.original}.  Then
\begin{equation}
  \liminf_{x \to \infty} u(x, x^2/\alpha^2) \le u^*
  \label{e.u-claim}
\end{equation} 
and there exist precipitation gaps for arbitrarily large $x$ in the
following sense: for every $Y> 0$,
\begin{equation}
  \essinf_{\substack{x \ge Y \\ t \ge x^2/\alpha^{2}}} p(x,t) < 1 \,.
  \label{e.p-claim}
\end{equation}
\end{theorem}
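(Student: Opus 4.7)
We propose to establish \eqref{e.p-claim} directly by contradiction; then \eqref{e.u-claim} follows as an immediate corollary. Indeed, if \eqref{e.u-claim} fails there exist $\delta,X_0>0$ with $u(x,x^2/\alpha^2)>u^*+\delta$ for all $x\ge X_0$; hence \eqref{e.hhmo-p-weak-alternative} together with the monotonicity of $p$ in $t$ (Definition~\ref{weak.sol.def}\ref{weak.3.5}) forces $p\equiv 1$ on $\{x\ge X_0,\;t\ge x^2/\alpha^2\}$, which contradicts \eqref{e.p-claim} at $Y=X_0$.

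To establish \eqref{e.p-claim}, suppose for contradiction that there exists $Y>0$ with $p=1$ a.e.\ on $R_Y:=\{(x,t)\colon x\ge Y,\;t\ge x^2/\alpha^2\}$. For each $\gamma\ge\max\{Y^2,1\}$ we introduce the auxiliary precipitation
\begin{equation*}
  \tilde p_\gamma(x,t)
  = \frac{\gamma}{x^2}\,\I_{[\sqrt\gamma,\infty)}(x)\,H\bigl(t-x^2/\alpha^2\bigr),
\end{equation*}
which takes values in $[0,1]$ and satisfies $\tilde p_\gamma\le p$ a.e.: on its support, which lies inside $R_{\sqrt\gamma}\subseteq R_Y$, we have $\tilde p_\gamma\le 1=p$, and elsewhere $\tilde p_\gamma=0\le p$. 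Moreover, $\tilde p_\gamma\in\mathcal A$, it is non-decreasing in $t$, it is $t$-independent for $t>x^2/\alpha^2$ and hence satisfies condition (P) with $\tilde p_\gamma^*(x)=(\gamma/x^2)\,\I_{[\sqrt\gamma,\infty)}(x)$, and it vanishes wherever $\eta=x/\sqrt t>\alpha$. A direct computation yields $x\int_x^\infty\tilde p_\gamma^*(\xi)\,\d\xi=\gamma$ for every $x\ge\sqrt\gamma$, so all hypotheses of Theorem~\ref{t.linear-asymptotics} are satisfied. Let $u_\gamma$ denote the weak solution of the linear auxiliary problem \eqref{e.auxiliary} with $p$ replaced by $\tilde p_\gamma$; since $\tilde p_\gamma\le p$, the comparison principle gives $u\le u_\gamma$ on $\R_+\times\R_+$.

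By Theorem~\ref{t.linear-asymptotics}, $u_\gamma$ converges uniformly in similarity coordinates to $\Phi_\gamma$. Since $\Phi_\gamma\le\Phi_\gamma(\alpha)=u^*_\gamma$ pointwise, for every $\eps>0$ there exists $t_0=t_0(\gamma,\eps)$ such that $u(x,t)\le u_\gamma(x,t)\le u^*_\gamma+\eps$ whenever $t\ge t_0$. Using the power series representation of Kummer's function together with dominated convergence, both $M(\kappa/2,\kappa+1/2,-\alpha^2/4)$ and $M(\kappa/2+1,\kappa+1/2,-\alpha^2/4)$ tend to $\e^{-\alpha^2/8}$ as $\kappa\to\infty$; substituting into \eqref{eq.kappa.2} shows that $u^*_\gamma\to 0$ as $\gamma\to\infty$. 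Fix $\gamma$ large and $\eps$ small so that $u^*_\gamma+\eps<u^*/2$. For the complementary short-time region $t\le t_0$, choose $\eta^*>\alpha$ with $\Psi(\eta^*)<u^*/2$ (possible by the decay of $\Psi$ at infinity); then Lemma~\ref{u.psi} gives $u\le\psi=\Psi(x/\sqrt t)\le\Psi(\eta^*)<u^*/2$ whenever $x\ge\eta^*\sqrt{t_0}$. Combining the two ranges, $\sup_{t\ge 0}u(x,t)<u^*$ for all sufficiently large $x$, so \eqref{e.hhmo-p-weak-alternative} forces $p(x,t)=0$ a.e.\ on this tail. This contradicts $p=1$ a.e.\ on $R_Y$ and completes the argument.

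The main obstacle is locating the self-similar ansatz $\tilde p_\gamma$ that simultaneously respects the pointwise cap $\tilde p_\gamma\le 1$, satisfies condition (P) so that Theorem~\ref{t.linear-asymptotics} is applicable, and carries a tunable tail $x\int_x^\infty\tilde p_\gamma^*=\gamma$ that can be sent to infinity. The scale-invariance requirement dictates the rate $\gamma/x^2$, and the pointwise cap then forces the truncation at $x=\sqrt\gamma$. Once this ansatz is in hand, the remaining ingredients---the comparison principle, uniform convergence from Theorem~\ref{t.linear-asymptotics}, and the Kummer asymptotics showing $u^*_\gamma\to 0$---are routine.
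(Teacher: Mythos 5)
Your proposal is correct and follows essentially the same route as the paper: assume $p\equiv1$ a.e.\ on the tail, compare against the linear auxiliary problem with the truncated self-similar precipitation $\gamma/x^2$ for $x\ge\sqrt\gamma$ (the paper's $p_3$), invoke Theorem~\ref{t.linear-asymptotics} and the fact that $u^*_\gamma\to0$ as $\gamma\to\infty$ to drive the concentration below $u^*$ for large times, handle the early-time region via the supersolution $\psi$, and conclude $p\equiv0$ on a tail, a contradiction; \eqref{e.u-claim} then follows from \eqref{e.p-claim} exactly as in the paper. The only differences (working with the threshold $u^*/2$ and an explicit $\eta^*$ in place of $\alpha^*$, and parameterizing the comparison family before fixing $\gamma$) are cosmetic.
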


\begin{proof}
Suppose the converse, i.e., that there exists $Y>0$ such that
$p(x,t)=1$ for almost all pairs $(x,t)$ with $x \geq Y$ and
$t \geq x^2/\alpha^2$.  Choose $\gamma>0$ such that
$\Phi_\gamma(\alpha) < u^*$.  This is always possible because the
argument used in the proof of Theorem~\ref{t.selfsimilar} shows that
$\Phi_\gamma(\alpha) = u_\gamma^* \to 0$ as $\gamma \to \infty$.  Now
increase $Y$ such that $Y \geq \sqrt \gamma$, if necessary, and set
\begin{equation}
  p_3(x,t) =
  \begin{dcases}
    \frac\gamma{x^2} &
    \text{for } x \ge Y \text{ and } t\ge x^2/\alpha^2 \,, \\
    0 & \text{otherwise} \,.
  \end{dcases}
\end{equation}
Then $p_3 \leq p$ and $p_3$ clearly satisfies the assumptions of
Theorem~\ref{t.linear-asymptotics} with the chosen value of $\gamma$.  

Let $u_3$ denote the weak solution to the linear nonautonomous problem
\eqref{e.auxiliary} with $p=p_3$.  By construction, $u_3$ is a
supersolution to $u$ and by Theorem~\ref{t.linear-asymptotics}, $u_3$
converges uniformly to $\Phi_\gamma$.  This implies that there exists
$T>0$ such that for all $t>T$,
\begin{equation}
  u(x,t) \leq u_3(x,t) 
  \leq \tfrac12 \, (\Phi_\gamma(\alpha)+u^*) < u^* \,.
\end{equation}
Further, due to Lemma~\ref{u.psi}, $u(x,t)<u^*$ for all $(x,t)$ with
$x>\alpha^* \sqrt{T}$ and $t \leq T$.  Combining these two bounds, we
find that $u(x,t)<u^*$ for all $x>\alpha^*\sqrt{T}$.  Therefore,
$p\equiv 0$ in this region.  Contradiction.  This proves that
\eqref{e.p-claim} holds true for every $Y>0$.

To prove \eqref{e.u-claim}, assume the contrary, i.e., that
$\liminf_{x \to \infty} u(x, x^2/\alpha^2) > u^*$.  Then there exists
$Y>0$ such that $u(x, x^2/\alpha^2)>u^*$ for all $x\ge Y$, so that
\begin{equation}
  \essinf_{\substack{x \ge Y \\ t \ge x^2/\alpha^{2}}} p(x,t) = 1 \,.
\end{equation}
As this contradicts \eqref{e.p-claim}, the proof is complete.
\end{proof}

In the supercritical regime, we also have the converse: there is no
precipitation gap of infinite width, i.e., the reaction will always
re-ignite at large enough times.  The following theorem mirrors
\cite[Theorem~3.13]{HilhorstHM:2009:MathematicalSO} but does not
require the technical condition assumed there.

\begin{theorem}[No gap of infinite width in the supercritical regime]
\label{no.interring.infinite}
Let $(u,p)$ be a weak solution to \eqref{e.original} in the
supercritical regime where $u^*<u^*_0<\Psi(\alpha)$. Then there is
ignition of precipitation for arbitrarily large $x$ in the following
sense: for every $Y>0$,
\begin{equation}
  \esssup_{\substack{x \ge Y \\ t\in\R_+}} p(x,t) > 0 \,.
  \label{e.nogap}
\end{equation}
\end{theorem}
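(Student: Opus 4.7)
The plan is to mirror the proofs of Theorems~\ref{self.similar.does.not.exists} and~\ref{no.ring.infinite}: argue by contradiction, construct an explicit supersolution $p_4 \ge p$ satisfying the hypotheses of Theorem~\ref{t.linear-asymptotics}, and derive a contradiction from the resulting asymptotic profile. Suppose, for contradiction, that some $Y>0$ has $p(x,t) = 0$ almost everywhere on $\{|x|\ge Y\}\times\R_+$. Since the supercritical regime enforces $u^* < u_0^* < \Psi(\alpha)$, the constant $\alpha^*$ from \eqref{alpha.star} is well defined and, because $\Psi$ is strictly decreasing for $\eta > \alpha$, satisfies $\alpha^* > \alpha$.

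The candidate supersolution is
\begin{equation*}
  p_4(x,t) = \I_{[-Y,Y]}(x) \, H(t - x^2/(\alpha^*)^2) \,.
\end{equation*}
I would verify $p_4 \ge p$ almost everywhere by a three-case split: on $\{|x|\ge Y\}$ the contradiction hypothesis gives $p = 0 \le p_4$; on $\{|x|<Y,\, t < x^2/(\alpha^*)^2\}$ the point $(x,t)$ lies in $D^*$, so Lemma~\ref{u.psi} yields $p = 0 = p_4$; on $\{|x|<Y,\, t \ge x^2/(\alpha^*)^2\}$, $p_4 = 1 \ge p$ trivially. The design choice is to use $\alpha^*$ rather than $\alpha$ in the Heaviside factor, so that the wedge ahead of the source where $u$ may still exceed $u^*$ is absorbed into $D^*$ and handled by Lemma~\ref{u.psi}. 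The remaining admissibility conditions of Theorem~\ref{t.linear-asymptotics} are then routine: $p_4 \in \mathcal A$, is non-negative and non-decreasing in $t$; condition (P) holds with $p_4^*(x) = \I_{[-Y,Y]}(x)$ since $x^2/\alpha^2 > x^2/(\alpha^*)^2$; $p_4 \equiv 0$ on each $D_{\eta, Y}$ with $\eta > \alpha$ because $x \ge Y$ there; and $\lim_{x\to\infty} x \int_x^\infty p_4^*(\xi)\,\d\xi = 0$, so $\gamma = 0$.

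Let $u_4$ denote the bounded weak solution of the auxiliary problem \eqref{e.auxiliary} with $p = p_4$. Since $p_4 \ge p$, the subsolution principle gives $u_4 \le u$, and Theorem~\ref{t.linear-asymptotics} implies that $u_4$ converges uniformly to $\Phi_0$. By definition of the supercritical regime, $\Phi_0(\alpha) = u_0^* > u^*$, so there exist $\delta > 0$ and $T > 0$ with
\begin{equation*}
  u(\alpha \sqrt t, t) \ge u_4(\alpha \sqrt t, t)
  \ge \Phi_0(\alpha) - \delta > u^* + \delta
\end{equation*}
for every $t > T$. Taking $t$ additionally large enough that $\alpha \sqrt t > Y$ and using the continuity of $u$ from Definition~\ref{weak.sol.def}\ref{weak.ii}, one obtains an open neighborhood of $(\alpha \sqrt t, t)$ on which both $u > u^*$ and $x > Y$; the precipitation condition \eqref{e.hhmo-p-weak-alternative} then forces $p \equiv 1$ on that neighborhood, contradicting the standing assumption $p=0$ a.e.\ on $\{x \ge Y\}$.

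The main place where care is required is the choice of the Heaviside cutoff time $x^2/(\alpha^*)^2$ rather than $x^2/\alpha^2$: with the naive choice one cannot conclude $p_4 \ge p$ on the wedge $\alpha \sqrt t < |x| < \alpha^* \sqrt t$, where $u$ may already have exceeded $u^*$ and $p$ may therefore be positive. Once $p_4$ is chosen correctly, the remainder is a direct invocation of Theorem~\ref{t.linear-asymptotics} together with the defining inequality $u^* < u_0^* = \Phi_0(\alpha)$ of the supercritical regime.
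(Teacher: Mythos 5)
Your proof is correct and follows essentially the same route as the paper: argue by contradiction, dominate $p$ by a compactly supported precipitation function to which Theorem~\ref{t.linear-asymptotics} applies with $\gamma=0$, and conclude that $u$ eventually exceeds $u^*$ along $\eta=\alpha$ because $\Phi_0(\alpha)=u_0^*>u^*$. The only differences are cosmetic: the paper simply takes $p_2=\I_{[-Y,Y]}(x)$ with no Heaviside cutoff at all (this already dominates $p$ and satisfies condition (P), so your more careful $\alpha^*$-wedge construction, while valid, is unnecessary), it additionally sandwiches $u$ with a supersolution converging to $\Phi_0$, and it closes the argument by citing Theorem~\ref{no.ring.infinite} rather than re-running the ignition argument from \eqref{e.hhmo-p-weak-alternative} directly as you do.
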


\begin{proof}
Assume the contrary, i.e., there exists $Y>0$ such that $p=0$ a.e.\ on
$[Y,\infty)\times\R_+$.  We construct the supersolution $u_1$ as in
the proof of Theorem~\ref{self.similar.does.not.exists}.  In
particular, $u_1$ converges uniformly to $\Phi_0$.
 
We set $p_2(x,t)=\I_{[-Y,Y]}(x)$ and let $u_2$ be the associated weak
solution to \eqref{t.linear-asymptotics} with given $p_2$. Since
$p\ge p_2$, $u_2$ is a subsolution of $u$.  Further, $p_2$ satisfies
condition (P) with $p^*_2(x)=\I_{[0,Y]}(x)$.  Hence,
\begin{equation}
  x\int_x^\infty p^*_2(\xi)\,\d\xi = 0\text{ for }x\ge Y
\end{equation}
so that the pair $(u_2,p_2)$ satisfies the conditions of
Theorem~\ref{t.linear-asymptotics} for $\gamma=0$.  Therefore, $u_2$
converges uniformly to $\Phi_0$.

Altogether, $u$ converges uniformly to $\Phi_0$, in particular,
$\lim_{t\to\infty}u(\alpha\sqrt t, t)=\Phi_0(\alpha)=u^*_0>u^*$.  This
contradicts Theorem~\ref{no.ring.infinite}, so \eqref{e.nogap} holds
for every $Y>0$.
\end{proof}


\begin{remark}
\label{r.critical}
Between Theorem~\ref{self.similar.does.not.exists} and
Theorem~\ref{no.interring.infinite}, we cannot say anything about the
critical case when $u_0^*=u^*$.  This case is highly degenerate, so
that both arguments above fail.  We believe that the problem is of
technical nature, i.e., treating the degeneracy in the proof.  We have
no indication that the qualitative behavior is different from the
neighboring cases and conjecture that the asymptotic profile is
$\Phi_0$ as well.
\end{remark}

\begin{lemma}
\label{p=0-on-D.infty}
Let $(u,p)$ be a weak solution to \eqref{e.original}.  Suppose
$\eta\geq \alpha$ and $t_0\geq 0$ are such that
$u(\eta\sqrt t, t) \leq u^*$ for all $t\ge t_0$.  Then
\begin{enumerate}[label={\upshape(\roman*)}]
\item \label{i.p0.i} there exists $z\geq 0$ such that $u<u^*$ and
$p\equiv 0$ in the interior of $D_{\eta, z}$.

\item \label{i.p0.iii} If $\eta=\alpha$ and the bound $u(\eta\sqrt t,
t) < u^*$ for all $t\ge t_0$ holds with strict inequality, then
$u(x,t) < u^*$ and $p(x,t)=0$ for all $x\ge z$ and $t \geq 0$.
\end{enumerate}
\end{lemma}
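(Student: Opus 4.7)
My plan is to work in the region $D_{\eta,z}$ with $z \geq \eta\sqrt{t_0}$ chosen appropriately, and apply maximum-principle comparisons to $u$. In the interior of $D_{\eta,z}$ we have $x/\sqrt{t} > \eta \geq \alpha$, so the $\delta$-source is inactive and $u_t-u_{xx} = -p\,u \leq 0$, making $u$ a subsolution of the heat equation. The parabolic boundary of $D_{\eta,z}$ splits into three pieces: the initial line $t=0$ (where $u=0$); the vertical segment $\{(z,t):0\leq t\leq z^2/\eta^2\}$ (where $u(z,t) \leq \psi(z,t) = \Psi(z/\sqrt t) \leq \Psi(\eta)$); and the portion of the curve $\{(\eta\sqrt t,t):t\geq z^2/\eta^2\}$ (on which the hypothesis yields $u \leq u^*$, using $z \geq \eta\sqrt{t_0}$).

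For part~(i) I would distinguish two cases. If $\Psi(\eta) \leq u^*$ (automatic when $\eta \geq \alpha^*$, and in the subcritical/marginal regimes), then $u \leq u^*$ holds on the entire parabolic boundary, and the strong maximum principle, together with the decay $u \leq \psi \to 0$ as $x\to\infty$ that rules out $u\equiv u^*$, gives $u < u^*$ strictly in the interior. In the harder case $\Psi(\eta) > u^*$, the left-edge bound is insufficient on its own, and I would construct an explicit heat-equation supersolution $\bar u$ dominating $u$ on the parabolic boundary and strictly below $u^*$ on the interior of some $D_{\eta,z'}$ with $z' \geq z$. The key mechanism is that inside $D_{\eta,z}$ the time variable is bounded by $x^2/\eta^2$, so the influence of the left edge (which has fixed time-extent $z^2/\eta^2$) can be made arbitrarily small at points $(x,t)\in D_{\eta,z}$ with $x/z$ large by a heat-kernel tail estimate. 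Once $u<u^*$ on the interior of $D_{\eta,z}$, then $\sup_{s\in[0,t]}u(x,s)<u^*$ for every $(x,t)\in D_{\eta,z}$ with $x\geq z$, and the precipitation relation \eqref{e.hhmo-p-weak-alternative} forces $p\equiv 0$ on $D_{\eta,z}$.

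For part~(ii), I apply part~(i) with $\eta=\alpha$ and combine it with the strict hypothesis on the curve to obtain $u(x,t) < u^*$ for all $x\geq z$ and $0\leq t\leq x^2/\alpha^2$. To extend beyond the curve, I invoke Lemma~\ref{lemma.p.non-decreasing}: since $u-\psi$ is non-increasing in $t$, and since for $t\geq x^2/\alpha^2$ we have $x/\sqrt t\leq\alpha$, so $\psi(x,t) = \Psi(\alpha) = \psi(x,x^2/\alpha^2)$ is constant in $t$ on this range, one obtains
\[
  u(x,t) = \psi(x,t) + [u-\psi](x,t)
  \leq \psi(x,x^2/\alpha^2) + [u-\psi](x,x^2/\alpha^2)
  = u(x,x^2/\alpha^2) < u^* \,.
\]
This yields $u<u^*$ on all of $\{x\geq z\}\times\R_+$, and the precipitation condition then gives $p(x,t)=0$ for every such $(x,t)$. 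The main obstacle is the supersolution construction in the hard case of part~(i); one must quantitatively leverage both the bounded time-extent of the left edge and the constraint $t\leq x^2/\eta^2$ in $D_{\eta,z}$ in order to beat the constant $\Psi(\eta)-u^*$ of over-threshold boundary data, whereas in part~(ii) the monotonicity of $u-\psi$ in $t$ combined with the constancy of $\psi$ above the source curve gives the extension essentially for free.
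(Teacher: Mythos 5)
Your part (ii) and your ``easy case'' of part (i) (when $\Psi(\eta)\le u^*$) follow the paper's argument. The genuine gap is in the case $\Psi(\eta)>u^*$, i.e.\ $\alpha\le\eta<\alpha^*$ --- and this is not a corner case: it is exactly the case $\eta=\alpha$ that part (ii) relies on, since in the transitional and supercritical regimes $u^*<\Psi(\alpha)$. The decay of the left-edge influence that you invoke is real (the caloric measure of the segment $\{z\}\times[0,z^2/\eta^2]$ seen from $(x,t)\in D_{\eta,z}$ is $O(z^2/(x-z)^2)$, using $t\le x^2/\eta^2$), but by itself it only yields $u\le u^*+\varepsilon(x)$ with $\varepsilon(x)\to0$, which gives neither $u<u^*$ nor $p\equiv0$. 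To get the strict bound you must beat the excess coming from the over-threshold left-edge data against the margin $u^*$ minus the contribution of the $u^*$-level data prescribed on the parabolic arc $x=\eta\sqrt t$; that margin degenerates to zero as one approaches the arc, which is precisely where the conclusion $u<u^*$ (hence $p=0$) is needed. A supersolution of the form $u^*\cdot(\text{caloric measure of arc}\cup\text{left edge})$ plus the left-edge excess therefore does not stay below $u^*$ near the arc without a quantitative boundary-Hopf/boundary-Harnack comparison of normal derivatives along the arc at spatial infinity, which your sketch does not supply and which is far from an ``explicit'' construction.

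The paper sidesteps this entirely: it selects $z\ge\eta\sqrt{t_0}$ such that $u(z,t)\le u^*$ for \emph{all} $t\in[0,z^2/\eta^2]$, so the entire parabolic boundary of $D_{\eta,z}$ carries data $\le u^*$ and the strong maximum principle (localized on the cylinders $U_x$ to handle unboundedness) applies directly. The existence of such a $z$ is not free --- it is where Theorem~\ref{no.ring.infinite} enters: if no such $z$ existed, every vertical line $x=z$ with $z\ge\eta\sqrt{t_0}$ would cross the threshold before time $z^2/\eta^2\le z^2/\alpha^2$, forcing $p\equiv1$ on $\{x\ge\eta\sqrt{t_0},\,t\ge x^2/\alpha^2\}$ by \eqref{e.hhmo-p-weak-alternative}, contradicting the ``no ring of infinite width'' statement \eqref{e.p-claim}. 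This appeal to Theorem~\ref{no.ring.infinite} is the missing ingredient in your argument; once it is in place, your case distinction becomes unnecessary.
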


\begin{proof}
Select $z \geq \eta \sqrt{t_0}$ such that $u(z,t) \leq u^*$ for all
$t\in[0,z^2/\eta^2]$.  This is always possible for otherwise, due to
\eqref{e.hhmo-p-weak-alternative}, the solution $(u,p)$ would have a
ring of infinite width.  By assumption, we also have
$u(x,x^2/\eta^2) < u^*$ for all $x\geq z$.  Since $u(x,0)=0$, the
parabolic maximum principle then implies that $u$ takes its maximum on
the boundary of $D_{\eta, z}$ where it is bounded above by
$u^*$, and that $u<u^*$ anywhere in the interior.  This implies $p=0$
in the interior of $D_{\eta, z}$, so that the proof of case
\ref{i.p0.i} is complete.

(To see how this derives from the standard statement of the maximum
principle, take, for every $x \geq z$, the cylinder
\begin{equation}
  U_x = [x, X(x)] \times [0,x^2/\eta^2] \,,
\end{equation}
where, due to the upper bound $u \leq \psi$ from Lemma~\ref{u.psi}, we
can choose $X(x)$ large enough so that the maximum of $u$ on
$\partial U_x$ does not lie on the right boundary.  Then $u$ takes its
maximum on the parabolic boundary of $U_x$; by construction, the
maximum must lie on the left-hand boundary
$\{ (x,t) \colon 0 \leq t \leq x^2/\eta^2 \}$.  Moreover, as $u$
cannot be a constant, it is strictly smaller than its maximum
everywhere in the interior of $U_x$.  Since $x \geq z$ is arbitrary,
the maximum must lie on any of the left side boundaries which is not
itself an interior point for some other $U_x$.  The set of all such
points is contained in the boundary of $D_{\eta, z}$.)

When $\eta=\alpha$, we recall that, by
Lemma~\ref{lemma.p.non-decreasing}, $u(x,t)$ is non-increasing in $t$ for
$t\geq x^2/\alpha^2$.  This implies \ref{i.p0.iii}.
\end{proof}

\begin{theorem}
\label{u.uniform.convergence}
Let $(u,p)$ be a weak solution to \eqref{e.original} with
$u^*<\Psi(\alpha)$.  Assume that $p$ satisfies condition \textup{(P)}
and that there exists $\gamma \geq 0$ such that
\begin{equation}
  \lim_{x \to \infty}
  x \int^{\infty}_x p^*(\xi) \, \d \xi = \gamma \,.
  \label{e.uniform-convergence-assumption}
\end{equation}
Then $u$ converges uniformly to $\Phi_\gamma$.  Furthermore,
$\Phi_\gamma(\alpha)=u^*$ if $\gamma>0$ and $0<\Phi_0(\alpha)\le u^*$
if $\gamma=0$.
\end{theorem}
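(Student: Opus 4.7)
I apply Theorem~\ref{t.linear-asymptotics} directly to the pair $(u,p)$, treating the HHMO-model as an instance of the linear auxiliary problem~\eqref{e.auxiliary}. Of the hypotheses of that theorem, non-negativity, monotonicity of $p$ in time, membership in $\mathcal A$, condition~(P), and the asymptotic \eqref{e.p-assumption} are either given by hypothesis or follow immediately from Definition~\ref{weak.sol.def} and Lemma~\ref{u.psi}. The only non-trivial hypothesis to verify is the cutoff condition: for every $\eta>\alpha$ there exists $y=y(\eta)$ with $p\equiv 0$ on $D_{\eta,y}$. This will be supplied by Lemma~\ref{p=0-on-D.infty}\ref{i.p0.i}, provided we can show $u(\eta\sqrt t,t)\le u^*$ along the ray $x=\eta\sqrt t$ for all $t\ge t_0(\eta)$.

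\textbf{Supersolution.} To secure this bound, I construct a supersolution $\tilde u$ by setting $\tilde p(x,t)=p^*(x)\,H(t-x^2/\alpha^2)$. Monotonicity of $p$ in $t$ combined with condition~(P) gives $\tilde p\le p$ pointwise, so the solution $\tilde u$ of \eqref{e.auxiliary} with $p$ replaced by $\tilde p$ satisfies $u\le\tilde u$ by the supersolution principle. Crucially, $\tilde p\equiv 0$ on $\{t<x^2/\alpha^2\}$, which contains every $D_{\eta,y}$ with $\eta>\alpha$, so the cutoff hypothesis of Theorem~\ref{t.linear-asymptotics} is automatic for $\tilde p$; and $\tilde p^*=p^*$, so the asymptotic hypothesis transfers with the same $\gamma$. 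Theorem~\ref{t.linear-asymptotics} then yields $\tilde u\to\Phi_\gamma$ uniformly.

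\textbf{Closing the loop.} Granting for the moment that $\Phi_\gamma(\alpha)\le u^*$: since $\Phi_\gamma$ is strictly decreasing on $[\alpha,\infty)$, we have $\Phi_\gamma(\eta)<u^*$ for every $\eta>\alpha$, and uniform convergence of $\tilde u$ gives $u(\eta\sqrt t,t)\le\tilde u(\eta\sqrt t,t)<u^*$ for $t\ge t_0(\eta)$. Lemma~\ref{p=0-on-D.infty}\ref{i.p0.i} then supplies the required $y(\eta)$, and a second application of Theorem~\ref{t.linear-asymptotics}, now to $(u,p)$ itself, delivers the uniform convergence $u\to\Phi_\gamma$. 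For the auxiliary statement, positivity of $\Phi_0(\alpha)$ is immediate from \eqref{e.phi0}; for $\gamma>0$, Theorem~\ref{no.interring.infinite} produces an unbounded set on which $p^*>0$, and the ignition condition in \eqref{e.hhmo-p-weak-alternative} forces $\sup_s u(x,s)\ge u^*$ at such points, which together with the just-established uniform convergence pushes $\Phi_\gamma(\alpha)\ge u^*$ up to equality.

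\textbf{Main obstacle.} The difficulty is the \emph{a priori} bound $\Phi_\gamma(\alpha)\le u^*$. The supersolution controls $u$ only from above, so $\Phi_\gamma(\alpha)>u^*$ is not immediately incompatible with $u\le\tilde u$. The argument must proceed by contradiction: if $\Phi_\gamma(\alpha)>u^*$, uniform convergence of $\tilde u$ places it above $u^*+\delta$ on a full neighborhood of $\eta=\alpha$ in similarity coordinates for large $s$. The Lipschitz estimate of Lemma~\ref{l.Lipschitz}, together with condition~(P), the monotonicity of $p$ in $t$, and the HHMO ignition condition \eqref{e.hhmo-p-weak-alternative}, must then be used to translate this excess into pointwise information about $p^*$ along a sufficiently dense sequence $x_k\to\infty$, so as to force $x\int_x^\infty p^*(\xi)\,\d\xi\to\infty$ and contradict $\gamma<\infty$. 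Making this propagation-of-ignition estimate quantitative, with the ignition region known only up to a sliver $\alpha<\eta\le\alpha^*$, is the technical core of the proof.
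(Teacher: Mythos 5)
Your conditional argument is sound: granting $\Phi_\gamma(\alpha)\le u^*$ and the cutoff hypothesis, the application of Theorem~\ref{t.linear-asymptotics} to $(u,p)$ via Lemma~\ref{p=0-on-D.infty}\ref{i.p0.i} is exactly how the paper concludes, and your treatment of the final claim ($\Phi_\gamma(\alpha)=u^*$ for $\gamma>0$) is workable, though the paper gets it more cheaply from Lemma~\ref{p=0-on-D.infty}\ref{i.p0.iii}: if $\Phi_\gamma(\alpha)<u^*$ then $p^*$ vanishes for large $x$, so $x\int_x^\infty p^*\,\d\xi=0$ eventually, contradicting $\gamma>0$. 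But the step you flag as the ``technical core'' is a genuine gap, and the route you sketch for closing it cannot work. The supersolution $\tilde u$ bounds $u$ only from above, so the statement ``$\tilde u>u^*+\delta$ near $\eta=\alpha$'' carries \emph{no} ignition information whatsoever: ignition of $p$ via \eqref{e.hhmo-p-weak-alternative} requires a \emph{lower} bound on $u$ exceeding $u^*$, and no amount of Lipschitz regularity or monotonicity converts an upper bound above threshold into such a lower bound. A matching subsolution would require knowing where $p$ is supported, which is precisely what you are trying to establish --- the argument is circular as sketched.

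The paper's proof dispenses with the supersolution and with any a priori bound on $\Phi_\gamma(\alpha)$. It establishes the cutoff directly by contradiction: if for some $\eta_*>\alpha$ one has $p>0$ arbitrarily far out in $D_{\eta_*,y}$, then the contrapositive of Lemma~\ref{p=0-on-D.infty}\ref{i.p0.i} already supplies a sequence $t_i\to\infty$ with $u(x_i,t_i)\ge u^*$ at $x_i=\eta_*\sqrt{t_i}$ --- genuine pointwise lower-bound information on $u$ itself. The parabolic maximum principle on cylinders $(x,X)\times(0,t_i)$ then propagates this ignition \emph{inward}: for every $x\in(\alpha\sqrt{t_i},x_i)$ one gets $\max_{t\le t_i}u(x,t)>u(x_i,t_i)\ge u^*$, hence $p^*(x)=1$ on the whole interval. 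Since that interval has length $(\eta_*-\alpha)\sqrt{t_i}$, it follows that
\begin{equation*}
  \alpha\sqrt{t_i}\int_{\alpha\sqrt{t_i}}^{\infty}p^*(\xi)\,\d\xi
  \;\ge\; \alpha\sqrt{t_i}\,(\eta_*-\alpha)\sqrt{t_i}\;\to\;\infty\,,
\end{equation*}
contradicting \eqref{e.uniform-convergence-assumption} directly. This is the quantitative ``propagation of ignition'' you were looking for; it comes from the maximum principle applied to $u$, not from the asymptotics of any comparison solution. If you want to salvage your write-up, replace the supersolution construction and the $\Phi_\gamma(\alpha)\le u^*$ detour with this maximum-principle argument.
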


\begin{proof}
We shall show that for every $\eta>\alpha$ there exists $y$ such that
$p\equiv0$ on $D_{\eta,y}$.  Uniform convergence of $u$ to
$\Phi_\gamma$ is then a direct consequence of
Theorem~\ref{t.linear-asymptotics}.  To do so, assume the contrary,
i.e., that there exists $\eta_*>\alpha$ such that for every $y \in \R$
we have $p>0$ somewhere in $D_{\eta^*,y}$.  Due to
Lemma~\ref{p=0-on-D.infty}, this implies that there exists a sequence
$t_i\to\infty$ such that $u(x_i, t_i) \ge u^*$ with
$x_i=\eta_*\sqrt{t_i}$.  

We now claim that $p^*(x)=1$ for every $x \in (\alpha \sqrt{t_i}, x_i)$.
To prove the claim, fix $t_i$ and choose $X$ large enough such that
$\max_{t\in[0,t_i]}u(X,t)\le \psi(X,t_i)<u^*/2$.  Fix
$x \in (\alpha \sqrt t, x_i)$ and consider the cylinder
$U=(x, X)\times(0,t_i)$ with parabolic boundary $\Gamma$.  By the
parabolic maximum principle,
\begin{equation}
\label{p=1}
   \max_{\Gamma}u=\max_{\bar U}u\ge u(x_i,t_i)
\end{equation}
with equality only if $u$ is constant, which is incompatible with the
initial condition.  Hence, 
\begin{equation}
   \max_{t\in[0,t_i]} u(x,t) > u(x_i,t_i) \ge u^* \,.
\end{equation}
Since $p$ satisfies \eqref{e.hhmo-p-weak-alternative}, this implies
$p(x,t_i) = p^*(x) = 1$ as claimed.  Next, for $z_i=\alpha\sqrt{t_i}$,
we estimate
\begin{equation}
   z_i \int_{z_i}^\infty p^*(\xi)\,\d\xi
  \ge z_i\int_{z_i}^{x_i} p^*(\xi)\,\d\xi
  = z_i \, (x_i-z_i) = \eta_* \, (\eta_*-\alpha) \, t_i
  \to \infty
\end{equation}
as $i \to \infty$.  This contradicts
\eqref{e.uniform-convergence-assumption}.  We conclude that $p\equiv0$
on $D_{\eta_*,y}$ for some $y>0$.

To prove the final claim of the theorem, we note that
$\Phi_\gamma(\alpha)>u^*$ would imply the existence of a ring with
infinite width, which is impossible due to
Theorem~\ref{no.ring.infinite}.  Hence,
$0<\Phi_\gamma(\alpha)\le u^*$.  When $\gamma=0$, this is all that is
claimed.  So supposed that $\gamma>0$ and $\Phi_\gamma(\alpha)< u^*$.
Then Lemma~\ref{p=0-on-D.infty}\ref{i.p0.iii} implies that
$p(\xi,t)=p^*(\xi)=0$ for all $\xi$ big enough, say, when $\xi\ge R$,
and therefore
\begin{equation}
  x\int^{\infty}_x p^*(\xi) \, \d \xi = 0
\end{equation}
for $x\ge R$, contradicting \eqref{e.uniform-convergence-assumption}.
Hence, $\Phi_\gamma(\alpha)=u^*$ when $\gamma>0$.
\end{proof}

We now prove a result which provides a converse to
Theorem~\ref{u.uniform.convergence}.  We assume that a solution to
\eqref{e.original} has limit profile in $\eta$-$s$ coordinates and
conclude that this limit can only be the self-similar profile
$\Phi_\gamma(\eta)$ from \eqref{e.phi.gamma}.

\begin{theorem}
\label{limit.profile.th}
Let be $(u,p)$ a weak solution to \eqref{e.original} with
$u^*<\Psi(\alpha)$.  Assume that $p$ satisfies condition \textup{(P)}
and that for a.e.\ $\eta \geq 0$ the limit
\begin{equation}
  V(\eta)=\lim_{t\to\infty}u(\eta\sqrt t,t)
  = \lim_{s \to \infty} v(\eta,s)
  \label{e.limit-assumption}
\end{equation}
exists.  Then the limits
\begin{equation}
  \gamma = \lim_{x \to \infty} \frac1x \int_0^x \xi^2 \, p^*(\xi) \, \d\xi
\end{equation}
and
\begin{equation}
  \gamma = \lim_{x \to \infty} x \int_x^\infty p^*(\xi) \, \d\xi
\end{equation}
exist and are equal, $V(\eta)=\Phi_\gamma(\eta)$, and $u$ converges uniformly to
$\Phi_\gamma$.  Further, $\Phi_\gamma(\alpha)=u^*$ if $\gamma>0$ and
$0<\Phi_\gamma\le u^*$ if $\gamma=0$.
\end{theorem}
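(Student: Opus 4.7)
The plan is to reduce everything to an application of Theorem~\ref{u.uniform.convergence}, whose sole hypothesis on $p$ is the existence of a limit $\gamma = \lim_{x\to\infty}\Gamma(x)$, where $\Gamma(x) := x\int_x^\infty p^*(\xi)\, \d\xi$. Once such a $\gamma$ is known to exist, Theorem~\ref{u.uniform.convergence} immediately yields uniform convergence $u \to \Phi_\gamma$ (and hence $V = \Phi_\gamma$), while the Ces\`aro-form limit follows by the integration by parts
\begin{equation*}
  \frac1x\int_0^x \xi^2 p^*(\xi)\, \d\xi
  = -\Gamma(x) + \frac2x \int_0^x \Gamma(\xi)\, \d\xi \,,
\end{equation*}
together with $\Gamma(\xi)\to\gamma$; the boundary values of $\Phi_\gamma(\alpha)$ follow from Theorem~\ref{u.uniform.convergence} combined with the strict lower bound $u>0$ of Lemma~\ref{u.psi}.

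The argument then splits on the value of $V(\alpha)$, which can be defined meaningfully by approximation from a sequence $\eta_n \searrow \alpha$ of good points (where the hypothesis ensures convergence). Theorem~\ref{no.ring.infinite} forces $V(\alpha) \le u^*$. If $V(\alpha) < u^*$, then Lemma~\ref{p=0-on-D.infty}\ref{i.p0.iii} forces $p^*$ to vanish on a half-line, so $\Gamma \equiv 0$ eventually and $\gamma = 0$; Theorem~\ref{u.uniform.convergence} closes this case with $V = \Phi_0$ and $\Phi_0(\alpha) \le u^*$.

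In the main case $V(\alpha) = u^*$, I would first show $\Gamma$ is bounded by constructing sub-solutions of the auxiliary problem~\eqref{e.auxiliary} using truncations $p^{(1)}(x,t) = p^*(x)\,H(t - x^2/\alpha^2) \le p$ and invoking Theorem~\ref{t.linear-asymptotics}: if $\Gamma(x_n) \to \infty$, the resulting sub-solutions would force $V(\alpha) \le \Phi_{\gamma_n}(\alpha) \to 0$, contradicting $V(\alpha) = u^* > 0$. With $\Gamma$ bounded, for any subsequence $s_n \to \infty$ with $\Gamma(\alpha s_n) \to \gamma$ I would re-run the integrated-energy argument of Theorem~\ref{t.linear-asymptotics} with $w = v - \Phi_\gamma$, but integrated only over windows $[\omega s_n, s_n]$ with $\omega\in(0,1)$ close to one. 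The key technical point is that the error term $G(s)$ in that proof need only be controlled on time-average over such windows, which is achievable from the boundedness of $\Gamma$ and the monotonicity of $P$. The bootstrap from $L^r$ to uniform convergence then delivers $v(\cdot,s_n) \to \Phi_\gamma$ uniformly; together with $v \to V$ a.e., this gives $V = \Phi_\gamma$. Since $V$ is the same for every subsequence, $\gamma$ is uniquely determined by $V$, and therefore $\lim_{x\to\infty} \Gamma(x) = \gamma$ exists.

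The principal obstacle is the sub-sequential energy estimate. Theorem~\ref{t.linear-asymptotics} relies on the \emph{pointwise} decay of the error $G(s)$, which is driven by $\Gamma(x)\to\gamma$ as $x\to\infty$; here we only have that decay along a subsequence, so pointwise decay of $G$ must be replaced by averaged-over-windows estimates, and the contradiction step at the end of that proof must be carried out along $\{s_n\}$. This bookkeeping is the real technical work; once it is in place, the remaining pieces---integration by parts, Ces\`aro conversion, and the identification of $\Phi_\gamma(\alpha)$---are routine.
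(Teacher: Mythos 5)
The overall target is right---establish $\lim_{x\to\infty}\Gamma(x)=\gamma$ for $\Gamma(x)=x\int_x^\infty p^*(\xi)\,\d\xi$, then invoke Theorem~\ref{u.uniform.convergence} and convert to the Ces\`aro form via $h(x)=2\bar\Gamma(x)-\Gamma(x)$---but the route you propose to the \emph{existence} of $\gamma$ has a genuine gap at its central step. Your plan is to pick a subsequence $s_n$ with $\Gamma(\alpha s_n)\to\gamma$ and re-run the energy argument of Theorem~\ref{t.linear-asymptotics} over windows $[\omega s_n,s_n]$. But the error terms $G(s)$ and $F(s)$ in that argument involve $g(\eta,s)=(\Gamma(\eta s)-\gamma)/\eta$ integrated over $\eta\in[\eta_0,\alpha]$ (and $\Gamma$ evaluated between $\alpha s$ and $\eta^* s$ for $F$); controlling their averages over the window requires $\Gamma(x)\approx\gamma$ for \emph{all} $x$ in the multiplicatively thick range $[\eta_0\omega s_n,\,\eta^*s_n]$, and convergence of $\Gamma$ along the single sequence $\{\alpha s_n\}$ gives no information there. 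Boundedness of $\Gamma$ and monotonicity of $p$ in time do not repair this ($\Gamma$ itself has no monotonicity), so the ``bookkeeping'' you defer is not bookkeeping but the missing idea. There is also a circularity in your preliminary step: to rule out unbounded $\Gamma$ you need a \emph{super}solution (larger $p$ gives smaller $u$, so $p^{(1)}\le p$ yields $u^{(1)}\ge u$), and to conclude $V(\alpha)\le\Phi_{\gamma_1}(\alpha)$ you must apply Theorem~\ref{t.linear-asymptotics} to $p^{(1)}$---whose hypothesis is precisely that $\lim_x\Gamma_{p^{(1)}}(x)$ exists, i.e.\ the statement you are trying to prove; for a general $p^*$ with $\limsup\Gamma=\infty$ there is no obvious minorant with a well-defined large limit.

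The paper closes this gap by working in the opposite order. It first proves that the Ces\`aro limit $\gamma=\lim_x\frac1x\int_0^x\xi^2p^*(\xi)\,\d\xi$ exists, directly from the weak formulation in similarity variables: the time-averaged functional $A(S;S_0,f)$ of \eqref{e.Adef.eta-s} has a limit as $S\to\infty$ because every other term in the weak identity converges (using the assumed pointwise limit $V$, monotonicity of $u-\psi$, and a local Lipschitz bound on $W=V-\Psi$ extracted from the same identity, which also makes $V(\alpha)$ well defined); testing with bumps of arbitrarily narrow support inside an interval where $V>0$ then squeezes $\gamma^+\eta_0\le\gamma^-\eta_1$ and forces $\gamma^+=\gamma^-<\infty$. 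Only afterwards does it upgrade $h(x)\to\gamma$ to $\Gamma(x)\to\gamma$ by the Tauberian use of $h=2\bar\Gamma-\Gamma$ (your integration by parts runs the easy direction; the paper needs and proves the converse), and finally applies Theorem~\ref{u.uniform.convergence}. Some such averaging step based on the weak formulation appears unavoidable, and you would need to supply it to make your argument close.
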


\begin{proof}
Write $U_V$ to denote the domain of definition of $V$, change the
coordinate system into $\eta=x/\sqrt t$ and $s=\sqrt t$, and set
$w=v-\Psi$ and $W=V-\Psi$.  As detailed in Appendix~\ref{a.weak}, the
weak formulation of the HHMO-model in these similarity variables can
be stated as
\begin{align}
  A(S; S_0, f)
  & = \frac{S_0}S\int_\R w(\eta, S_0) \, f(\eta) \, \d\eta\notag 
    - \int_\R w(\eta, S) \, f(\eta) \, \d\eta\notag\\
  & \quad
    - \frac1S\int_{S_0}^S\int_\R \eta \, w \, f_\eta \, \d \eta \, \d s
    - \frac2{S}\int_{S_0}^S\int_\R w_\eta \, f_\eta\, \d \eta \, \d s
  \label{A.equation.eta-s}
\end{align}
for all $0<S_0<S$ and $f \in H^1(\R)$ with compact support, where
\begin{equation}
  A(S; S_0, f)
  = \frac2S \int_{S_0}^S \int_\R s^2 \, q(\eta, s) \,
    v(\eta, s) \, f(\eta) \, \d \eta \, \d s \,.
  \label{e.Adef.eta-s}
\end{equation}
Writing
\begin{align}
  A(S; S_0, f)
  & = \frac{S_0}S\int_\R w(\eta, S_0) \, f(\eta) \, \d\eta 
    - \int_\R w(\eta, S) \, f(\eta) \, \d\eta\notag\\
  & \quad 
    - \frac1S\int_{S_0}^S\int_\R \eta \, w \,  f_\eta \,\d \eta \, \d s
    + \frac2{S}\int_{S_0}^S\int_\R w \, f_{\eta\eta}\, \d \eta \, \d s \,,
  \label{A.equation.temporary.eta-s}
\end{align}
we observe that the limit $S\to\infty$ exists for each term on the
right of \eqref{A.equation.temporary.eta-s}, so that
$\lim_{S\to\infty} A(S; S_0, f)$ exists for $S_0$ and $f$ fixed.
Moreover, for every $b>0$ fixed, definition \eqref{e.Adef.eta-s}
implies that $A(S;S_0,f)$ is bounded uniformly for all $S \geq S_0$
and $f\in L^1$ that satisfy $0\le f\le \I_{[-b,b]}$.  Indeed, if
$g \geq \I_{[-b,b]}$ is smooth with compact support, then
\begin{equation}
  A(S; S_0, f)\le A(S; S_0, g)\le \sup_{S\ge S_0} A(S; S_0, g) < \infty
\end{equation}
since $\lim_{S\to\infty} A(S; S_0, g)$ exists.

By Lemma~\ref{lemma.p.non-decreasing}, $u-\psi$ is non-increasing in
time $t$ for $x$ fixed. This implies that, in $\eta$-$s$ coordinates,
for $\eta_1, \eta_2 \in U_V$ with $0<\eta_1 < \eta_2$,
\begin{equation}
  W(\eta_1)
  = \lim_{s \to \infty} w(\eta_1,s)
  \leq \lim_{s \to \infty} w(\eta_2,s)
  = W(\eta_2) \,,
  \label{e.lim-comparison.eta-s}
\end{equation}
and for any fixed $\eta\in(\eta_1,\eta_2)$ 
\begin{align}
  W(\eta_1)
  & = \lim_{s \to \infty} w(\eta_1,s)
    \leq \liminf_{s \to \infty} w(\eta,s) \notag\\
  & \leq \limsup_{s \to \infty} w(\eta,s)
    \leq \lim_{s \to \infty} w(\eta_2,s) = W(\eta_2) \,.
  \label{e.lim-comparison.middle.eta-s}
\end{align}

By Lemma~\ref{l.u0}, $V(0) = 0$, so that
\begin{equation}
  W(0)=V(0)-\Psi(0)=-\Psi(0)\le V(\eta)-\Psi(\eta)=W(\eta)
\end{equation}
for all $\eta\in U_V$.  Altogether, we find that $W=V-\Psi$ is
non-decreasing on $U_V$.

Now we will show that $W$ is locally Lipschitz continuous on $U_V$.
Fix $b>0$.  For every $\eta_0\in[0,b]$, take the family of compactly
supported test functions $f_\eps(\eta)$ whose derivative is given by
\begin{equation}
  f_\eps'(\eta)
  = \begin{dcases}
      \eps^{-1} & \text{for } \eta \in[-\eps,0] \,, \\
      - \eps^{-1} & \text{for } \eta\in[\eta_0,\eta_0+\eps] \,, \\
      0 & \text{ otherwise} \,.
    \end{dcases}
\end{equation}
We insert $f_\eps$ into \eqref{A.equation.eta-s} and let $\eps\searrow0$.
Clearly,
\begin{equation}
  \lim_{\eps\searrow0} A(S;S_0,f_\eps)=A(S;S_0,\I_{[0,\eta_0]}(\eta))
\end{equation}
and
\begin{equation}
  \int_\R w(\eta, s)f(\eta)\,\d \eta
  \to \int_0^{\eta_0} w(\eta, s)\,\d \eta \,.
\end{equation}
Moreover,
\begin{equation}
  \int_\R \eta \, w(\eta,s) \, f_\eps'(\eta)\, \d\eta
  \to - \eta_0 \, w(\eta_0,s)
\end{equation}
and
\begin{equation}
  \int_\R w_\eta(\eta, s) \, f_\eps'(\eta)\, \d\eta
  \to w_\eta(0,s) - w_\eta(\eta_0,s) = - w_\eta(\eta_0,s) \,.
\end{equation}
(Recall that $w_\eta$ is space-time continuous due to the definition
of weak solution.)  Altogether, we find that \eqref{A.equation.eta-s}
converges to
\begin{align}
  A(S; S_0, \I_{[0,\eta_0]}(\eta))
  & = \frac{S_0}S\int_0^{\eta_0} w(\eta, S_0)\, \d\eta 
     - \int_0^{\eta_0} w(\eta, S)\, \d\eta\notag\\
  & \quad 
    + \frac{\eta_0}S\int_{S_0}^Sw(\eta_0, s)\, \d s
    + \frac2{S}\int_{S_0}^S
      w_\eta(\eta_0, s) \, \d s \,.
\end{align}
Noting that $0\le \I_{[0,\eta_0]}\le f_\eps \le \I_{[-1,b+1]}$ for
$0<\eps\leq 1$, we see that the left hand side is bounded uniformly
for all $\eta_0\in[0,b]$ and $S \geq S_0$.  By direct inspection, so
are the first three terms on the right hand side.  We conclude
that
\begin{equation}
  \frac1S \int_{S_0}^S w_\eta(\eta_0,s) \, \d s \leq C_b
\end{equation}
for some constant $C_b$ independent of $\eta_0\in[0,b]$ and
$S\ge S_0$.  Then, for any pair $\eta_1,\eta_2 \in U_V\cap[0,b]$ with
$\eta_1<\eta_2$,
\begin{align}
  0 \leq W(\eta_2)-W(\eta_1)
  & = \lim_{S\to\infty} \frac1S \int_{S_0}^S w(\eta_2,s) \, \d s
      - \lim_{S\to\infty} \frac1S\int_{S_0}^S w(\eta_1,s) \, \d s
      \notag \\
  & = \lim_{S\to\infty} \frac1S \int_{S_0}^S \int_{\eta_1}^{\eta_2}
        w_\eta(\eta,s) \, \d\eta \, \d s
      \notag \\
  & = \lim_{S\to\infty} \int_{\eta_1}^{\eta_2}
        \frac1S \int_{S_0}^S w_\eta(\eta,s) \, \d s \, \d\eta
      \notag \\
  & \le C_b \, \abs{\eta_2-\eta_1} \,.
\end{align}
Due to \eqref{e.lim-comparison.middle.eta-s}, we conclude that $W$ is
locally Lipschitz continuous, defined on $U_V=\R_+$, and
non-decreasing.  In particular, $V(\alpha)$ is well-defined and
strictly positive.  To see the latter, suppose the contrary, i.e.,
that $V(\alpha)=0$.  Then Lemma~\ref{p=0-on-D.infty}\ref{i.p0.iii}
implies that $p(x,t)=0$ for all $x$ large enough. It follows that we
can take $\gamma=0$ in Theorem~\ref{t.linear-asymptotics} to conclude
that $V=\Phi_0$, contradicting $V(\alpha)=0$.

Since $V(\alpha)>0$, there is a neighborhood
$I=(\eta_0,\eta_1)\subset (0,\alpha)$ such that $V>\frac12V(\alpha)>0$
on $I$.  Further, set
\begin{subequations}
\begin{gather}
  v_+(\eta;S_0) = \sup_{s\ge S_0}v(\eta,s) \,, \\
  v_-(\eta;S_0) = \inf_{s\ge S_0}v(\eta,s) 
\end{gather}
\end{subequations}
and choose $S_0^*$ large enough such that
$v_-(\eta_0, S_0^*)>\frac12V(\eta_0)>0$. Since $u-\psi$ is
non-increasing in time in $x$-$t$ coordinates and $\Psi$ is constant
on $I$, we have $v_+(\eta;S_0)\ge v_-(\eta;S_0)\ge\frac12V(\eta_0)$
for all $S_0\ge S_0^*$ and $\eta \in I$.  Take $g \in H^1(\R)$ with
$\supp g\subset I$.  Noting that, due to \eqref{p.property},
$q(\eta,s) = p^*(\eta s)$, we estimate
\begin{align}
  A(S; S_0, g)
  & = \int_{\eta_0}^{\eta_1} \frac2S\int_{S_0}^{S}
	s^2 \, p^*(\eta s) \, v(\eta,s) \,
        g(\eta) \, \d s \, \d \eta
      \notag \\
  & \le \int_{\eta_0}^{\eta_1} g(\eta) \, v_+(\eta; S_0) \,
	\frac2S\int_{S_0}^{S} s^2 \, p^*(\eta s) \,\d s \, \d \eta
      \notag \\
  & = \int_{\eta_0}^{\eta_1} \frac{g(\eta)}{\eta^3} \,  
	v_+(\eta; S_0) \, \frac2S\int_{S_0\eta}^{S\eta}
        \xi^2 \, p^*(\xi)\,
        \d\xi \, \d\eta
      \notag \\
  & \le \int_{\eta_0}^{\eta_1} \frac{2g(\eta)}{\eta^3} \,  
	v_+(\eta; S_0) \, \d \eta \,
        \frac1S\int_{0}^{S\eta_1} \xi^2 \, p^*(\xi)\, \d\xi 
\end{align}
where, in the second equality, we have used the change of variables 
$\xi=s\eta$.  Taking $\liminf_{S \to \infty}$, we infer that
\begin{equation}
  \lim_{S\to\infty} A(S; S_0, g)
  \le \gamma^- \, \eta_1
      \int_{\eta_0}^{\eta_1} \frac{2g(\eta)}{\eta^3} \,
        v_+(\eta;S_0) \, \d\eta \,,
  \label{e.ubound.eta-s}
\end{equation}
where 
\begin{equation}
  \gamma^-
  = \liminf_{S\to\infty} \frac1S \int_0^S \xi^2 \,
      p^*(\xi) \, \d\xi \,. 
\end{equation}
Similarly,
\begin{align}
  A(S; S_0, g)
  & \ge \int_{\eta_0}^{\eta_1} g(\eta)\, v_-(\eta;S_0)\,
	\frac2S\int_{S_0}^Ss^2 \, 
	p^*(\eta s) \, \d s \, \d\eta
      \notag \\
  & = \int_{\eta_0}^{\eta_1} \frac{g(\eta)}{\eta^3} \,
        v_-(\eta;S_0) \, 
        \frac2S \int_{S_0\eta}^{S\eta}
        \xi^2 \, p^*(\xi) \, \d\xi \, \d\eta 
      \notag \\
  & \ge \int_{\eta_0}^{\eta_1} \frac{2g(\eta)}{\eta^3} \,
        v_-(\eta;S_0) \, \d\eta \, 
        \frac1S \int_{S_0\eta_1}^{S\eta_0}
        \xi^2 \, p^*(\xi) \, \d\xi \,,
\end{align}
so that
\begin{equation}
  \lim_{S\to\infty} A(S; S_0, g)
  \ge \gamma^+ \, \eta_0\int_{\eta_0}^{\eta_1}
      \frac{2g(\eta)}{\eta^3} \, v_-(\eta;S_0) \, \d\eta
  \label{e.lbound.eta-s}
\end{equation}
with
\begin{equation}
  \gamma^+
  = \limsup_{S\to+\infty} \frac1S \int_0^S \xi^2 \,
      p^*(\xi) \, \d\xi
  \geq \gamma_- \,.
\end{equation}
Equation \eqref{e.lbound.eta-s} also implies that $\gamma^+<\infty$.
Since the bounds \eqref{e.ubound.eta-s} and \eqref{e.lbound.eta-s} are
valid for arbitrary $S_0\ge S_0^*$, we can now let $S_0 \to \infty$,
so that
\begin{equation}
  \gamma^+ \, \eta_0 \int_{\eta_0}^{\eta_1}
    \frac{2g(\eta)}{\eta^3} \, V(\eta) \, \d\eta
  \le \gamma^- \, \eta_1 \int_{\eta_0}^{\eta_1}
    \frac{2g(\eta)}{\eta^3} \, V(\eta) \, \d\eta \,.
\end{equation}
Since $V>0$ on $I$, we can divide out the integral to conclude that
$\gamma_+ \, \eta_0 \le \gamma_- \, \eta_1$.  Further, we can take
$\eta_0$ and $\eta_1$ arbitrary close to each other by taking a test
function $g$ with arbitrarily narrow support, so that
$\gamma_+=\gamma_-$ and both are equal to
\begin{equation}
  \gamma = \lim_{S\to\infty} \frac1S \int_0^S \xi^2 \,
    p^*(\xi) \, \d\xi < \infty \,.
  \label{e.gamma}
\end{equation}

To proceed, we define
\begin{equation}
  \Gamma(x)
  = x\int_x^\infty p^*(\xi) \, \d \xi
\end{equation}
as in the proof of Theorem~\ref{t.linear-asymptotics}, introduce its
average
\begin{equation}
  \bar \Gamma(x)
  = \frac1x \int_0^x \Gamma(\xi) \, \d \xi \,,
\end{equation}
and set
\begin{equation}
  h(x)
  = \frac1x \int_0^x \xi^2 \, p^*(\xi) \, \d \xi \,.
\end{equation}
In \eqref{e.gamma}, we have already shown that $h(x) \to \gamma$ as
$x \to \infty$.  It remains to prove that $\Gamma(x) \to \gamma$ as
well.  We first note that $p^*$ is integrable so that $\Gamma$ is
well-defined.  To see this, we write
\begin{equation}
  \int_1^x p^*(\xi) \, \d \xi
  = \int_1^x \frac1{\xi^2} \, \xi^2 \, p^*(\xi) \, \d \xi
  = \frac{h(x)}x - h(1)
    + 2 \int_1^x \frac{h(\xi)}{\xi^2} \, \d \xi \,,
\end{equation}
where we have integrated by parts, noting that $x\, h(x)$ is an
anti-derivative of $x^2\, p^*(x)$.  As $h(x)$ converges and $p^*$ is
non-negative, $p^*$ is integrable on $\R_+$.

Next, by direct calculation,
\begin{equation}
  \xi^2 \, p^*(\xi) = \Gamma(\xi) - \xi \, \Gamma'(\xi) \,.
\end{equation}
Inserting this expression into the definition of $h$ and integrating
by parts, we find that
\begin{equation}
  h(x) = 2 \, \bar \Gamma(x) - \Gamma(x)
  \label{e.limit-identity1}
\end{equation}
First, divide \eqref{e.limit-identity1} by $x$ and observe that
$h(x)/x$ and $\Gamma(x)/x$ both converge to zero as $x\to\infty$.
Consequently,
\begin{equation}
  \lim_{x \to \infty} \frac{\bar \Gamma(x)}x = 0 \,.
  \label{e.bargammalimit}
\end{equation}
Second, note that \eqref{e.limit-identity1} can be written in the form
\begin{equation}
 \frac{h(x)}{x^2} = -\frac{\d}{\d x} \frac{\bar \Gamma(x)}x \,.
\end{equation}
Integrating from $x$ to $\infty$ and using \eqref{e.bargammalimit}, we
find that
\begin{equation}
  \bar \Gamma(x)
  = x \int_x^\infty \frac{h(\xi)}{\xi^2} \, \d \xi \,.
\end{equation}
Since $h(x) \to \gamma$, this expression converges to $\gamma$ by
l'H\^opital's rule.  Thus, by \eqref{e.limit-identity1},
$\Gamma(x) \to \gamma$ as well.  We recall that, due to
\cite[Lemma~3.5]{HilhorstHM:2009:MathematicalSO}, $p$ has at least one
non-degenerate precipitation region, so $p$ is non-zero.  Hence, we
can finally apply Theorem~\ref{u.uniform.convergence} which asserts
uniform convergence of $u$ to $\Phi_\gamma$.
\end{proof}

We summarize the results of this section in the following theorem.

\begin{theorem}
\label{convergence.conclusion}
Let $(u,p)$ be a weak solution to \eqref{e.original} with
$u^*<\Psi(\alpha)$.  Assume that $p$ satisfies condition \textup{(P)}.
Then the following statements are equivalent.
\begin{enumerate}[label={\upshape(\roman*)}]
\item\label{equiv.0}
$\displaystyle \lim_{x \to \infty} \frac1x \int_0^x \xi^2 \, p^*(\xi)
\, \d \xi = \gamma$,

\item\label{equiv.i}
$\displaystyle \lim_{x \to \infty} x \int^{\infty\vphantom\int}_x
p^*(\xi) \, \d \xi = \gamma$,

\item\label{equiv.ii} $u$ converges uniformly to $\Phi_\gamma$ with
$\Phi_\gamma(\alpha)=u^*$ if $\gamma>0$ and
$0<\Phi_\gamma(\alpha)\le u^*$ if $\gamma=0$,

\item\label{equiv.iii} $u$ converges to some limit profile $V$
pointwise a.e.\ in $\eta$-$s$ coordinates.
\end{enumerate}
\end{theorem}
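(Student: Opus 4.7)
Overall, the plan is to close a cycle of implications (ii) $\Rightarrow$ (iii) $\Rightarrow$ (iv) $\Rightarrow$ (i) $\Rightarrow$ (ii), leaning heavily on the two preceding theorems of this section. The only implication requiring genuinely new work is the final one, (i) $\Rightarrow$ (ii), and even this reduces to calculations already carried out at the end of the proof of Theorem~\ref{limit.profile.th}.

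First, (ii) $\Rightarrow$ (iii) follows by invoking Theorem~\ref{u.uniform.convergence} directly, since its hypothesis \eqref{e.uniform-convergence-assumption} is exactly (ii) and its conclusion is precisely (iii). Next, (iii) $\Rightarrow$ (iv) is immediate because uniform convergence of $v(\,\cdot\,,s) \to \Phi_\gamma$ as $s \to \infty$ forces pointwise convergence a.e.\ with the same limit. For (iv) $\Rightarrow$ (i) and (iv) $\Rightarrow$ (ii), I would quote Theorem~\ref{limit.profile.th}, whose conclusion delivers (i), (ii), and uniform convergence to $\Phi_\gamma$ with the prescribed boundary behavior in one stroke, with a common value of $\gamma$.

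It remains to close the cycle via (i) $\Rightarrow$ (ii), which is a purely real-analytic statement converting a Ces\`aro-type limit into a tail-integral limit. Writing $h(x) = \tfrac{1}{x}\int_0^x \xi^2 \, p^*(\xi)\,\d\xi$ and assuming $h(x)\to\gamma$, I would first establish the integrability of $p^*$ via the integration-by-parts identity
\begin{equation*}
  \int_1^x p^*(\xi) \, \d\xi = \frac{h(x)}{x} - h(1) + 2\int_1^x \frac{h(\xi)}{\xi^2} \, \d\xi,
\end{equation*}
whose right-hand side is bounded uniformly in $x$ because $h$ is bounded. Hence $\Gamma(x) = x\int_x^\infty p^*(\xi)\,\d\xi$ is well-defined and a second integration by parts yields the identity $h(x) = 2\bar\Gamma(x) - \Gamma(x)$ with $\bar\Gamma(x) = \tfrac{1}{x}\int_0^x \Gamma(\xi)\,\d\xi$. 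Dividing by $x$ and using $h(x)/x, \Gamma(x)/x \to 0$ gives $\bar\Gamma(x)/x \to 0$; the differential identity $\tfrac{\d}{\d x}(\bar\Gamma/x) = -h/x^2$ then integrates to $\bar\Gamma(x) = x \int_x^\infty h(\xi)/\xi^2 \, \d\xi$, which tends to $\gamma$ by l'H\^opital's rule. Consequently $\Gamma = 2\bar\Gamma - h \to \gamma$, establishing (ii) with the same constant $\gamma$.

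The main obstacle is not any individual step---each is either a citation or an elementary manipulation---but rather the bookkeeping of the parameter $\gamma$ across the four formulations: I need to ensure that the $\gamma$ extracted from the Ces\`aro limit in (i) coincides with the one in the tail limit (ii), which in turn matches the parameter labelling $\Phi_\gamma$ in (iii) and the implicit $\gamma$ recovered from $V$ in (iv). This is automatic because the computation above transports the constant explicitly, and Theorem~\ref{limit.profile.th} already identifies the three $\gamma$'s appearing in (i), (ii), and (iii) as equal.
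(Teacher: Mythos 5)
Your proposal is correct and follows essentially the same route as the paper: the same cycle \ref{equiv.i} $\Rightarrow$ \ref{equiv.ii} $\Rightarrow$ \ref{equiv.iii} $\Rightarrow$ \ref{equiv.0} $\Rightarrow$ \ref{equiv.i}, closed by citing Theorem~\ref{u.uniform.convergence} and Theorem~\ref{limit.profile.th}. The only difference is cosmetic: where the paper disposes of the last implication by referring to ``Theorem~\ref{limit.profile.th} and its proof,'' you write out explicitly the real-analytic computation ($h = 2\bar\Gamma - \Gamma$, l'H\^opital) that the paper performs at the end of that proof.
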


\begin{proof}
Statement \ref{equiv.i} implies \ref{equiv.ii} by
Theorem~\ref{u.uniform.convergence}, and \ref{equiv.ii} trivially
implies \ref{equiv.iii}.  Conversely, \ref{equiv.iii} implies
\ref{equiv.0} and \ref{equiv.0} implies \ref{equiv.i} by
Theorem~\ref{limit.profile.th} and its proof.
\end{proof}

\appendix
\section{Numerical scheme for the HHMO-model}
\label{a.scheme}

To solve the model \eqref{e.original} numerically, it is convenient to
define $w=v-\Psi$, where $v$ satisfies the HHMO-model in $\eta$-$s$
coordinates, equation \eqref{e.v}, and $\Psi$ is the self-similar
solution without precipitation from Section~\ref{s.without}.  Then $w$
solves the equation
\begin{subequations}
\begin{gather}
  \label{discr}
  s \, w_s - \eta \, w_\eta
  = 2 \,w_{\eta\eta} - 2 \, s^2 \, q[\eta,s] \, (\Psi + w) \,, \\
  w_\eta(0,s) = 0 \quad \text{for } s > 0 \,, \\
  w(\eta,s) \to 0 \quad \text{as } \eta \to \infty \text{ for } s > 0 \,.
\end{gather}
\end{subequations}

We take $N$ cells on the interval $[0,\alpha]$ of width
$\Delta \eta= \alpha/N$ and extend the domain of computation to the
right up to a total of $N_\full=6N$ grid cells.  (The factor $6$ is
empirical, but works robustly due to the rapid decay of the
concentration field.) Thus, the spatial nodes are given by
$\eta_i = i \Delta \eta$ for $i = 0, \dots, N_\full-1$.

In time, we run $M$ steps up to a total time $s=S$, so that
$\Delta s = S/M$.  Setting $s_j = j \Delta s$, we write $w_i^j$ to
denote the numerical approximation to $w(\eta_i,s_j)$, $q_i^j$ to
denote the numerical approximation to $q(\eta_i,s_j)$, and set
$\Psi_i = \Psi(\eta_i)$.  We use implicit first order timestepping, a
first order upwind finite difference for the advection term, and the
standard second order finite difference approximation for the
Laplacian, i.e.
\begin{subequations}
\begin{gather}
  \label{w.discr}
  w_{\eta\eta}(\eta_i, s_j)
  \approx \frac{w^j_{i+1}-2w^j_i+w^j_{i-1}}{\Delta\eta^2} \,, \\
  w_{\eta}(\eta_i, s_j)
  \approx \frac{w^j_{i+1}-w^j_i}{\Delta\eta} \,, \\
  w_s(\eta_i, s_j)
  \approx \frac{w^j_i-w^{j-1}_i}{\Delta s} \,.
\end{gather}
The Neumann boundary condition at $\eta=0$ is approximated by
\begin{equation}
  w_{-1}^j=w_0^j
\end{equation}
and the decay condition is approximated by the
homogeneous Dirichlet condition
\begin{equation}
  w_{6N+1}^j=0 \,.
\end{equation}
\end{subequations}
The precipitation term is treated explicitly.  Altogether, this leads
to the system of equations $A^j w^{j} = b^{j-1}$ where $A^j$ is a
tridiagonal matrix with coefficients
\begin{subequations}
\begin{gather}
  a_{i,i}^j = j + i + 4/\Delta \eta^2
  \qquad \text{for } i = 0, \dots, N_\full -1 \,, \\
  a_{i,i-1}^j = - 2/\Delta \eta^2
  \qquad \text{for } i = 1, \dots, N_\full - 1 \,, \\
  a_{i,i+1}^j = - i - 2/\Delta \eta^2
  \qquad \text{for } i = 1, \dots, N_\full - 2 \,, \\
  a_{0,1}^j = -4/\Delta \eta^2  \,,
\end{gather}
and $b^{j-1}$ is a vector with coefficients
\begin{gather}
  b_i^{j-1}
  = j \, w_i^{j-1}
    - 2 \, j^2 \, \Delta s^2 \, q_i^{j-1} \, (\Psi_i + w_i^{j-1})
  \qquad \text{for } i = 0, \dots, N_\full -1 \,.
\end{gather}
\end{subequations}

It remains to determine an expression for the $q_i^j$.  Note that once
$q(\eta_0,s_0)$ equals one at some point $(\eta_0,s_0)$, it will
remain one along the characteristic curve $\eta s = \eta_0 s_0$ for
all $s \geq s_0$.  This gives rise to the following simple consistent
transport scheme.

We first consider spatial indices $i \geq N$.  In this region, we
observe that whenever $u_i^j > u^*$, the maximum principle for the
continuum problem implies that $u$ exceeds the precipitation threshold
on some curve contained in the region $\{s \leq s_j\}$ which connects
the point $(\eta_i,s_j)$ with the line $\eta=\alpha$.  This implies
that $q_k^j = 1$ for all $N \leq k \leq i$.  Consequently, we only
need to track the largest index $I^j$ where precipitation takes place
and set $q_k^j=1$ for $k = N, \dots, I^j$.  To do so, observe that
precipitation takes place either when $u$ exceeds the threshold, or
when a cell lies on a characteristic curve where precipitation has
taken place at the previous time step.  This leads to the the
expression
\begin{equation}
  I^j
  = \max \bigl\{
           \max \{ k \colon u_k^j > u^* \},
           \lfloor I^{j-1} \, (j-1)/j \rfloor
         \bigr\} \,.
\end{equation}

Second, for spatial indices $i<N$ corresponding to $\eta < \alpha$, we
only need to transport the values of the precipitation function along
the characteristic curves.  We note that the characteristic curves
define a map from the temporal interval $[0,s]$ at $\eta=\alpha$ to
the spatial interval $[0,\alpha]$ at time $s=j \Delta s$.  This map
scales each grid cell by a factor $N/j$.  We distinguish two
sub-cases.  For fixed time index $j \leq N$, a temporal cell is mapped
onto at least one full spatial cell.  Thus, we can use a simple
backward lookup as follows.  Let
\begin{equation}
  \Jc(i;j) = \lfloor i \tfrac{j}N \rfloor
  \label{e.ji}
\end{equation}
be the time index in the past that corresponds best to spatial index
$i$.  Then we set
\begin{equation}
  q_i^j = q_N^{\Jc(i;j)} \,.
\end{equation}

For a fixed time index $j>N$, we do a forward mapping, i.e., we define
the inverse function to \eqref{e.ji},
\begin{equation}
  \Ic(k;j) = \lceil k \tfrac{N}j \rceil \,,
\label{e.ij}
\end{equation}
which represents the spatial index that the cell with past time index
$k$ and spatial index $N$ has moved to, and set
\begin{equation}
  \label{f.ij}
  q_i^j = \frac{N}j \sum_{\Ic(k;j)=i} q_N^k \,.
\end{equation}
Note that this expression can yield values for $q_i^j$ outside of the
unit interval, which is not a problem as the integral over the entire
interval is represented correctly.  To implement this efficiently in
code, we keep a running sum
\begin{gather}
  Q_j = \sum_{k = 0}^j q_N^k
\end{gather}
which can be updated incrementally and write
\begin{gather}
  q_i^j = \frac{N}j \, (Q_{\Jc(i+1;j)} - Q_{\Jc(i;j)}) \,.
\end{gather}
This expression is equivalent to \eqref{f.ij}.

\section{Weak formulation in similarity coordinates}
\label{a.weak}

In the following, we provide the details of changing to the weak
formulation in similarity coordinates.  By formal computation, for an
arbitrary function $h$,
\begin{subequations}
\label{eta-s.trans}
\begin{gather}
   h_t = -\frac12 \, h_\eta \, \frac x{t^{3/2}}
         +\frac12 \, h_s \, \frac 1{\sqrt{t}}
       = -\frac\eta{2s^2} \, h_\eta + \frac1{2s} \, h_s \,, \\
   h_x = h_\eta \, \frac1{\sqrt t} = \frac1s \, h_\eta \,,
\end{gather}
and the Jacobian of the change of variables reads
\begin{equation}
  \frac{\partial(x,t)}{\partial(\eta,s)}=
	\begin{vmatrix}
	   s & \eta \\
	   0 & 2s\\
	\end{vmatrix}
	= 2s^2 \,.
\end{equation}
\end{subequations}
Now, take the weak formulation of the HHMO-model
\eqref{weak.sol.def.eq}, and replace the partial derivatives
$\varphi_t$, $\varphi_x$, and $(u-\psi)_x$ in terms of $\varphi_s$,
$\varphi_\eta$, and $(v-\Psi)_{\eta}$ according to
\eqref{eta-s.trans}.  This yields
\begin{equation}
  \label{weak.sol.def.eq.1}
  \int_0^{\sqrt T} \int_\R (s \, \varphi_s - \eta \, \varphi_\eta) \,
    (v-\Psi) \, \d \eta \, \d s
  = 2 \int_0^{\sqrt T} \int_\R
    \bigl( 
      (v-\Psi)_{\eta} \, \varphi_{\eta}  +s^2 \, q \, v \, \varphi
    \bigr) \, \d \eta \, \d s \,.
\end{equation}
We can extend the class of admissible test function to product test
functions of the form
\begin{equation}
  \varphi(\eta,s) = f(\eta) \, \chi(s) 
\end{equation}
where $f\in H^1(\R)$ with compact support and $\chi\in H^1(\R)$ with
compact support in $(0,\infty)$ by density.  Inserting $\varphi$ into
\eqref{weak.sol.def.eq.1} and setting $w = v-\Psi$, we obtain
\begin{equation}
  \int_0^{\sqrt T} \int_\R (s \, f\chi_s - \eta \, f_\eta\,\chi) \,
    w \, \d \eta \, \d s
  = 2 \int_0^{\sqrt T} \int_\R
      \bigl(
        w_\eta \, f_\eta \, \chi + s^2 \, q \, v \, f\, \chi
      \bigr) \, \d \eta \, \d s \,.
 \label{e.weak2}
\end{equation}
Fix $0<S_0<S<\sqrt T$ and let $\chi\in H^1(\R_+)$ be the test function
with derivative
\begin{equation}
  \chi_s(s)
  = \begin{dcases}
      \eps^{-1} & \text{for } s \in[-\eps+S_0,S_0] \,, \\
      - \eps^{-1} & \text{for } s \in[S,S+\eps] \,, \\
      0 & \text{ otherwise} \,.
    \end{dcases}
\end{equation}
Finally, insert this expression into \eqref{e.weak2} and let
$\eps\searrow0$.  This implies that $w$ is a weak solution to the
HHMO-model in similarity variables if
\begin{align}
  S_0 \int_\R w(\eta, S_0) \, f(\eta) \, \d\eta
  & - S \int_\R w(\eta, S) \, f(\eta) \, \d\eta
    - \int_{S_0}^S\int_\R \eta \, f_\eta \, w \, \d \eta \, \d s \notag \\
  & = 2 \int_{S_0}^S\int_\R
      \bigl(
        w_\eta \, f_\eta + s^2 \, q \, v \, f
      \bigr) \, \d \eta \, \d s 
   \label{weak.equation.w.eta-s}
\end{align}
for all $0<S_0<S<\sqrt T$ and $f \in H^1(\R)$ with compact support. 

\section*{Acknowledgments}

We thank Danielle Hilhorst and Arndt Scheel for interesting
discussions, and an anonymous referee for their careful attention to
detail and helpful comments.  This work was funded through German
Research Foundation (DFG) grant OL 155/5-1.  Additional funding was
received via the Collaborative Research Center TRR 181 ``Energy
Transfers in Atmosphere and Ocean'', also funded by the DFG under
project number 274762653.

\bibliographystyle{siam}
\bibliography{liesegang}

\end{document}